\DeclareMathOperator*{\argmin}{arg\,min}
\newcommand{\wt}[1]{\widetilde{#1}}
\def\l{\left}
\def\r{\right}
\def\mR{\mathbb{R}}
\def\mH{\mathcal{H}}
\def\mP{\mathcal{P}}
\def\mE{\mathcal{E}}
\def\mD{\mathcal{D}}
\def\mL{\mathfrak{L}}
\def\Hold{H\"{o}lder }
\def\tceil{\lceil t \rceil}
\def\ve{\varepsilon}
\newcommand{\diff}{\textup{d}}
\newcommand{\bW}{\mathbf{W}}
\newcommand{\bV}{\mathbf{V}}
\newcommand{\bK}{\mathbf{K}}
\newcommand{\bF}{\mathbf{F}}
\newcommand{\bU}{\mathbf{U}}
\newcommand{\mO}{\mathcal{O}}
\newcommand{\mV}{\mathcal{V}}
\newcommand{\fa}{\mathfrak{a}}
\newcommand{\fA}{\mathfrak{A}}
\newcommand{\fS}{\mathfrak{S}}
\newtheorem{Theorem}{Theorem}[section]
\newtheorem{Lemma}[Theorem]{Lemma}
\newtheorem{Proposition}[Theorem]{Proposition}
\newtheorem{Remark}[Theorem]{Remark}
\newtheorem{Definition}[Theorem]{Definition}
\numberwithin{equation}{section}
\begin{document}

\title[Time fractional gradient flow]{Time fractional gradient flows: Theory and numerics}

\author[W.~Li]{Wenbo Li}
\email[W.~Li]{wli50@utk.edu}
\author[A.J.~Salgado]{Abner J.~Salgado}
\email[A.J.~Salgado]{asalgad1@utk.edu}

\address{Department of Mathematics, University of Tennessee, Knoxville TN 37996 USA.}

\date{Draft version of \today}

\makeatletter
\@namedef{subjclassname@2020}{%
	\textup{2020} Mathematics Subject Classification}
\makeatother

\subjclass[2020]{
34G20,                    
35R11,                    
65J08,                    
65M06,                    
65M15,                    
65M50                     
}

\keywords{Caputo derivative, gradient flows, a posteriori error estimate, variable time stepping}


\begin{abstract}
We develop the theory of fractional gradient flows: an evolution aimed at the minimization of a convex, l.s.c.~energy, with memory effects. This memory is characterized by the fact that the negative of the (sub)gradient of the energy equals the so-called Caputo derivative of the state. We introduce the notion of energy solutions, for which we provide existence, uniqueness and certain regularizing effects. We also consider Lipschitz perturbations of this energy. For these problems we provide an a posteriori error estimate and show its reliability. This estimate depends only on the problem data, and imposes no constraints between consecutive time-steps. On the basis of this estimate we provide an a priori error analysis that makes no assumptions on the smoothness of the solution.
\end{abstract}

\maketitle

\section{Introduction} \label{sec:intro}

In recent times problems involving fractional derivatives have garnered considerable attention, as it is claimed that they better describe certain fundamental relations between the processes of interest; see, for instance \cite{MR2785462,MR2249732,MR3191030}. In this, and many other references the models considered are linear. However, it is well known that real world phenomena are not linear, not even smooth. It is only natural then to consider nonlinear/nonsmooth models with fractional derivatives.

The purpose of this work is to develop the theory and numerical analysis of so-called \emph{time-fractional} gradient flows: an evolution equation aimed at the minimization of a convex and lower semicontinuous (l.s.c.) energy, but where the evolution has memory effects. This memory is characterized by the fact that the negative of the (sub)gradient of the energy equals the so-called Caputo derivative of the state.

The Caputo derivative, introduced in \cite{GJI:GJI529}, is one of the existing models of fractional derivatives. It is defined, for $\alpha \in (0,1)$, by
\begin{equation}
\label{eq:defofCaputo}
  D_c^\alpha w(t) = \frac1{\Gamma(1-\alpha)} \int_0^t \frac{\dot w(r)}{(t-r)^\alpha} \diff r,
\end{equation}
where $\Gamma$ denotes the Gamma function. This definition, from the onset, seems unnatural. To define a derivative of a fractional order, it seems necessary for the function to be at least differentiable. Below we briefly describe several attempts at circumventing this issue. We focus, in particular, on the results developed in a series of papers by Li and Liu, see \cite{LiLiuGeneralized-18,LiLiuGradientFlow-19,LiLiuNoteDeconvo-18,LiLiuCompact-18}, where they developed a distributional theory for this derivative; see also \cite{2007.10244}. The authors of these works also constructed, in \cite{LiLiuNoteDeconvo-18}, so-called deconvolution schemes that aim at discretizing this derivative. With the help of this definition and the schemes that they develop the authors were able to study several classes of equations, in particular time fractional gradient flows.

Let us be precise in what we mean by this term. Let $T>0$ be a final time, $\mH$ be a separable Hilbert space, $\Phi : \mH \to \mR \cup \{+\infty\}$ be a convex and l.s.c.~functional, which we will call \emph{energy}. Given $u_0 \in \mH$, and $f:(0,T] \to \mH$ we seek for a function $u:[0,T] \to \mH$ that satisfies
\begin{equation}
  \begin{dcases}
    D_c^\alpha u(t) + \partial\Phi(u(t)) \ni f(t), & t\in (0,T], \\
    u(0) = u_0,
  \end{dcases}
\label{eq:theGradFlow}
\end{equation}
where by $\partial \Phi$ we denote the subdifferential of $\Phi$. Our objectives in this work can be stated as follows: We will introduce the notion of ``energy solutions'' of \eqref{eq:theGradFlow}, and we will refine the results regarding existence, uniqueness, and regularizing effects provided in \cite{LiLiuGradientFlow-19}. This will be done by generalizing, to non-uniform time steps the ``deconvolution'' schemes of \cite{LiLiuNoteDeconvo-18,LiLiuGradientFlow-19}, and developing a sort of ``fractional minimizing movements'' scheme. We will also provide an a priori error estimate that seems optimal in light of the regularizing effects proved above. We also develop an a posteriori error estimate, in the spirit of \cite{MR1737503} and show its reliability.

We comment, in passing, that nonlinear evolution problems with fractional time derivative have been considered in other works. From a modeling point of view, their advantages have been observed in \cite{MR2249732,MR3164777}. Some other types of nonlinear problems have been studied in \cite{MR3335453,MR1744336,MR2824708,MR3982325,MR4087352,MR2390082,MR3965393} and \cite{2007.14855,MR4036095} where, for a particular type of nonlinear problem other ``energy dissipation inequalities'' than those we obtain are derived. Regularity properties for nonlinear problems with fractional time derivatives have been obtained in \cite{MR3563229,MR4080675,MR4037586,MR3912710,MR3184573,MR3038123,MR3000457,MR2355009}. Of particular interest to us are \cite{LiLiuGradientFlow-19} which we described above and \cite{MR4040846} which also considers time fractional gradient flows. The assumptions on the data, however, are slightly different than ours. As such, some of the results in \cite{MR4040846} are stronger, and some weaker than ours; in particular, we conduct a numerical analysis of this problem. Nevertheless, we refer to this reference for a nice historical account and particular applications to PDEs.

Our presentation will be organized as follows. We will establish notation and the framework we will adopt in \Cref{sec:Notation}. Here, in particular, we will study several properties of a particular space, which we denote by $L^p_\alpha(0,T;\mH)$, and that will be used to characterize the requirements on the right hand side $f$ of \eqref{eq:theGradFlow}. In addition, we also review the various proposed generalizations of the classical definition of Caputo derivatives, with particular attention to that of \cite{LiLiuGeneralized-18,LiLiuGradientFlow-19,LiLiuCompact-18}; since this is the one we shall adopt. In \Cref{sec:deconvo} we generalize the deconvolution schemes of \cite{LiLiuNoteDeconvo-18,LiLiuGradientFlow-19} and their properties, to the case of nonuniform time stepping. Many of the simple properties of these schemes are lost in this case, but we retain enough of them for our purposes. \Cref{sec:theory} introduces the notion of energy solutions for \eqref{eq:theGradFlow} and shows existence and uniqueness of these. This is accomplished by introducing, on the basis of our generalized deconvolution formulas, a fractional minimizing movements scheme; and showing that the discrete solutions have enough compactness to pass to the limit in the size of the partition. In \Cref{sec:numerics} we provide an error analysis of the fractional minimizing movements scheme. First, we show how an error estimate follows as a side result from the existence proof. Then, in the spirit of \cite{MR1737503}, we provide an a posteriori error estimator for our scheme and show its reliability. This estimator is then used to independently show rates of convergence. This section is concluded with some particular instances in which the rate of convergence can be improved. \Cref{sec:LipschitzPerturbation} is dedicated to the case in which we allow a Lipschitz perturbation of the subdifferential. We extend the existence, uniqueness, a priori, and a posteriori approximation results of the fractional gradient flow. Finally, \Cref{sec:experiments} presents some simple numerical experiments that illustrate, explore, and expand our theory.

\section{Notation and preliminaries}
\label{sec:Notation}

Let us begin by presenting the main notation and assumptions we shall operate under. We will denote by $T \in (0,\infty)$ our final (positive) time. By $\mH$ we will always denote a separable Hilbert space with scalar product $\langle \cdot, \cdot \rangle$ and norm $\| \cdot \|$. As it is by now customary, by $C$ we will denote a nonessential constant whose value may change at each occurrence.

\subsection{Convex energies}
The \emph{energy} will be a convex, l.s.c., functional $\Phi : \mH \to \mR \cup \{+\infty\}$ with nonempty effective domain of definition, that is
\[
  D(\Phi) = \{ w \in \mH: \Phi(w)<+\infty \} \neq \emptyset.
\]
We will always assume that our energy is bounded from below, that is
\[
  \Phi_{\inf} = \inf_{u \in \mH} \Phi(u) > -\infty.  
\]
As we are not assuming smoothness in our energy beyond convexity, a useful substitute for its derivative is the subdifferential, that is, 
\[
  \partial \Phi(w) = \left\{ \xi \in \mH: \langle \xi, v-w \rangle \leq \Phi(v) -\Phi(w) \quad \forall v \in \mH\right\}.
\]
The effective domain of the subdifferential is
$
  D(\partial \Phi) = \l\{ w \in \mH: \partial\Phi(w) \neq \emptyset \r\}.
$
Recall that, in our setting, we always have that $\overline{D(\partial\Phi)} = \overline{D(\Phi)}$. We refer the reader to \cite{MR1058436,MR2330778} for basic facts on convex analysis.

In applications, it is sometimes useful to obtain error estimates on (semi)norms stronger than those of the ambient space, and that are dictated by the structure of the energy. For this reason, we introduce the following \emph{coercivity modulus} of $\Phi$, see \cite[Definition 2.3]{MR1737503}.

\begin{Definition}[coercivity modulus]
\label{def:sigma-rho}
For every $w_1 \in D(\Phi)$ and $w_2 \in D(\partial \Phi)$, let $\sigma(w_1; w_2) \ge 0$ be
\[
  \sigma(w_1; w_2) = \Phi(w_2) - \Phi(w_1) - \sup_{\xi \in \partial \Phi(w_1)} \langle \xi, w_2 - w_1 \rangle.
\]
Then for every $w_1, w_2 \in D(\partial \Phi)$ we define
\[
  \rho(w_1, w_2) = \sigma(w_1; w_2) + \sigma(w_2; w_1) = \inf_{\xi_1 \in \partial \Phi(w_1), \xi_2 \in \partial \Phi(w_2)} \langle \xi_1 - \xi_2, w_1 - w_2 \rangle.
\]
\end{Definition}

We comment that, by the definition, $\rho(\cdot, \cdot)$ is symmetric, whereas $\sigma(\cdot; \cdot)$ might not be. Furthermore, the separability of $\mH$ guarantees that $\sigma$ and $\rho$ are both Borel measurable \cite[Remark 2.4]{MR1737503}. One may also refer to \cite[Section 2.3]{MR1737503} for discussions and properties of $\sigma$ and $\rho$ for certain choices of $\Phi$. Definition~\ref{def:sigma-rho} enables us to write
\begin{equation}\label{eq:def-subdiff-strong}
    \xi \in \partial \Phi(w) \iff 
\langle \xi, v - w \rangle + \sigma(w; v) \leq \Phi(v) - \Phi(w), \quad \forall v \in \mH.
\end{equation}

\subsection{Vector valued time dependent functions}
We will follow standard notation regarding Bochner spaces of vector valued functions, see \cite[Section 1.5]{MR3014456}. For any $w \in L^1(0,T;\mH)$ and $E \subset [0,T]$ that is measurable, we define the average by
\[
  \fint_E w(t) \diff t = \frac1{|E|} \int_E w(t) \diff t,
\]
where $|E|$ denotes the Lebesgue measure of $E$.

Since eventually we will have to deal with time discretization, we also introduce notation for time-discrete vector valued functions. Let $\mP$ be a partition of the time interval $[0,T]$
\begin{equation}\label{eq:def-partition}
  \mP = \{0 = t_0 < t_1 < \ldots < t_{N-1} < t_N = T\},
\end{equation}
with variable steps $\tau_n = t_n - t_{n-1}$ and $\tau = \max \{ \tau_n: n \in \{1,\ldots, N\} \}$. We will always denote by $N$ the size of a partition. For $t \in [0,T]$ we define
\[
  \lfloor t \rfloor_\mP = \max\{ r \in \mP : r < t \}, \quad \lceil t \rceil_\mP = \min\{ r \in \mP: t \leq r \},
\]
and $n(t)$ to be the index of $\tceil_\mP$, so that $t \in (\lfloor t \rfloor_\mP, \tceil_\mP] = (t_{n(t)-1},t_{n(t)}]$. Given a partition $\mP$, for $\bW = \{W_i\}_{i=1}^N \subset \mH^N$ we define its \emph{piecewise constant} interpolant with respect to $\mP$ to be the function $\overline{W}_\mP \in L^\infty(0,T;\mH)$ defined by
\begin{equation}
\label{eq:def-pw-const}
  \overline{W}_\mP(t) = W_{n(t)}.
\end{equation}

\subsubsection{The space $L^p_\alpha(0,T;\mH)$}
To quantify the assumptions we need on the right hand side $f$ of \eqref{eq:theGradFlow} we introduce the following space. 

\begin{Definition}[space $L^p_{\alpha}(0, T; \mH)$]
\label{def:spaceLpalpha}
Let $p \in [1,\infty)$ and $\alpha \in (0,1)$. We say that the function $w :[0,T] \to \mH$ belongs to the space $L^p_{\alpha}(0, T; \mH)$ iff 
\begin{equation}\label{eq:def-Lpalpha}
  \Vert w \Vert_{L^p_{\alpha}(0, T; \mH)} = \sup_{t \in [0,T] } \l( \int_0^t (t-s)^{\alpha-1} \Vert w(s) \Vert^p \diff s \r)^{1/p} < \infty.
\end{equation}
\end{Definition}

Let us show some basic embedding results about this space. 

\begin{Proposition}[embedding]\label{proposition:embeddingLpalpha}
Let $p \in [1,\infty)$, $\alpha \in (0,1)$, and $q > p/\alpha$. Then we have that 
\[
  L^q(0, T; \mH) \hookrightarrow L^p_{\alpha}(0, T; \mH) \hookrightarrow L^p(0,T;\mH).
\]
\end{Proposition}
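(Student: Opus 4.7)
The plan is to handle the two embeddings independently, treating the right one first because it is more elementary, and using H\"older's inequality for the left one.

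For the right embedding $L^p_{\alpha}(0,T;\mH) \hookrightarrow L^p(0,T;\mH)$, I would exploit the monotonicity of $x\mapsto x^{\alpha-1}$ on $(0,\infty)$: since $\alpha-1<0$ and $t-s \leq t$, we have $(t-s)^{\alpha-1}\geq t^{\alpha-1}$ for every $t\in[0,T]$ and $s\in[0,t]$. Multiplying by $\|w(s)\|^p$ and integrating then gives
\[
  t^{\alpha-1}\int_0^t \|w(s)\|^p \diff s \leq \int_0^t (t-s)^{\alpha-1}\|w(s)\|^p \diff s \leq \|w\|_{L^p_{\alpha}(0,T;\mH)}^p.
\]
Taking $t=T$ and rearranging yields $\|w\|_{L^p(0,T;\mH)}\leq T^{(1-\alpha)/p}\|w\|_{L^p_{\alpha}(0,T;\mH)}$, which is the desired bound.

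For the left embedding $L^q(0,T;\mH)\hookrightarrow L^p_{\alpha}(0,T;\mH)$, I would apply H\"older's inequality with conjugate exponents $q/p$ and $q/(q-p)$ (which is legitimate because $\alpha<1$ implies $q>p/\alpha>p$, so $q/p>1$). For any $w\in L^q(0,T;\mH)$ and any $t\in[0,T]$,
\[
  \int_0^t (t-s)^{\alpha-1}\|w(s)\|^p \diff s \leq \l(\int_0^t (t-s)^{(\alpha-1)q/(q-p)} \diff s\r)^{(q-p)/q} \l(\int_0^t \|w(s)\|^q \diff s\r)^{p/q}.
\]
The precise role of the hypothesis $q>p/\alpha$ is to guarantee $(\alpha-1)q/(q-p)>-1$, so that the first factor converges. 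An explicit evaluation reveals that factor equals a constant times $t^{(\alpha q-p)/q}\leq T^{(\alpha q-p)/q}$, so after taking $p$-th roots and supremum in $t$ one obtains $\|w\|_{L^p_{\alpha}(0,T;\mH)}\leq C\|w\|_{L^q(0,T;\mH)}$ for a constant $C=C(T,p,q,\alpha)$.

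Neither step poses a real obstacle; the only item that requires attention is verifying that the integrability threshold $(\alpha-1)q/(q-p)>-1$ is \emph{exactly} the stated hypothesis $q>p/\alpha$, which is a one-line algebraic manipulation. The proof therefore reduces to two direct estimates, with the structural content being the monotonicity argument for one direction and the Hölder duality for the other.
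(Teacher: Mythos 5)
Your proposal is correct and follows essentially the same route as the paper: the second embedding via the pointwise bound $(t-s)^{\alpha-1}\ge t^{\alpha-1}$ (the paper phrases it as extracting $\sup_{s}(t-s)^{1-\alpha}\le T^{1-\alpha}$), and the first via H\"older with exponents $q/p$ and $q/(q-p)$, where $q>p/\alpha$ is precisely the integrability condition for $(t-s)^{(\alpha-1)q/(q-p)}$. No gaps.
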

\begin{proof}
The second embedding is immediate. For any $t \in (0,T]$
\[
  \int_0^t \|  w(s) \|^p \diff s \leq \sup_{s\in [0,t]} (t-s)^{1-\alpha} \int_0^t (t-s)^{\alpha-1} \| w(s) \|^p \diff s \leq T^{1-\alpha} \| w \|_{L^p_\alpha(0,T;\mH)}^p,
\]
where we used that $1-\alpha > 0$.

The proof of the first embedding is a simple application of \Hold inequality. Indeed, we have
\[
  \l( \int_0^t (t-s)^{\alpha-1} \Vert w(s) \Vert^p \diff s\r)^{1/p} \le
  \l( \frac{q-p}{q\alpha - p} \r)^{(q-p)/q} t^{\alpha - p/q} \Vert w \Vert_{L^q(0, t; \mH)},
\]
and hence
\begin{equation}\label{eq:Lq-embed-Lpalpha}
  \Vert w \Vert_{L^p_{\alpha}(0, T; \mH)} \le
  \l( \frac{q-p}{q\alpha - p} \r)^{(q-p)/q} T^{\alpha - p/q} \Vert w \Vert_{L^q(0, T; \mH)},
\end{equation}
as we intended to show.
\end{proof}

When dealing with discretization we will approximate the right hand side $f$ of \eqref{eq:theGradFlow} by its local averages over a partition $\mP$. Thus, we must provide a bound on this operation that is independent of the partition.


\begin{Lemma}[continuity of averaging]\label{lemma:pw-interp-L2alpha}
Let $p \in [1,\infty)$, $\alpha \in (0,1)$, $f \in L^p_\alpha(0,T;\mH)$, and $\mP$ be a partition of $[0,T]$ as in \eqref{eq:def-partition}. Define $\bF = \{ \fint_{t_{n-1}}^{t_n} f(t) \diff t\}_{n=1}^N \subset \mH^N$ and let $\overline{F}_\mP$ be defined as in \eqref{eq:def-pw-const}. Then, there exists a constant $C > 0$ only depending on $p$ and $\alpha$ such that
\[
  \Vert \overline{F}_\mP \Vert_{L^p_{\alpha}(0, T; \mH)} \leq C \Vert f \Vert_{L^p_{\alpha}(0, T; \mH)}.
\]
\end{Lemma}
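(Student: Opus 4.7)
The plan is to first invoke Bochner--Jensen (valid since $p\ge 1$): pointwise in $s$ one has $\|\overline{F}_\mP(s)\|^p \le \fint_{I_{n(s)}}\|f(u)\|^p\,\diff u$, where $I_n = (t_{n-1},t_n]$. Setting $\phi = \|f\|^p \ge 0$ and $A_\mP\phi(s) = \fint_{I_{n(s)}}\phi$, this reduces the lemma to the scalar statement
\[
\sup_{t\in[0,T]}\int_0^t(t-s)^{\alpha-1}A_\mP\phi(s)\,\diff s \le C(\alpha)\sup_{t\in[0,T]}\int_0^t(t-s)^{\alpha-1}\phi(s)\,\diff s,
\]
after which extracting $p$-th roots yields the desired bound with constant $C(\alpha)^{1/p}$.

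I would then fix $t\in(0,T]$ with $m=n(t)$ and expand the left-hand side as $\sum_{n=1}^m a_n(t)\mu_n$, where $a_n(t)=\int_{I_n\cap[0,t]}(t-s)^{\alpha-1}\,\diff s$ and $\mu_n=\fint_{I_n}\phi$. The two ``local'' contributions from $I_m$ and $I_{m-1}$ are handled by combining the elementary bound $a_n(t)\le\tau_n^\alpha/\alpha$ with the weight estimate
\[
\int_{I_n}\phi \;\le\; \tau_n^{1-\alpha}\int_{I_n}(t_n-r)^{\alpha-1}\phi(r)\,\diff r \;\le\; \tau_n^{1-\alpha}\,\|f\|_{L^p_\alpha(0,T;\mH)}^p,
\]
whose product is $\|f\|_{L^p_\alpha}^p/\alpha$ regardless of $\mP$.

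For the remaining far-field sum $\sum_{n\le m-2}a_n(t)\mu_n$, the plan is to use the mean-value identity $a_n(t)/\tau_n = (t-\xi_n)^{\alpha-1}$ for some $\xi_n\in I_n$ together with the observation that $t-t_n\ge \tau_{n+1}>0$ when $n\le m-2$. On ``good'' intervals where $\tau_n\le t-t_n$ the weight $(t-s)^{\alpha-1}$ varies across $I_n$ by a factor of at most $2^{1-\alpha}$, which yields the pointwise comparison $a_n(t)/\tau_n \le 2^{1-\alpha}(t-s)^{\alpha-1}$ for $s\in I_n$ and hence, after summing over good $n$, a contribution bounded by $2^{1-\alpha}\|f\|_{L^p_\alpha}^p$. ``Bad'' intervals (those with $\tau_n>t-t_n$, which forces $\tau_n>\tau_{n+1}$) are absorbed using the same two-sided estimate as in the local step; a dyadic grouping of such intervals by the size of $\tau_n/(t-t_n)$, together with the supremum present in the $L^p_\alpha$-norm (so that $J_\alpha\phi$ may be tested at several reference times $t_n$), keeps the resulting constant partition-independent.

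Combining the three pieces, taking a supremum over $t$, and extracting $p$-th roots produces the lemma with $C=C(\alpha,p)$. I expect the main obstacle to be exactly the far-field estimate for the bad intervals: a na\"ive pointwise kernel comparison degrades when $\tau_n/(t-t_n)$ is large, and the dyadic bookkeeping must be carried out with enough care that the sum of contributions from possibly many bad intervals is absorbed into a single constant multiple of $\sup_\tau J_\alpha\phi(\tau)$, rather than accumulating a factor proportional to the number of partition points.
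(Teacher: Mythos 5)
Your reduction via the unweighted Jensen inequality and the treatment of the last interval and of the ``good'' far-field intervals are all fine, but the step you yourself flag --- the far-field ``bad'' intervals with $\tau_n > t-t_n$ --- is a genuine gap as written. The mechanism you propose for them (``the same two-sided estimate as in the local step'') gives a bound of the form $a_n(t)\mu_n \le C_\alpha \Vert f\Vert_{L^p_\alpha}^p$ \emph{per interval}, and there is no a priori bound on the number of bad intervals: for a partition refining geometrically toward $t$ (say $t_n = T - 3^{-n}$, $t=T$) \emph{every} interval is bad, so the per-interval bounds accumulate to $O(N)\Vert f\Vert^p$. The dyadic regrouping and the idea of testing the supremum at several reference times are only asserted, not carried out, and it is not clear they close this hole. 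So the proposal does not yet constitute a proof.

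The gap is, however, fillable by an elementary observation that makes the bad intervals no harder than the good ones. If $\tau_n > t-t_n$ and $n\le m-1$, then $t-t_{n-1}=\tau_n+(t-t_n)<2\tau_n$, whence
\[
\frac{a_n(t)}{\tau_n}\le \frac{(t-t_{n-1})^\alpha}{\alpha\,\tau_n}< \frac{2}{\alpha}\,(t-t_{n-1})^{\alpha-1}\le \frac{2}{\alpha}\,(t-s)^{\alpha-1},\qquad s\in I_n,
\]
so $a_n(t)\mu_n\le \frac{2}{\alpha}\int_{I_n}(t-s)^{\alpha-1}\phi(s)\,\diff s$, exactly the same summable bound you obtain for the good intervals (with $2/\alpha$ in place of $2^{1-\alpha}$). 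Summing all $n\le m-1$ then gives $\frac{2}{\alpha}\Vert f\Vert_{L^p_\alpha}^p$ with no dyadic bookkeeping at all. With this repair your argument is correct and is genuinely different from the paper's: the paper applies a \emph{weighted} H\"older inequality inside the average, with weight $(t_n-s)^{\alpha-1}$, and invokes the Muckenhoupt $A_p$ property of $s\mapsto s^{\alpha-1}$ to get a uniform constant for the product of the two averages; that single inequality handles every interval at once and makes the good/bad dichotomy unnecessary. Your route is more hands-on and arguably more elementary (no $A_p$ theory), at the price of a case analysis; the paper's route is shorter and generalizes immediately to other $A_p$ weights.
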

\begin{proof}
Let $p \in (1,\infty)$. We first, for $n \in \{1, \ldots, N\}$, bound the integral
\[
\int_0^{t_n} (t_n-s)^{\alpha - 1} \Vert \overline{F}_\mP(s) \Vert^p \diff s.
\]
To achieve this, we decompose this integral as
\begin{equation}\label{eq:L2alpha-proof1}
  \begin{aligned}
  \int_0^{t_n} (t_n-s)^{\alpha - 1} \Vert \overline{F}_\mP(s) \Vert^p \diff s  &= \sum_{k=1}^{n} \int_{t_{k-1}}^{t_k} (t_n-s)^{\alpha - 1} \Vert \overline{F}_\mP(s) \Vert^p \diff s \\
  &= \sum_{k=1}^{n} \Vert F_k \Vert^p \int_{t_{k-1}}^{t_k} (t_n-s)^{\alpha - 1} \diff s.
  \end{aligned}
\end{equation}
We use H\"older inequality in the definition of $F_k$ to obtain that
\begin{equation}\label{eq:L2alpha-proof2}
  \Vert F_k \Vert^p = \l \Vert \fint_{t_{k-1}}^{t_k} f(s) \diff s \r \Vert^p
  \leq  \fint_{t_{k-1}}^{t_k} (t_n-s)^{\alpha-1} \Vert f(s) \Vert^p \diff s  \l(\fint_{t_{k-1}}^{t_k} (t_n-s)^{\frac{1-\alpha}{p-1}} \diff s \r)^{p-1}.
\end{equation}
Since, for every $p \in (1,\infty)$ the function $s \mapsto s^{\alpha-1}$ belongs to the Muckenhoupt class $A_p(\mR_+)$, see \cite[Example 7.1.7]{GrafakosClassical-14}, there exists a constant $C_{p,\alpha}$ that only depends on $p$ and $\alpha$ such that
\[
  \fint_a^b s^{\alpha-1} \diff s \l( \fint_a^b s^{\frac{1-\alpha}{p-1}} \diff s \r)^{p-1}  \le C_{p,\alpha}, \quad \forall 0 \le a < b.
\]
Therefore, for any $k$, we have
\begin{equation}\label{eq:L2alpha-proof3}
\begin{aligned}
  \fint_{t_{k-1}}^{t_k} (t_n-s)^{\alpha-1} \diff s \l[  \fint_{t_{k-1}}^{t_k} (t_n-s)^{\frac{1-\alpha}{p-1}} \diff s \r]^{p-1}
= \fint_{t_n-t_{k-1}}^{t_n-t_k}  s^{\alpha-1} \diff s \l[ \fint_{t_n-t_{k-1}}^{t_n-t_k}  s^{\frac{1-\alpha}{p-1}} \diff s \r]^{p-1} \le C_{p,\alpha}.
\end{aligned}
\end{equation}
Substituting \eqref{eq:L2alpha-proof2} and \eqref{eq:L2alpha-proof3} into  \eqref{eq:L2alpha-proof1} we get
\[
\begin{aligned}
\int_0^{t_n} (t_n-s)^{\alpha-1} \Vert \overline{F}_\mP(s) \Vert^p \diff s 
&\le \sum_{k=1}^{n}
C_{p,\alpha} \int_{t_{k-1}}^{t_k} (t_n-s)^{\alpha-1} \Vert f(s) \Vert^p \diff s \\
&= C_{p,\alpha} \int_0^{t_n} (t_n-s)^{\alpha-1} \Vert f(s) \Vert^p \diff s \le C_{p,\alpha} \Vert f \Vert^p_{L^p_{\alpha}(0, T; \mH)}.
\end{aligned}
\]

Now consider $t \in [0,T]$. Taking advantage of the estimate we obtained above we write
\begin{equation}\label{eq:L2alpha-proof4}
\begin{aligned}
  \int_0^t (t-s)^{\alpha-1} \Vert \overline{F}_\mP(s) \Vert^p \diff s & = \int_0^{\lfloor t\rfloor_{\mP}} (t-s)^{\alpha-1} \Vert \overline{F}_\mP(s) \Vert^p \diff s
    + \int_{\lfloor t\rfloor_{\mP}}^t (t-s)^{\alpha-1} \Vert \overline{F}_\mP(s) \Vert^p \diff s \\
    & = \int_0^{\lfloor t\rfloor_{\mP}} (t-s)^{\alpha-1} \Vert \overline{F}_\mP(s) \Vert^p \diff s
    + \Vert \overline{F}_\mP(\lceil t \rceil_\mP) \Vert^p \int_{\lfloor t\rfloor_{\mP}}^t (t-s)^{\alpha-1} \diff s \\
    & \le \int_0^{\lfloor t\rfloor_{\mP}} (\lfloor t\rfloor_{\mP}-s)^{\alpha-1} \Vert \overline{F}_\mP(s) \Vert^p \diff s
    + \Vert \overline{F}_\mP(\lceil t \rceil_\mP) \Vert^p \int_{\lfloor t\rfloor_{\mP}}^{\lceil t \rceil_\mP} (\lceil t \rceil_\mP-s)^{\alpha-1} \diff s \\
    & \le C_{p,\alpha} \Vert f \Vert^p_{L^p_{\alpha}(0, T; \mH)} + 
    \int_0^{\lceil t \rceil_\mP} (\lceil t \rceil_\mP -s)^{\alpha-1} \Vert \overline{F}(s) \Vert^p \diff s
    \le 2C_{p,\alpha} \Vert f \Vert^p_{L^p_{\alpha}(0, T; \mH)}.
\end{aligned}
\end{equation}
Therefore by taking supremum over $t \in [0, T]$ and $C = (2C_{p,\alpha})^{1/p}$, we finish the proof of this lemma.

For $p = 1$, the proof proceeds almost the same way as before. The only difference worth noting is that, instead of \eqref{eq:L2alpha-proof2}, we have
\[
  \Vert F_k \Vert = \l \Vert \fint_{t_{k-1}}^{t_k} f(s) \diff s \r \Vert
  \leq  \fint_{t_{k-1}}^{t_k} (t_n-s)^{\alpha-1} \Vert f(s) \Vert \diff s  \sup_{s \in [t_{k-1},t_k]} \frac1{(t_n-s)^{\alpha - 1}} .  
\]
Next, we observe that, since $\alpha -1 \in (-1,0)$, then the function $s \mapsto s^{\alpha - 1}$ belongs to the Muckenhoupt class $A_1(\mR_+)$. Thus,
\[
  \sup_{s \in [a,b]} \frac1{s^{\alpha-1}} \fint_a^b s^{\alpha-1} \diff s \leq C_\alpha, \quad \forall 0\leq a<b.
\]
With this information, the proof proceeds without change.
\end{proof}

It turns out that averaging is not only continuous, but possesses suitable approximation properties in this space. Namely, we have a control on the difference between fractional integrals of $f \in L^p_{\alpha}(0, T; \mH)$ and its averages.

\begin{Lemma}[approximation]\label{lemma:alpha-int-f-Fbar}
Let $p \in [1,\infty)$, $\alpha \in (0,1)$, $f \in L^p_\alpha(0,T;\mH)$, and $\mP$ be a partition of $[0,T]$ as in \eqref{eq:def-partition}. Let $p'$ be the \Hold conjugate of $p$, $\bF = \{ \fint_{t_{n-1}}^{t_n} f(t) \diff t\}_{n=1}^N \subset \mH^N$, and let $\overline{F}_\mP$ be defined as in \eqref{eq:def-pw-const}. Then we have
\begin{equation}\label{eq:alpha-int-f-Fbar}
  \sup_{t \in [0,T]} \l\Vert \int_0^t (t-s)^{\alpha-1} \l( f(s) - \overline{F}_\mP(s) \r) \diff s \r\Vert \le C \tau^{\alpha/p'} \Vert f - \overline{F}_\mP \Vert_{L^p_{\alpha}(0, T; \mH)} \le C' \tau^{\alpha/p'} \Vert f \Vert_{L^p_{\alpha}(0, T; \mH)},
\end{equation}
where the constants $C, C'$ depend only on $p$ and $\alpha$. In addition, for any $\beta \in (0,1)$ we also have
\begin{equation}\label{eq:double-alpha-int-f-Fbar}
\begin{aligned}
  \sup_{r \in [0,T]} \int_0^r (r-t)^{\alpha-1}  \l\Vert \int_0^t (t-s)^{\beta-1} \l( f(s) - \overline{F}_\mP(s) \r) \diff s \r\Vert^p \diff t  \\
  \le C_1 \tau^{p\beta} \Vert f - \overline{F}_\mP \Vert^p_{L^p_{\alpha}(0, T; \mH)} \le C'_1 \tau^{p\beta} \Vert f \Vert^p_{L^p_{\alpha}(0, T; \mH)},
\end{aligned}
\end{equation}
where the constants $C_1, C'_1$ depend on $p$, $\alpha$, and $\beta$. As usual, when $p = 1$, we have $p' = \infty$ and $1/p'$ is treated as $0$.
\end{Lemma}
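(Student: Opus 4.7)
The second inequality in both \eqref{eq:alpha-int-f-Fbar} and \eqref{eq:double-alpha-int-f-Fbar} will follow immediately from the triangle inequality combined with \Cref{lemma:pw-interp-L2alpha}, which gives $\|f - \overline{F}_\mP\|_{L^p_\alpha(0,T;\mH)} \le (1+C)\|f\|_{L^p_\alpha(0,T;\mH)}$. My plan therefore focuses on the first inequality of each display. Writing $g = f - \overline{F}_\mP$, the property I shall repeatedly exploit is the vanishing moment $\int_{t_{k-1}}^{t_k} g(s)\diff s = 0$ on every full subinterval $I_k = (t_{k-1}, t_k]$, which holds because $F_k$ is by construction the average of $f$ on $I_k$.

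For \eqref{eq:alpha-int-f-Fbar}, I would split $\int_0^t = \sum_{k=1}^{n(t)-1}\int_{I_k} + \int_{\lfloor t\rfloor_\mP}^t$. On the last, partial, interval, H\"older's inequality with the splitting $(t-s)^{\alpha-1} = (t-s)^{(\alpha-1)/p'}(t-s)^{(\alpha-1)/p}$ directly yields the bound $(\tau^\alpha/\alpha)^{1/p'}\|g\|_{L^p_\alpha}$. On each full interval $I_k$, the vanishing moment permits subtracting an arbitrary constant from the kernel,
\[
  \int_{I_k}(t-s)^{\alpha-1} g(s)\diff s = \int_{I_k}\bigl[(t-s)^{\alpha-1} - (t-t_{k-1})^{\alpha-1}\bigr]g(s)\diff s,
\]
which I would estimate by combining the mean-value oscillation bound $|(t-s)^{\alpha-1} - (t-t_{k-1})^{\alpha-1}| \le (1-\alpha)(t-t_k)^{\alpha-2}\tau_k$ with the weighted H\"older inequality
\[
  \int_{I_k}\|g(s)\|\diff s \le \tau_k^{1/p'}(t - t_{k-1})^{(1-\alpha)/p} G_k, \qquad G_k^p := \int_{I_k}(t-s)^{\alpha-1}\|g(s)\|^p\diff s.
\]
Full intervals for which $t - t_k$ is comparable to (or smaller than) $\tau_k$ will be treated instead without cancellation, exactly as the partial interval. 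The individual contributions are then assembled by the discrete H\"older inequality and a Riemann-sum-type bound $\sum_k \tau_k^{p'+1}(t-t_k)^{\alpha-p'-1} \le C\tau^\alpha$ (finite because $\alpha - p' - 1 < -1$ for every $p' \ge 1$), which produces the desired $C\tau^{\alpha/p'}\|g\|_{L^p_\alpha}$.

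For \eqref{eq:double-alpha-int-f-Fbar}, I would apply the same mean-zero decomposition to the inner integral $\int_0^t(t-s)^{\beta-1}g(s)\diff s$, but package the outcome as a pointwise-in-$t$ weighted $L^p$ bound whose weight is inherited from a H\"older step tailored to the outer weight $(r-t)^{\alpha-1}$. Raising this pointwise bound to the $p$th power, multiplying by $(r-t)^{\alpha-1}$, integrating in $t$, and swapping the order of integration yields the classical Beta-function convolution
\[
  \int_s^r (r-t)^{\alpha-1}(t-s)^{\gamma-1}\diff t = B(\alpha,\gamma)(r-s)^{\alpha+\gamma-1}
\]
for the exponent $\gamma$ arising from that inner H\"older step. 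A careful choice of $\gamma$ ensures that the result is controlled by $\int_0^r(r-s)^{\alpha-1}\|g(s)\|^p\diff s$ up to a constant depending only on $p$, $\alpha$, and $\beta$, which gives the claimed $C_1\tau^{p\beta}\|g\|^p_{L^p_\alpha}$.

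The hard part will be the discrete summation in \eqref{eq:alpha-int-f-Fbar}: verifying the Riemann-sum bound uniformly over non-uniform partitions, and correctly isolating the (few) short full intervals where the oscillation estimate is useless and one must fall back on a direct H\"older bound. A further subtlety is to tune the inner H\"older weight in the pointwise estimate for \eqref{eq:double-alpha-int-f-Fbar} so that the Beta-function identity recovers exactly the exponent $\tau^{p\beta}$, rather than the weaker $\tau^{p\beta/p'}$ one obtains by a naive reapplication of \eqref{eq:alpha-int-f-Fbar}. Finally, the case $p = 1$ needs a separate treatment in which $1/p'$ is interpreted as $0$ and H\"older is replaced by sup-bounds, in the same spirit as the $p=1$ portion of the proof of \Cref{lemma:pw-interp-L2alpha}.
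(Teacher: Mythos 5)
Your reduction of the second inequalities to \Cref{lemma:pw-interp-L2alpha}, the splitting into the partial last interval plus full subintervals, and the use of the vanishing moment to subtract $(t-t_{k-1})^{\alpha-1}$ from the kernel all coincide with the paper's proof; the partial-interval bound is also identical. For the full intervals in \eqref{eq:alpha-int-f-Fbar} you diverge: a pointwise oscillation bound $(1-\alpha)(t-t_k)^{\alpha-2}\tau_k$, a weighted H\"older on each $I_k$, and then discrete H\"older closed by the summation bound $\sum_k\tau_k^{p'+1}(t-t_k)^{\alpha-p'-1}\le C\tau^{\alpha}$ over the intervals with $t-t_k\ge\tau_k$. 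This route can be made to work: the summation bound is true, but establishing it uniformly over arbitrary partitions is itself a two-regime argument (compare the $k$-th term with $\int_{I_k}(t-s)^{\alpha-1}\diff s$ when $t-t_k\le\tau$ and with $\tau^{p'}\int_{I_k}(t-s)^{\alpha-p'-1}\diff s$ when $t-t_k>\tau$), and you leave exactly this step unproved. The paper avoids the discrete sum entirely by bounding the oscillation by $(t-s)^{\alpha-1}-(t-s+\tau)^{\alpha-1}$, applying H\"older with weight $(t-s)^{\alpha-1}$ and the observation that $\bigl[1-\bigl(\tfrac{t-s+\tau}{t-s}\bigr)^{\alpha-1}\bigr]^{p'}\le 1-\bigl(\tfrac{t-s+\tau}{t-s}\bigr)^{\alpha-1}$, and evaluating the remaining telescoping integral exactly as $\le\tau^{\alpha}/\alpha$; this is shorter and needs no case distinction on the intervals.

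For \eqref{eq:double-alpha-int-f-Fbar} there is a genuine gap. After H\"older on the inner integral and Fubini, the quantity to control is $\int_s^r(r-t)^{\alpha-1}K(t,s)\diff t$, where $K(t,s)$ is either $(t-s)^{\beta-1}\chi_{\{t\le\lceil s\rceil_\mP\}}$ or the difference $(t-s)^{\beta-1}-(t-s+\tau)^{\beta-1}$, and what is needed is the bound $C\tau^{\beta}(r-s)^{\alpha-1}$, uniformly in $r-s\in(0,T]$. No global majorization of $K$ by a pure power $(t-s)^{\gamma-1}$ followed by the Beta identity can deliver this: one needs $\gamma>0$ for convergence, and then $(r-s)^{\alpha+\gamma-1}=(r-s)^{\alpha-1}(r-s)^{\gamma}$ carries no power of $\tau$ once $r-s\gg\tau$, so ``tuning $\gamma$'' cannot close the argument. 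The actual mechanism, which your plan omits, is a case split: when $r-s\lesssim\tau$ the Beta identity is indeed enough, since $(r-s)^{\alpha+\beta-1}\le(2\tau)^{\beta}(r-s)^{\alpha-1}$; when $r-s\gg\tau$ one must use that $K(\cdot,s)$ is concentrated within distance $\tau$ of $s$ (through the cutoff at $\lceil s\rceil_\mP$, respectively the decay of the difference kernel), on which set $(r-t)^{\alpha-1}\le\bigl(\tfrac{r-s}{2}\bigr)^{\alpha-1}$ can be pulled out. This is precisely the content of \eqref{eq:double-alpha-int-f-Fbar-proof2} and \eqref{eq:newstuffBetafunc} in the paper and is the heart of the second estimate; without it your outline does not yield $\tau^{p\beta}$.
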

\begin{proof}
We first notice that the second inequalities in both \eqref{eq:alpha-int-f-Fbar} and \eqref{eq:double-alpha-int-f-Fbar} follow directly from \Cref{lemma:pw-interp-L2alpha} and the triangle inequality. 

To show the first inequality in \eqref{eq:alpha-int-f-Fbar}, given $\mP$ we consider $t \in [0,T]$. Using that $f-\overline{F}_\mP$ has zero mean on each subinterval of the partition, we can write
\begin{equation}\label{eq:alpha-int-f-Fbar-proof1}
\begin{aligned}
&\int_0^t (t-s)^{\alpha-1} \l( f(s) - \overline{F}_\mP(s) \r) \diff s \\
&= \int_{\lfloor t \rfloor_\mP}^t (t-s)^{\alpha-1} \l( f(s) - \overline{F}_\mP(s) \r) \diff s + \sum_{k=1}^{n(t)-1} \int_{t_{k-1}}^{t_k} (t-s)^{\alpha-1} \l( f(s) - \overline{F}_\mP(s) \r) \diff s \\
&= \int_{\lfloor t\rfloor_{\mP}}^t (t-s)^{\alpha-1} \l( f(s) - \overline{F}_\mP(s) \r) \diff s \\ &+ \sum_{k=1}^{n(t)-1} \int_{t_{k-1}}^{t_k} \l( (t-s)^{\alpha-1} - (t-t_{k-1})^{\alpha-1}\r) \l( f(s) - \overline{F}_\mP(s) \r) \diff s = \mathrm{I}_1(t) + \mathrm{I}_2(t).
\end{aligned}
\end{equation}
For the first term, denoted $I_1(t)$, we have
\[
\begin{aligned}
\l\Vert \mathrm{I}_1(t) \r\Vert 
&\le \l( \int_{\lfloor t\rfloor_{\mP}}^t (t-s)^{\alpha-1} \l\Vert f(s) - \overline{F}_\mP(s) \r\Vert^p \diff s\r)^{1/p} \l( \int_{\lfloor t\rfloor_{\mP}}^t (t-s)^{\alpha-1} \diff s \r)^{1/p'} \\
& \le \Vert f - \overline{F}_\mP \Vert_{L^p_{\alpha}(0, T; \mH)} \l( \frac{1}{\alpha} (t-\lfloor t\rfloor_{\mP})^{\alpha} \r)^{1/p'}
\le C_1 \tau^{\alpha/p'} \Vert f - \overline{F}_\mP \Vert_{L^p_{\alpha}(0, T; \mH)},
\end{aligned}
\]
where $C_1$ only depends on $p$ and $\alpha$. For the second term, noticing that $t - t_{k-1} + \tau > t - s$ for $s \in (t_{k-1}, t_k)$ we have
\[
\begin{aligned}
\l\Vert \mathrm{I}_2 \r\Vert 
&\le \int_0^{\lfloor t\rfloor_{\mP}} \l( (t-s)^{\alpha-1} - (t-s+\tau)^{\alpha-1} \r) \Vert f(s) - \overline{F}_\mP(s) \Vert \diff s \\
& \le \l[ \int_{0}^{\lfloor t\rfloor_{\mP}} (t-s)^{\alpha-1} \l\Vert f(s) - \overline{F}_\mP(s) \r\Vert^p \diff s\r]^{1/p} \l[ \int_0^{\lfloor t\rfloor_{\mP}} (t-s)^{\alpha-1} \l[ 1 - \l[ \frac{t-s+\tau}{t-s}\r]^{\alpha-1} \r]^{p'} \diff s\r]^{1/p'} \\
&\le \Vert f - \overline{F}_\mP \Vert_{L^p_{\alpha}(0, T; \mH)} \l( \int_0^{\lfloor t\rfloor_{\mP}} (t-s)^{\alpha-1} - (t-s+\tau)^{\alpha-1} \diff s\r)^{1/p'}.
\end{aligned}
\]
Since
\[
\begin{aligned}
 \int_0^{\lfloor t\rfloor_{\mP}} (t-s)^{\alpha-1} - (t-s+\tau)^{\alpha-1} \diff s
&= \frac{1}{\alpha} \l( t^{\alpha} - (t-\lfloor t\rfloor_{\mP})^{\alpha} - (t+\tau)^{\alpha} + (t-\lfloor t\rfloor_{\mP} + \tau)^{\alpha} \r) \\
& \le \frac{1}{\alpha} \l( (t-\lfloor t\rfloor_{\mP} + \tau)^{\alpha} - (t-\lfloor t\rfloor_{\mP})^{\alpha} \r)
\le \frac{\tau^{\alpha}}{\alpha},
\end{aligned}
\]
we obtain
\[
\l\Vert \mathrm{I}_2(t) \r\Vert \le C_2 \tau^{\alpha/p'} \Vert f - \overline{F}_\mP \Vert_{L^p_{\alpha}(0, T; \mH)},
\]
and \eqref{eq:alpha-int-f-Fbar} follows after combining the bounds for $\mathrm{I}_1(t)$ and $\mathrm{I}_2(t)$ that we have obtained.

To prove \eqref{eq:double-alpha-int-f-Fbar} we apply the \Hold inequality to \eqref{eq:alpha-int-f-Fbar-proof1} with $\alpha$ replaced by $\beta$ to get
\[
  \l\Vert \int_0^t (t-s)^{\beta-1} \l( f(s) - \overline{F}_\mP(s) \r) \diff s \r\Vert^p \leq  \mathrm{II}_1(t)^{p-1} \cdot \left( \mathrm{II}_2(t) + \mathrm{II}_3(t) \right),
\]
where
\begin{align*}
  \mathrm{II}_1(t) &= \int_{\lfloor t \rfloor_\mP}^t (t-s)^{\beta-1} \diff s + \sum_{k=1}^{n(t)-1} \int_{t_{k-1}}^{t_k} \l[ (t-s)^{\beta-1} - (t-t_{k-1})^{\beta-1} \r] \diff s, \\
  \mathrm{II}_2(t) &= \int_{\lfloor t \rfloor_\mP}^t \!\!\! (t-s)^{\beta-1} \l\Vert f(s) - \overline{F}_\mP(s) \r\Vert^p \diff s, \\
  \mathrm{II}_3(t) &= \sum_{k=1}^{n(t)-1} \int_{t_{k-1}}^{t_k} \!\!\! \l( (t-s)^{\beta-1} - (t-t_{k-1})^{\beta-1}\r) \l\Vert f(s) - \overline{F}_\mP(s) \r\Vert^p \diff s.
\end{align*}
Arguing as in the bound for $\mathrm{I}_2(t)$
\begin{align*}
  \mathrm{II}_1(t) &= \frac{1}{\beta}(t-\lfloor t \rfloor_\mP)^{\beta} + \int_0^{\lfloor t \rfloor_\mP} \l[ (t-s)^{\beta-1} - (t-s+\tau)^{\beta-1} \r]\diff s \leq \frac2\beta \tau^\beta.
\end{align*}
Thus, to obtain \eqref{eq:double-alpha-int-f-Fbar} it suffices to show that, for every $r \in [0,T]$,
\[
\int_0^r (r-t)^{\alpha-1} \l( \mathrm{II}_2(t) + \mathrm{II}_3(t) \r) \diff t \le C_2 \tau^{\beta} \Vert f - \overline{F}_\mP \Vert^p_{L^p_{\alpha}(0, T; \mH)}
\]
with some constant $C_2$ only depending on $p$, $\alpha$, and $\beta$. To estimate the fractional integral of $\mathrm{II}_2$ by Fubini's theorem we have
\begin{equation}\label{eq:double-alpha-int-f-Fbar-proof1}
\int_0^r (r-t)^{\alpha-1} \mathrm{II}_2(t) \diff t = 
\int_0^r \l\Vert f(s) - \overline{F}_\mP(s) \r\Vert^p 
\int_s^{\lceil s \rceil_\mP \wedge r }  (r-t)^{\alpha-1} (t-s)^{\beta-1} \diff t \diff s,
\end{equation}
where we set $a\wedge b = \min \{a,b\}$.
We claim that there exists a constant $C_3$ depending on $\alpha$ and $\beta$ such that
\begin{equation}\label{eq:double-alpha-int-f-Fbar-proof2}
\int_s^{ \lceil s \rceil_\mP \wedge r } (r-t)^{\alpha-1} (t-s)^{\beta-1} \diff t \le C_3 (r-s)^{\alpha-1} \tau^{\beta}.
\end{equation}
On the one hand, for $r-s \le 2\tau$, we simply have
\[
\begin{aligned}
\int_s^{ \lceil s \rceil_\mP \wedge r } (r-t)^{\alpha-1} (t-s)^{\beta-1} \diff t &\le \int_s^r (r-t)^{\alpha-1} (t-s)^{\beta-1} \diff t \\
&= \frac{\Gamma(\alpha)\Gamma(\beta)}{\Gamma(\alpha+\beta)} (r-s)^{\alpha+\beta-1} \le \frac{\Gamma(\alpha)\Gamma(\beta)}{\Gamma(\alpha+\beta)} 
(r-s)^{\alpha-1} (2\tau)^{\beta}.
\end{aligned}
\]
On the other hand, if $r-s > 2\tau$, then
\[
\begin{aligned}
&\int_s^{ \lceil s \rceil_\mP \wedge r } (r-t)^{\alpha-1} (t-s)^{\beta-1} \diff t 
\le \int_s^{s+\tau} (r-t)^{\alpha-1} (t-s)^{\beta-1} \diff t \\
& \le \int_s^{s+\tau} \l( \frac{r-s}{2} \r)^{\alpha-1} (t-s)^{\beta-1} \diff t
= \frac{2^{1-\alpha}}{\beta} (r-s)^{\alpha-1} \tau^{\beta}.
\end{aligned}
\]
Therefore \eqref{eq:double-alpha-int-f-Fbar-proof2} is proved, and thus \eqref{eq:double-alpha-int-f-Fbar-proof2} implies that
\[
\int_0^r (r-t)^{\alpha-1} \mathrm{II}_2(t) \diff t \le C_3 \tau^{\beta} \int_0^r (r-s)^{\alpha-1} \l\Vert f(s) - \overline{F}_\mP(s) \r\Vert^p \diff s
\le C_3 \tau^{\beta} \Vert f - \overline{F}_\mP \Vert^p_{L^p_{\alpha}(0, T; \mH)}.
\]
For $\mathrm{II}_3(t)$, we again apply Fubini's theorem to obtain
\[
\int_0^r (r-t)^{\alpha-1} \mathrm{II}_3(t) \diff t = 
\int_0^r \l\Vert f(s) - \overline{F}_\mP(s) \r\Vert^p 
\int_s^r (r-t)^{\alpha-1} \l( (t-s)^{\beta-1} - (t-s+\tau)^{\beta-1}\r) \diff t \diff s .
\]
To conclude, we claim that
\begin{equation}
\label{eq:newstuffBetafunc}
A = \int_s^r (r-t)^{\alpha-1} \l( (t-s)^{\beta-1} - (t-s+\tau)^{\beta-1}\r) \diff t
\le C_4 \tau^{\beta} (r-s)^{\alpha-1},
\end{equation}
for a constant $C_4$ depending on $\alpha$ and $\beta$. Indeed, if this is the case, we have
\[
\int_0^r (r-t)^{\alpha-1} \mathrm{II}_3(t) \diff t \le C_4 \tau^{\beta} \int_0^r (r-s)^{\alpha-1} \l\Vert f(s) - \overline{F}_\mP(s) \r\Vert^p \diff s
\le C_4 \tau^{\beta} \Vert f - \overline{F}_\mP \Vert^p_{L^p_{\alpha}(0, T; \mH)},
\]
and we combine the estimates for $\mathrm{II}_2(t)$ and $\mathrm{II}_3(t)$ together and conclude the proof of \eqref{eq:double-alpha-int-f-Fbar}.

Let us now turn to the proof of \eqref{eq:newstuffBetafunc}. First, if $r-s \leq \tau$ then it suffices to observe that
\[
 A \leq  \int_s^r (r-t)^{\alpha -1}(t-s)^{\beta-1} \diff t = \frac{\Gamma(\alpha)\Gamma(\beta)}{\Gamma(\alpha+\beta)} (r-s)^{\alpha+\beta-1} \leq \frac{\Gamma(\alpha)\Gamma(\beta)}{\Gamma(\alpha+\beta)} \tau^\beta (r-s)^{\alpha-1}.
\]
Now, if $r-s>\tau$, we estimate as
\begin{align*}
  A &= \int_s^r(r-t)^{\alpha-1}(t-s)^{\beta-1}\diff t - \int_s^r (r-t)^{\alpha-1}(t-s+\tau)^{\beta-1} \diff t \\
  &= \frac{\Gamma(\alpha)\Gamma(\beta)}{\Gamma(\alpha+\beta)} (r-s)^{\alpha+\beta-1} -\int_{-\tau}^{r-s} (t+\tau)^{\beta-1}(r-t-s)^{\alpha-1} \diff t + \int_0^\tau (r-s-t+\tau)^{\alpha-1}t^{\beta-1} \diff t \\
  &= \frac{\Gamma(\alpha)\Gamma(\beta)}{\Gamma(\alpha+\beta)} \left( (r-s)^{\alpha+\beta-1} - (r-s + \tau)^{\alpha+\beta-1} \right) + \int_0^\tau (r-s-t+\tau)^{\alpha-1}t^{\beta-1} \diff t.
\end{align*}
The first term can be bounded using that $r-s>\tau$ as follows
\[
  (r-s)^{\alpha+\beta-1} - (r-s + \tau)^{\alpha+\beta-1} \leq \max\{\alpha+\beta-1,0\} \tau (r-s)^{\alpha+\beta-2} \leq \tau^\beta (r-s)^{\alpha-1}.
\]
On the other hand, since for $t \in (0,\tau)$ we have that $r-s+\tau-t \ge r-s$, the second term can be estimated as
\[
  \int_0^\tau (r-s-t+\tau)^{\alpha-1}t^{\beta-1} \diff t \leq (r-s)^{\alpha-1} \int_0^{\tau} t^{\beta-1} \diff t = \frac{1}{\alpha} (r-s)^{\alpha-1} \tau^{\beta}.
\]
This concludes the proof.
\end{proof}

We refer the reader to \cite[section 4]{LiLiuCompact-18} for further results concerning the space $L^p_{\alpha}(0, T; \mH)$. 

\subsection{The Caputo derivative}
\label{sub:Caputo}

As we mentioned in the Introduction, the definition of the Caputo derivative, given in \eqref{eq:defofCaputo} seems unnatural. Smoothness of higher order is needed to define a fractional derivative. Several attempts at resolving this discrepancy have been proposed in the literature and we here quickly describe a few of them.

First, one of the main reasons that motivate practitioners to use, among the many possible definitions, the Caputo derivative \eqref{eq:defofCaputo} is, first, that $D_c^\alpha 1 = 0$ and second that this derivative allows one to pose initial value problems like \eqref{eq:theGradFlow}. However, it is by now known that even in the linear case solutions of problems involving the Caputo derivative possess a weak singularity in time \cite{MR3966570,MR3820275,MR3589365}. This singular behavior of the solution forces one to wonder: If fractional derivatives describe processes with memory, why is it sufficient to know the state at one particular point (initial condition) to uniquely describe the state at all future times? Is it possible that the singularity is precisely caused by the fact that we are ignoring the past states of the system? This motivates the following: Set $w(t) =w_0$ for $t\leq 0$. Therefore,
\begin{equation}
\label{eq:defofMarchaud}
  \begin{aligned}
    D_c^\alpha w(t) &= \frac1{\Gamma(1-\alpha)} \int_{-\infty}^t \frac{\dot w(r)}{(t-r)^\alpha} \diff r =
    \frac1{\Gamma(1-\alpha)} \int_{-\infty}^t \frac{(w(r)-w(t))\dot{}}{(t-r)^\alpha} \diff r \\
    &= \frac1{\Gamma(-\alpha)} \int_{-\infty}^t \frac{ w(r) - w(t)}{(t-r)^{\alpha+1}} \diff r = D_m^\alpha w(t),
  \end{aligned}
\end{equation}
where, in the last step, we integrated by parts. The expression $D_m^\alpha w(t)$ is known as the Marchaud derivative of order $\alpha$ of the function $w$. This is the way that the Caputo derivative has been understood, for instance, in \cite{MR3488533,AllenNondiv,AllenPorous,AllenNondivHolder}. We comment, in passing, that owing to \cite{MR3456835} this fractional derivative satisfies an extension problem similar to the (by now) classical Caffarelli Silvestre extension \cite{MR2354493,MR2754080} for the fractional Laplacian.

Another approach, and the one we shall adopt here, is to notice that \eqref{eq:defofCaputo} can be converted, for sufficiently smooth functions, into a Volterra type equation
\begin{equation}
\label{eq:Volterra-eq}
  w(t) = w(0) + \dfrac{1}{\Gamma(\alpha)} \int_0^t (t-s)^{\alpha-1} D_c^{\alpha} w(s) \diff s, \quad \forall t \in [0, T].
\end{equation}
This identity is the beginning of the theory developed in \cite{LiLiuGeneralized-18} to extend the notion of Caputo derivative.
To be more specific, \cite{LiLiuGeneralized-18} considers the set of distributions
\[
\mathscr{E}^T = \{ w \in \mathscr{D}'(\mR;\mH): \exists M_{w} \in (-\infty, T), \textrm{supp}(w) \subset [-M_w, T) \}.
\]
for a fixed time $T > 0$. Then the modified Riemann Liouville derivative for any distribution $w \in \mathscr{E}^T$ is defined, following classical references like \cite[Section 1.5.5]{MR0097715}, as
\[
D_{rl}^\alpha w = w * g_{-\alpha} \in \mathscr{E}^T
\]
where $g_{-\alpha}(t) = \frac{1}{\Gamma(1-\alpha)} D(\theta(t) t^{-\alpha})$, with $\theta$ being the Heaviside function, is a distribution supported in $[0, \infty)$ and the convolution is understood as the generalized definition between distributions. Here $D$ denotes the distributional derivative. Reference \cite{LiLiuGeneralized-18} then uses this to define the generalized Caputo derivative of $w \in L^1_{\textrm{loc}}([0,T);\mH)$ associated with $w_0$ by
\[
D_{c}^\alpha  w = D_{rl}^\alpha (w - w_0).
\]
If there exists $w(0) \in \mH$ such that $\lim_{t \downarrow 0} \fint_0^t \Vert w(s) - w(0) \Vert \diff s = 0$, then we always impose $w_0 = w(0)$ in this definition. It is shown in \cite[Theorem 3.7]{LiLiuGeneralized-18} that for such a function $w$, \eqref{eq:Volterra-eq} holds for Lebesgue a.e.~$t \in (0,T)$ provided that the generalized Caputo derivative $D_{c}^\alpha w \in L^1_{\textrm{loc}}([0,T);\mH)$.

We also comment that \cite[Proposition 3.11(ii)]{LiLiuGeneralized-18} implies that for every function $w \in L^2(0,T;\mH)$ with
$D_c^\alpha w \in L^2(0,T;\mH)$ we have
\begin{equation}\label{eq:Caputo-ineq-norm-sq}
  \frac12 D_c^{\alpha} \Vert w \Vert^2(t) \le \l\langle D_c^{\alpha} w(t), w(t) \r\rangle.
\end{equation}

Finally, we recall that the Mittag-Leffler function of order $\alpha\in (0,1)$ is defined via
\[
  E_\alpha(z) = \sum_{k=0}^\infty \frac{z^k}{\Gamma(\alpha k+1)}.
\]
We refer the reader to \cite{MR3244285} for an extensive treatise on this function. Here we just mention that this function satisfies, for any $\lambda \in \mR$, the identity
\begin{equation}\label{eq:GronwallIsExp}
  D_c^\alpha E_\alpha( \lambda t^\alpha) = \lambda E_\alpha( \lambda t^\alpha), \qquad E_\alpha(0) = 1.
\end{equation}

\subsubsection{An auxiliary estimate}
Having defined the Caputo derivative of a function, we present an auxiliary result. Namely, an estimate on functions that have piecewise constant, over some partition $\mP$, Caputo derivative.

\begin{Lemma}[continuity]\label{lemma:alpha-int-w}
Let $p \in [1,\infty)$; $\mP$ be a partition, as in \eqref{eq:def-partition}, of $[0,T]$; and $w \in L^1(0,T;\mH)$ be such that its generalized Caputo derivative $D_c^{\alpha} w \in L^p_{\alpha}(0, T; \mH)$,  and it is piecewise constant over $\mP$. Then we have
\begin{equation}\label{eq:alpha-int-w}
  \sup_{r \in [0,T]} \int_0^r (r-t)^{\alpha-1} \Vert w(\tceil_\mP) - w(t) \Vert^p \diff t \le C \tau^{p\alpha} \Vert D_c^{\alpha}w \Vert^p_{L^p_{\alpha}(0, T; \mH)},
\end{equation}
where the constant $C$ depends only on $\alpha$.
\end{Lemma}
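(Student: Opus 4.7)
The plan is to exploit the piecewise constancy of $g := D_c^{\alpha}w$ on $\mP$ in order to split the Volterra representation \eqref{eq:Volterra-eq} into an explicit ``local'' contribution on the subinterval containing $t$ and a ``non-local'' remainder that can be handled by the same techniques used in the proof of \Cref{lemma:alpha-int-f-Fbar}. For $t \in (t_{k-1}, t_k]$, one writes
\[
\Gamma(\alpha)(w(\tceil_\mP) - w(t)) = \int_0^t\bigl[(\tceil_\mP - s)^{\alpha-1} - (t-s)^{\alpha-1}\bigr] g(s)\diff s + \int_t^{\tceil_\mP}(\tceil_\mP - s)^{\alpha-1} g(s)\diff s,
\]
and splits the first integral at $\lfloor t\rfloor_\mP$. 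On the subinterval $(\lfloor t\rfloor_\mP, \tceil_\mP)$, $g$ takes the constant value $g_k = g(\tceil_\mP(t))$, so the two integrals over this subinterval can be computed explicitly and combine into a single local term $A(t) := \frac{g_k}{\Gamma(\alpha+1)}[\tau_k^\alpha - (t - \lfloor t\rfloor_\mP)^\alpha]$. The remaining ``non-local'' piece is $B(t) := \frac{1}{\Gamma(\alpha)}\int_0^{\lfloor t\rfloor_\mP}[(t-s)^{\alpha-1} - (\tceil_\mP - s)^{\alpha-1}]g(s)\diff s$, so $\|w(\tceil_\mP) - w(t)\|^p \le 2^{p-1}(\|A(t)\|^p + \|B(t)\|^p)$.

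For $A$, subadditivity of $x\mapsto x^\alpha$ gives $\tau_k^\alpha - (t - \lfloor t\rfloor_\mP)^\alpha \le (\tceil_\mP - t)^\alpha \le \tau^\alpha$; then piecewise constancy yields $g(\tceil_\mP(t)) = g(t)$, whence $\int_0^r(r-t)^{\alpha-1}\|A(t)\|^p \diff t \le C\tau^{p\alpha}\int_0^r (r-t)^{\alpha-1}\|g(t)\|^p\diff t \le C\tau^{p\alpha}\|g\|_{L^p_\alpha(0,T;\mH)}^p$. For $B$, I would apply H\"older's inequality with the non-negative weight $[(t-s)^{\alpha-1} - (\tceil_\mP - s)^{\alpha-1}]$; the elementary bound $\int_0^{\lfloor t\rfloor_\mP}[(t-s)^{\alpha-1} - (\tceil_\mP - s)^{\alpha-1}]\diff s \le \tau^\alpha/\alpha$ (computable with an antiderivative and concavity of $x^\alpha$) then gives $\|B(t)\|^p \le C\tau^{\alpha(p-1)}\tilde{\mathrm{J}}(t)$, where $\tilde{\mathrm{J}}(t) := \int_0^{\lfloor t\rfloor_\mP}[(t-s)^{\alpha-1} - (\tceil_\mP - s)^{\alpha-1}]\|g(s)\|^p\diff s$. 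Applying Fubini to $\int_0^r(r-t)^{\alpha-1}\tilde{\mathrm{J}}(t)\diff t$, for each fixed $s$ the inner $t$-integral ranges over $(\tceil_\mP(s), r]$ and, since $\tceil_\mP(t) \le t + \tau$, is dominated by $\int_s^r (r-t)^{\alpha-1}[(t-s)^{\alpha-1} - (t + \tau - s)^{\alpha-1}]\diff t$. This is exactly the integral $A$ treated at \eqref{eq:newstuffBetafunc} with $\beta = \alpha$, hence bounded by $C\tau^\alpha(r-s)^{\alpha-1}$; integrating against $\|g(s)\|^p$ yields $\int_0^r(r-t)^{\alpha-1}\|B(t)\|^p\diff t \le C\tau^{p\alpha}\|g\|_{L^p_\alpha(0,T;\mH)}^p$. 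Adding the two bounds and taking the supremum over $r$ concludes the proof.

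The main obstacle is that a naive H\"older/Fubini applied to the full integral $\int_0^t[(t-s)^{\alpha-1} - (\tceil_\mP - s)^{\alpha-1}]\|g(s)\|\diff s$, without first isolating the subinterval containing $t$, produces sums of the form $\sum_k \int_{t_{k-1}}^{t_k}(t_k - s)^{\alpha-1}\|g(s)\|^p \diff s$, which in general cannot be controlled by $\|g\|_{L^p_\alpha(0,T;\mH)}^p$ by a constant depending only on $\alpha$ (the ratio can be as large as $T^{1-\alpha}\tau^{\alpha-1}$). Piecewise constancy of $g$ is essential: it collapses this local contribution into a single term proportional to $g(\tceil_\mP(t)) = g(t)$, which sits naturally inside the $L^p_\alpha$ norm of $g$.
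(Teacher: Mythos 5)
Your proof is correct and follows essentially the same route as the paper's: both rest on the Volterra representation \eqref{eq:Volterra-eq}, a H\"older step against the difference-of-kernels weight, Fubini, and the kernel estimate of type \eqref{eq:newstuffBetafunc} with $\beta=\alpha$, with piecewise constancy of $D_c^\alpha w$ used to reduce the local contribution to a term proportional to $\Vert D_c^\alpha w(t)\Vert^p$. The only (harmless) difference is bookkeeping: you merge the two contributions on $(\lfloor t\rfloor_\mP,\tceil_\mP)$ into one explicit term $A(t)$, whereas the paper keeps the split $\int_0^t + \int_t^{\tceil_\mP}$ and treats the tail $\int_t^{\tceil_\mP}$ directly.
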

\begin{proof}
The representation \eqref{eq:Volterra-eq} allows us to write
\begin{multline*}
  w(\tceil_\mP) - w(t) =\\  \frac{1}{\Gamma(\alpha)} \Bigg[
  \int_0^t D_c^{\alpha}w(s) \l( (\tceil_\mP-s)^{\alpha-1} - (t-s)^{\alpha-1} \r) \diff s 
  + \int_{t}^{\tceil_\mP} D_c^{\alpha}w(s) (\tceil_\mP-s)^{\alpha-1} \diff s \Bigg].
\end{multline*}
Therefore by H\"older inequality, we have
\[
\begin{aligned}
  & \Vert w(\tceil_\mP) - w(t) \Vert^p
  \le \frac{1}{\Gamma^p(\alpha)} \l(  \int_0^t \l| (\tceil_\mP-s)^{\alpha-1} - (t-s)^{\alpha-1} \r| \diff s + \int_{t}^{\tceil_\mP} (\tceil_\mP-s)^{\alpha-1} \diff s \r)^{p-1} \\
  & \qquad \Bigg( \int_0^t \Vert D_c^{\alpha}w(s) \Vert^p \l| (\tceil_\mP-s)^{\alpha-1} - (t-s)^{\alpha-1} \r| \diff s
  + \int_{t}^{\tceil_\mP} \Vert D_c^{\alpha}w(s) \Vert^p (\tceil_\mP-s)^{\alpha-1} \diff s \Bigg)  \\
  &\le C \tau^{\alpha(p-1)}\Bigg( \int_0^t \Vert D_c^{\alpha}w(s) \Vert^p \l| (\tceil_\mP-s)^{\alpha-1} - (t-s)^{\alpha-1} \r| \diff s + \frac{(\tceil_\mP - t)^{\alpha}}{\alpha} \Vert D_c^{\alpha}w(t) \Vert^p \Bigg) \\
  &= C_1 \tau^{\alpha(p-1)} \int_0^t \Vert D_c^{\alpha}w(s) \Vert^p \l| (\tceil_\mP-s)^{\alpha-1} - (t-s)^{\alpha-1} \r| \diff s
  + C_2 \tau^{p\alpha}\Vert D_c^{\alpha}w(t) \Vert^p
  = \mathrm{I}_1(t) + \mathrm{I}_2(t),
\end{aligned}
\]
where the constants $C$, $C_1$, and $C_2$ depend only on $p$ and $\alpha$.

For $\mathrm{I}_2(t)$, we simply have
\[
\int_0^r (r-t)^{\alpha-1} \mathrm{I}_2(t) \diff t \le
C \tau^{p\alpha} \Vert D_c^{\alpha}w \Vert^p_{L^p_{\alpha}(0, T; \mH)}.
\]
Now to bound the integral for $\mathrm{I}_1(t)$, we use Fubini's theorem to get
\[
  \begin{aligned}
    &\int_0^r (r-t)^{\alpha-1} \mathrm{I}_1(t) \diff t= 
    C_1 \tau^{(p-1)\alpha} \int_0^r \l\Vert D_c^{\alpha}w(s) \r\Vert^p  \int_s^r (r-t)^{\alpha-1}
    \l| (\tceil_\mP-s)^{\alpha-1} - (t-s)^{\alpha-1} \r| \diff t \diff s.
  \end{aligned}
\]
We claim that
\begin{equation}\label{eq:alpha-int-w-proof1}
\int_s^r (r-t)^{\alpha-1}
\l| (\tceil_\mP-s)^{\alpha-1} - (t-s)^{\alpha-1} \r| \diff t
\le C_3 (r-s)^{\alpha-1} \tau^{\alpha},
\end{equation}
where $C_3$ only depends $\alpha$. If this is true, then we have
\[
\begin{aligned}
&\int_0^r (r-t)^{\alpha-1} \mathrm{I}_1(t) \diff t \le
C \tau^{p\alpha} \int_0^r \l\Vert D_c^{\alpha}w(s) \r\Vert^p (r-s)^{\alpha-1} \diff s
\le C \tau^{p\alpha} \Vert D_c^{\alpha}w \Vert^p_{L^p_{\alpha}(0, T; \mH)}.
\end{aligned}
\]

The proof of \eqref{eq:alpha-int-w-proof1} proceeds as the one for \eqref{eq:newstuffBetafunc}. For brevity we skip the details.
\end{proof}

\subsection{Some comparison estimates}
As a final preparatory step we present some  auxiliary results that shall be repeatedly used and are related to differential inequalities involving the Caputo derivative, and a Gr\"onwall-like lemma.

First, we present a comparison principle which is similar to \cite[Proposition 4.2]{MR3848192}. The proof can be done easily by contradiction, and therefore it is omitted here.

\begin{Lemma}[comparison]\label{lemma:frac-int-comparison}
Let $g_1,g_2 : [0,T] \times \mR \to \mR$ be both nondecreasing in their second argument and $g_2$ be measurable. Assume that $v, w \in C([0,T];\mR)$ satisfy $v(0) < w(0)$, and there is some $\alpha \in (0,1)$, for which
\[
  \begin{aligned}
    v(t) &\le g_1(t, v(t)) + \frac{1}{\Gamma(\alpha)} \int_0^t (t-s)^{\alpha-1} g_2(s,v(s)) \diff s, \\
    w(t) &> g_1(t, w(t)) + \frac{1}{\Gamma(\alpha)} \int_0^t (t-s)^{\alpha-1} g_2(s,w(s)) \diff s,
  \end{aligned}
\]
for every $t \in [0, T]$. Then we have $v < w$ on $[0,T]$. 
\end{Lemma}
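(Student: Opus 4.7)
My plan is to argue by contradiction, tracking the first time at which the ordering $v<w$ could fail. Suppose the conclusion is false, so that the set $S = \{t \in [0,T] : v(t) \ge w(t)\}$ is nonempty. Since $v(0) < w(0)$ and $v,w$ are continuous, $S$ is closed and bounded away from $0$; set $t^\ast = \inf S > 0$. By continuity and the definition of $t^\ast$, one has $v(t) < w(t)$ for all $t \in [0, t^\ast)$ together with $v(t^\ast) = w(t^\ast)$.

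Next I would plug $t = t^\ast$ into the two given inequalities and compare them termwise, using the monotonicity hypotheses on $g_1$ and $g_2$. Because $g_1(t^\ast,\cdot)$ is nondecreasing and $v(t^\ast)=w(t^\ast)$, we have
\[
  g_1(t^\ast, v(t^\ast)) = g_1(t^\ast, w(t^\ast)).
\]
Because $g_2(s,\cdot)$ is nondecreasing and $v(s) < w(s)$ for all $s \in [0,t^\ast)$, and because the kernel $(t^\ast - s)^{\alpha-1}$ is strictly positive on $(0,t^\ast)$, we obtain
\[
  \frac{1}{\Gamma(\alpha)}\int_0^{t^\ast} (t^\ast-s)^{\alpha-1} g_2(s,v(s))\,\diff s \;\le\; \frac{1}{\Gamma(\alpha)}\int_0^{t^\ast} (t^\ast-s)^{\alpha-1} g_2(s,w(s))\,\diff s.
\]
Adding these two bounds, using the first hypothesis on $v$ on the left and the second (strict) hypothesis on $w$ on the right, yields
\[
  v(t^\ast) \;\le\; g_1(t^\ast, w(t^\ast)) + \frac{1}{\Gamma(\alpha)}\int_0^{t^\ast} (t^\ast-s)^{\alpha-1} g_2(s,w(s))\,\diff s \;<\; w(t^\ast),
\]
which contradicts $v(t^\ast) = w(t^\ast)$ and completes the argument.

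The only delicate point, and the one I would double-check, is the existence and characterization of $t^\ast$: that continuity plus the strict initial gap $v(0)<w(0)$ actually forces $t^\ast>0$ and the equality $v(t^\ast)=w(t^\ast)$, rather than merely $v(t^\ast)\ge w(t^\ast)$. This is a standard consequence of continuity, but it is where the hypothesis of \emph{strict} inequality at $t=0$ (and not just $v(0)\le w(0)$) is used; without it the contradiction step above fails, since the strict sign $g_2(s,v(s)) < g_2(s,w(s))$ is not assumed and the propagation of strictness relies entirely on the strict inequality in the second hypothesis evaluated at $t^\ast$.
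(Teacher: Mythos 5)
Your proof is correct and is precisely the contradiction argument the paper has in mind (the paper omits the proof, noting only that it "can be done easily by contradiction"): you locate the first time $t^\ast$ where $v$ catches up to $w$, use the nonstrict monotonicity of $g_1,g_2$ together with $v<w$ on $[0,t^\ast)$ and $v(t^\ast)=w(t^\ast)$, and derive the contradiction from the strict inequality in the hypothesis on $w$. The point you flag at the end — that the strictness propagates from the second hypothesis rather than from $g_2$, and that $v(0)<w(0)$ forces $t^\ast>0$ and $v(t^\ast)=w(t^\ast)$ — is exactly right and handled correctly.
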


We now present a result that can be interpreted as an extension of \cite[Lemma 3.7]{MR1737503} to the fractional case. 
However, unlike the classical case, here we have the restriction that $\lambda \ge 0$ because we have to argue from a fractional integral inequality. Nevertheless, this is sufficient for our purposes.

\begin{Lemma}[fractional Gr\"onwall] \label{lemma:frac-Gronwall-diff-ineq}
Let $a \in C([0,T];\mR)$ with $D_c^{\alpha} a^2 \in L^1_{\textrm{loc}}([0,T);\mR)$, $b,c,d: [0,T] \to [0, +\infty]$ be measurable functions, and $\lambda \geq 0$. If the following differential inequality is satisfied
\begin{equation}\label{eq:frac-Gronwall-diff-ineq-assump}
  D_c^{\alpha} a^2(t) + b(t) \le 2 \lambda a^2(t) + c(t) + 2 d(t) a(t), \quad a.e. \;\; t \in (0,T),
\end{equation}
then we have
\[
  \l( \sup_{t \in [0,T]} a^2(t) + \frac{1}{\Gamma(\alpha)} \| b\|_{L^1_\alpha(0,T;\mR)} \r)^{1/2} \leq 2\wt{D}(T) E_{\alpha}(2\lambda T^{\alpha}) + 
  \sqrt{a^2(0) + \wt{C}(T)} \sqrt{E_{\alpha}(2\lambda T^{\alpha})}
\]
where
\begin{equation}\label{eq:frac-Gronwall-diff-ineq-def-CD}
  \wt{C}(t) = \frac1{\Gamma(\alpha)} \| c \|_{L^1_\alpha(0,t;\mR)}, \quad 
  \wt{D}(t) = \frac1{\Gamma(\alpha)} \| d \|_{L^1_\alpha(0,t;\mR)}.
\end{equation}
\end{Lemma}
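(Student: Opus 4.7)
The plan is to convert the differential inequality \eqref{eq:frac-Gronwall-diff-ineq-assump} into a Volterra-type integral inequality for $a^2$, introduce a nondecreasing scalar majorant $\Phi$ that controls both the running supremum of $a^2$ and the $b$-contribution on the left-hand side of the claim, and then invoke \Cref{lemma:frac-int-comparison} against a Mittag-Leffler super-solution. Since $D_c^\alpha a^2 \in L^1_{\mathrm{loc}}$, the Volterra identity \eqref{eq:Volterra-eq} applied to $a^2$, combined with \eqref{eq:frac-Gronwall-diff-ineq-assump}, gives, for every $t \in [0,T]$,
\[
a^2(t) + \frac{1}{\Gamma(\alpha)} \int_0^t (t-s)^{\alpha-1} b(s)\diff s \leq a^2(0) + \wt C(t) + \frac{2}{\Gamma(\alpha)} \int_0^t (t-s)^{\alpha-1} d(s) a(s)\diff s + \frac{2\lambda}{\Gamma(\alpha)} \int_0^t (t-s)^{\alpha-1} a^2(s)\diff s.
\]

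Set $\Phi(t) := \sup_{s \leq t}\bigl[a^2(s) + \tfrac{1}{\Gamma(\alpha)}\int_0^s (s-u)^{\alpha-1} b(u)\diff u\bigr]$, which is nondecreasing and dominates $a^2$. For $s \leq t$ one has $|a(s)| \leq \sqrt{\Phi(t)}$ and $a^2(s) \leq \Phi(s)$, so the mixed term is bounded by $2 \wt D(t) \sqrt{\Phi(t)}$ and the fractional integral of $a^2$ by that of $\Phi$. Taking the supremum in $r \in [0,t]$ of the resulting inequality --- each right-hand term is nondecreasing in $r$, for the fractional integral because its nonnegative integrand $\Phi$ is nondecreasing --- yields
\[
\Phi(t) \leq \bigl[a^2(0) + \wt C(t)\bigr] + 2 \wt D(t) \sqrt{\Phi(t)} + \frac{2\lambda}{\Gamma(\alpha)} \int_0^t (t-s)^{\alpha-1} \Phi(s)\diff s.
\]

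To turn this into the Mittag-Leffler bound, apply \Cref{lemma:frac-int-comparison} with $g_1(t,v) = a^2(0) + \wt C(T) + 2\wt D(T)\sqrt{v}$ and $g_2(s,v) = 2\lambda v$ (both nondecreasing in $v$). Fix $\epsilon > 0$ and try the ansatz
\[
w(t) := \Bigl[2\wt D(T)\, E_\alpha(2\lambda t^\alpha) + \sqrt{a^2(0) + \wt C(T) + \epsilon}\,\sqrt{E_\alpha(2\lambda t^\alpha)}\Bigr]^2,
\]
so that $\sqrt{w(T)}$ matches the claimed right-hand side up to the vanishing $\epsilon$. Writing $E(t) := E_\alpha(2\lambda t^\alpha)$, the eigenfunction identity \eqref{eq:GronwallIsExp} gives $\tfrac{2\lambda}{\Gamma(\alpha)}\int_0^t (t-s)^{\alpha-1} E(s)\diff s = E(t) - 1$; combined with the monotonicity bounds $\tfrac{2\lambda}{\Gamma(\alpha)}\int_0^t (t-s)^{\alpha-1} E(s)^2\diff s \leq E(t)(E(t)-1)$ and $\tfrac{2\lambda}{\Gamma(\alpha)}\int_0^t (t-s)^{\alpha-1} E(s)^{3/2}\diff s \leq \sqrt{E(t)}(E(t)-1)$ (both from the monotonicity of $E$), a direct expansion shows that $w(t) - 2\wt D(T)\sqrt{w(t)} - [a^2(0)+\wt C(T)] - \tfrac{2\lambda}{\Gamma(\alpha)}\int_0^t (t-s)^{\alpha-1} w(s)\diff s \geq \epsilon > 0$. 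Since also $\Phi(0) \leq a^2(0) < w(0)$, the comparison lemma yields $\Phi(t) < w(t)$ on $[0,T]$, and letting $\epsilon \downarrow 0$ completes the proof.

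The main obstacle is producing the exact argument $2\lambda$ inside $E_\alpha$ (rather than $4\lambda$), together with the exact coefficients $2$ and $1$ in front of $\wt D(T)$ and $\sqrt{a^2(0)+\wt C(T)}$: a naive splitting $(x+y)^2 \leq 2x^2+2y^2$ after completing the square in $\sqrt{\Phi(t)}$ would double the constant inside the Mittag-Leffler. The specific ``$2\wt D E_\alpha + \sqrt{\,\cdot\,}\sqrt{E_\alpha}$'' mixture in $w$, exploiting both the eigenfunction and the half-order behaviors of $E_\alpha$, is precisely what lets the three components $E^2$, $E^{3/2}$, $E$ of $w$ absorb $2\wt D(T)\sqrt{w}$, the additive $a^2(0)+\wt C(T)$, and the fractional integral of $w$ with matching constants.
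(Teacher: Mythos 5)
Your proof is correct and follows the same overall strategy as the paper's: pass to the Volterra integral inequality, majorize by a nondecreasing function, and run the comparison \Cref{lemma:frac-int-comparison} against a Mittag--Leffler super-solution. Two points of comparison are worth recording. First, the paper's majorant is simply $\wt{a}(t)=\max_{s\le t}a(s)$, and the $b$-term is recovered only at the very end by substituting the bound $\wt{a}\le e$ back into the integral inequality; your $\Phi$ carries the $b$-integral along inside the supremum. This is slightly more economical but creates a small technical wrinkle: \Cref{lemma:frac-int-comparison} requires $v\in C([0,T];\mR)$, and for a merely measurable nonnegative $b$ the fractional integral $s\mapsto\int_0^s(s-u)^{\alpha-1}b(u)\,\diff u$ (hence $\Phi$) need not be continuous. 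Since $b\ge 0$, the fix is exactly the paper's: drop the $b$-term from the majorant, bound $\sup_{s\le t}a^2(s)$ by the barrier, and reinstate the $b$-term afterwards. Second, your barrier $w=\bigl[2\wt{D}(T)E_\alpha(2\lambda t^\alpha)+\sqrt{a^2(0)+\wt{C}(T)+\epsilon}\,\sqrt{E_\alpha(2\lambda t^\alpha)}\bigr]^2$ is built to match the claimed constants directly, and your verification via the eigenfunction identity together with $E^2\le E(t)E$ and $E^{3/2}\le\sqrt{E(t)}\,E$ indeed leaves a surplus of $2\wt{D}(T)\sqrt{a^2(0)+\wt{C}(T)+\epsilon}\,\sqrt{E}+\epsilon\ge\epsilon$; the paper instead uses the simpler barrier $K\sqrt{E_\alpha(2\lambda t^\alpha)}$ with a constant $K$, and obtains the stated constants by the elementary inequality $\sqrt{x+y}\le\sqrt{x}+\sqrt{y}$ at the end. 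Both routes are valid; yours trades a one-line algebraic clean-up at the end for a slightly more elaborate super-solution check.
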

\begin{proof}
From \eqref{eq:frac-Gronwall-diff-ineq-assump} we obtain that
\begin{equation}\label{eq:lemma:frac-Gronwall-diff-ineq-proof1}
\begin{aligned}
  a^2(t) + \frac{1}{\Gamma(\alpha)} \int_0^t (t-s)^{\alpha-1} b(s) \diff s
  & \leq a^2(0) + \frac{1}{\Gamma(\alpha)} \int_0^t (t-s)^{\alpha-1} \l[ c(s) + 2 d(s) a(s) + 2 \lambda a^2(s) \r] \diff s \\
  & \leq a^2(0) + \wt{C}(t) + 2 \wt{a}(t) \wt{D}(t) + \frac{2\lambda}{\Gamma(\alpha)} \int_0^t (t-s)^{\alpha-1} \wt{a}^2(s) \; \diff s,
\end{aligned}
\end{equation}
where $\wt{a}(t) = \max_{0 \le s \le t} a(s)$ and the functions $\wt{C}, \wt{D}$ are defined in \eqref{eq:frac-Gronwall-diff-ineq-def-CD}. This immediately implies that
\[
  \wt{a}^2(t) \leq a^2(0) + \wt{C}(t) + 2 \wt{a}(t) \wt{D}(t) + \frac{2\lambda}{\Gamma(\alpha)} \int_0^t (t-s)^{\alpha-1} \wt{a}^2(s)  \diff s.
\]
In order to bound $\wt{a}$, we construct a barrier function $e(t) = K \sqrt{E_{\alpha}(2 \lambda t^{\alpha})}$ where the constant $K$ is chosen so that
\[
  e^2(t) > a^2(0) + \wt{C}(t) + 2 e(t) \wt{D}(t) + \frac{2\lambda}{\Gamma(\alpha)} \int_0^t (t-s)^{\alpha-1} e^2(s) \diff s, \quad \forall t \in (0,T).
\]
Indeed, owing to \eqref{eq:GronwallIsExp} we see that
\[
  \frac{2\lambda}{\Gamma(\alpha)} \int_0^t (t-s)^{\alpha-1} E_{\alpha}(2 \lambda s^{\alpha}) \; \diff s = E_{\alpha}(2 \lambda t^{\alpha}) - E_{\alpha}(0)
  = E_{\alpha}(2 \lambda t^{\alpha}) - 1
\]
and hence
\[
  \begin{aligned}
    & a^2(0) + \wt{C}(t) + e(t) \wt{D}(t) + \frac{2\lambda}{\Gamma(\alpha)} \int_0^t (t-s)^{\alpha-1} e^2(s) \; \diff s \\
    &=  a^2(0) + \wt{C}(t) + 2 K \sqrt{E_{\alpha}(2 \lambda t^{\alpha})} \wt{D}(t)
    + K^2 \l( E_{\alpha}(2 \lambda t^{\alpha}) - 1 \r) < K^2 E_{\alpha}(2 \lambda t^{\alpha}) = e^2(t),
  \end{aligned}
\]
for every $t \in (0,T)$ provided that
\begin{equation}\label{eq:lemma:frac-Gronwall-diff-ineq-proof2}
  K > \wt{D}(T) \sqrt{E_{\alpha}(2\lambda T^{\alpha})} + \sqrt{a^2(0) + \wt{C}(T) + \wt{D}^2(t) E_{\alpha}(2 \lambda T^{\alpha}) }.
\end{equation}
Applying \Cref{lemma:frac-int-comparison} we obtain that
\[
  \wt{a}(t) \le e(t) = K \sqrt{E_{\alpha}(2 \lambda t^{\alpha})}.
\]
Plugging this back into \eqref{eq:lemma:frac-Gronwall-diff-ineq-proof1} and noticing that this holds for any $K$ satisfying \eqref{eq:lemma:frac-Gronwall-diff-ineq-proof2} we obtain that
\[
  \begin{aligned}
    & \sup_{t \in [0,T]} a^2(t) + \frac{1}{\Gamma(\alpha)} \int_0^t (t-s)^{\alpha-1} b(s) \diff s \\ 
    & \leq \l( \wt{D}(T) \sqrt{E_{\alpha}(2\lambda T^{\alpha})} + 
    \sqrt{a^2(0) + \wt{C}(T) + \wt{D}^2(t) E_{\alpha}(2 \lambda T^{\alpha}) } \r)^2 E_{\alpha}(2\lambda T^{\alpha}) \\
    & \leq \l( 2\wt{D}(T) E_{\alpha}(2\lambda T^{\alpha}) + 
    \sqrt{a^2(0) + \wt{C}(T)} \sqrt{E_{\alpha}(2\lambda T^{\alpha})} \r)^2
  \end{aligned}
\]
which is the desired result.
\end{proof}

\section{Deconvolutional discretization of the Caputo derivative} \label{sec:deconvo}

To discretize the Caputo fractional derivative, references \cite{LiLiuNoteDeconvo-18, LiLiuGradientFlow-19} consider a so-called deconvolutional scheme on uniform time grids and prove some properties of this discretization. In this section, we generalize this deconvolutional scheme to the variable time step setting, and prove properties that will be useful in deriving a posteriori error estimates later, in Section~\ref{sec:post-error}. 

\subsection{The discrete Caputo derivative}
Let $\mP$ be a partition as in \eqref{eq:def-partition}. To motivate this discretization, let us assume that $w:[0,T] \to \mH$ is such that $D_c^{\alpha} w(t)$ is piecewise constant on the partition $\mP$, with
\[
  D_c^{\alpha} w(t) = V_{n(t)}.
\]
Then formally by \eqref{eq:Volterra-eq}, we have
\begin{equation}\label{eq:utn_Fi}
\begin{aligned}
  w(t_n) &= w(0) + \dfrac{1}{\Gamma(\alpha)} \int_0^{t_n} (t_n-s)^{\alpha-1} D_c^{\alpha} w(s) \diff s\\
  &= w(0) + \dfrac{1}{\Gamma(\alpha+1)} \sum_{i=1}^n \l( (t_n - t_{i-1})^{\alpha} - (t_n - t_{i})^{\alpha} \r) V_i, \quad n \in \{1,\ldots, N\}.
\end{aligned}
\end{equation}

Let $\bK_\mP \in \mR^{N \times N}$ be the matrix induced by the partition $\mP$, which is defined as
\begin{equation}\label{eq:def-K}
  \bK_{\mP,ni} 
  = 
  \begin{dcases}
  \dfrac{1}{\Gamma(\alpha+1)}\Big( (t_n - t_{i-1})^{\alpha} - (t_n - t_{i})^{\alpha} \Big), & 1 \le i \le n \le N, \\
  0, & 1 \le n < i \le N.
  \end{dcases} 
\end{equation}
Then we can rewrite \eqref{eq:utn_Fi} in matrix form as
\[
  \bW = \bW_0 + \bK_{\mP} \bV,
\]
where $\bV, \bW, \bW_0 \in \mH^N$ with $\bV_n = V_n$, $\bW_n = w(t_n)$, and $(\bW_0)_n = w(0)$.
Notice that $\bK_{\mP}$ is lower triangular and all the elements on and below the main diagonal are positive. Therefore $\bK_{\mP}$ is invertible and its inverse is also lower triangular. Thus, the previous identity is equivalent to
\[
  \bV = \bK^{-1}_{\mP} (\bW - \bW_0),
\]
in other words
\[
  V_n = \sum_{i=1}^n \bK^{-1}_{\mP, ni} (W_i - W_0) = \bK^{-1}_{\mP, n0} W_0 + \sum_{i=1}^n \bK^{-1}_{\mP, ni} W_i, 
\]
where we set $\bK^{-1}_{\mP,n0} = -\sum_{j=1}^n \bK^{-1}_{\mP,nj}$. This motivates the following approximation of the Caputo derivative provided $\bW \in \mH^N$ and $W_0 \in \mH$ are given. For $n \in \{1, \ldots, N\}$ we set
\begin{equation}\label{eq:discrete-Caputo}
  \l( D_{\mP}^{\alpha} \bW \r)_n = \sum_{i=1}^n \bK^{-1}_{\mP, ni} (W_i - W_0) = 
\sum_{i=0}^n \bK^{-1}_{\mP, ni} W_i
= \sum_{i=0}^{n-1} \bK^{-1}_{\mP, ni} (W_i - W_n).
\end{equation}

\subsection{Properties of $\bK_{\mP}^{-1}$}
We note that, when the partition is uniform, both $\bK_{\mP}$ and its inverse will be Toeplitz matrices, and hence the product $\bK_\mP \bV$ can be interpreted as the convolution of sequences. Consequently, multiplication by $\bK_\mP^{-1}$ is equivalent to taking a sequence deconvolution. This motivates the name of this scheme and enables \cite{LiLiuGradientFlow-19} to apply techniques for the deconvolution of a completely monotone sequence and prove properties of $\bK_{\mP}^{-1}$.

We were not successful in extending, to a general partition $\mP$, all the properties of $\bK_\mP^{-1}$ presented in \cite{LiLiuGradientFlow-19} for the case when the partition is uniform. This is mainly because their techniques are based on ideas that rely on completely monotone sequences, which do not easily extend to a general $\mP$. Nevertheless we have obtained sufficient, for our purposes, properties. The following result is the counterpart to \cite[Proposition 3.2(1)]{LiLiuGradientFlow-19}.

\begin{Proposition}[properties of $\bK_\mP^{-1}$]
\label{prop:mat_prop1}
Let $\mP$ be a partition as in \eqref{eq:def-partition}, and $\bK_{\mP}$ be defined in \eqref{eq:def-K}. The matrix $\bK_\mP$ is invertible, and its inverse satisfies: 
\begin{eqnarray}
  \bK^{-1}_{\mP,n0} = -\sum_{j=1}^n  \bK^{-1}_{\mP,nj} < 0, \quad n \in \{1, \ldots, N\} \label{eq:mat_prop1-1}, \\
  \bK^{-1}_{\mP,ii} > 0 \quad i \in \{1, \ldots, N\}, \quad \bK^{-1}_{\mP,ni} < 0 \quad 1 \le i < n \le N. \label{eq:mat_prop1-2} 
\end{eqnarray}
\end{Proposition}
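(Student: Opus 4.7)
The statement naturally splits into three sign claims, the first of which is essentially free: since $\bK_\mP$ is lower triangular with strictly positive diagonal $\bK_{\mP,ii} = \tau_i^\alpha/\Gamma(\alpha+1) > 0$, it is invertible and $\bK_\mP^{-1}$ is lower triangular with $\bK_{\mP,ii}^{-1} = 1/\bK_{\mP,ii} > 0$, which settles the diagonal part of \eqref{eq:mat_prop1-2}. The equality $\bK_{\mP,n0}^{-1} = -\sum_{j=1}^n \bK_{\mP,nj}^{-1}$ in \eqref{eq:mat_prop1-1} is the convention adopted right after \eqref{eq:discrete-Caputo}, so what remains to prove are the off-diagonal negativity $\bK_{\mP,ni}^{-1} < 0$ for $1 \le i < n$ and the strict positivity of $S_n := \sum_{j=1}^n \bK_{\mP,nj}^{-1}$, which in turn gives the strict inequality in \eqref{eq:mat_prop1-1}.

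For the off-diagonal claim I would argue by induction on $n$, computing row $n$ of $\bK_\mP^{-1}$ by back-substitution in the system $\sum_{k=i}^n \bK_{\mP,nk}^{-1} \bK_{\mP,ki} = \delta_{ni}$. The case $i=n$ gives $\bK_{\mP,nn}^{-1} = 1/\bK_{\mP,nn} > 0$ and the case $i = n-1$ follows immediately since only $k=n$ contributes. For $1 \le i < n-1$ the recursion
\[
  \bK_{\mP,ni}^{-1} = -\frac{1}{\bK_{\mP,ii}} \sum_{k=i+1}^n \bK_{\mP,nk}^{-1} \bK_{\mP,ki}
\]
pits one positive contribution at $k = n$ (weight $\bK_{\mP,nn}^{-1} > 0$) against the negative contributions for $i < k < n$ (by the inductive hypothesis), so a naive sign check is delicate. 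A cleaner equivalent route is Cramer's rule: $\bK_{\mP,ni}^{-1} = (-1)^{n+i} M_{in}/\det \bK_\mP^{(n)}$, where $\bK_\mP^{(n)}$ is the top-left $n\times n$ block, $\det \bK_\mP^{(n)} = \prod_k \bK_{\mP,kk} > 0$, and $M_{in}$ is the minor obtained by deleting row $i$ and column $n$. The task reduces to showing $M_{in}$ has sign $(-1)^{n+i+1}$, and a careful expansion of the staircase-shaped matrix together with the strict concavity inequality
\[
  (b+c)^\alpha - (a+c)^\alpha < b^\alpha - a^\alpha, \qquad 0 \le a < b,\ c > 0,\ \alpha \in (0,1),
\]
i.e., strict concavity of $G(s) = s^\alpha/\Gamma(\alpha+1)$, supplies the required sign. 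Equivalently, this concavity says that $\bK_{\mP,k,i}$ is strictly decreasing in $k$ for fixed $i$, a monotonicity that drives both this step and the next.

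The positivity of $S_n$ admits a cleaner stand-alone forward induction on $n$: $S_n$ is the $n$-th coordinate of $\bV = \bK_\mP^{-1}\mathbf{1}$, i.e., the unique solution of the triangular system $\bK_\mP \bV = \mathbf{1}$. The base $V_1 = 1/\bK_{\mP,11} > 0$ is immediate, and at step $n$ the identity $\bK_{\mP,nn} V_n = 1 - \sum_{i=1}^{n-1} \bK_{\mP,ni} V_i$ combines with the monotonicity $\bK_{\mP,ni} < \bK_{\mP,n-1,i}$ above and with the inductive identity $\sum_{i=1}^{n-1} \bK_{\mP,n-1,i} V_i = 1$ (using $V_i > 0$) to give $\sum_{i=1}^{n-1} \bK_{\mP,ni} V_i < 1$, hence $V_n > 0$. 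The main obstacle throughout is the off-diagonal step: the sign-alternation in the back-substitution produces a genuine competition between one positive and several negative contributions, and closing the induction requires the concavity of $G$ to be used quantitatively rather than qualitatively. In the uniform-grid case treated in \cite{LiLiuGradientFlow-19} the Toeplitz structure enables completely-monotone-sequence techniques that bypass this difficulty, but these techniques are unavailable in the general non-uniform setting considered here.
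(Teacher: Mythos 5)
Your treatment of invertibility, the diagonal entries, and the strict positivity of $S_n = \sum_{j=1}^n \bK^{-1}_{\mP,nj}$ is correct and is essentially the paper's argument for \eqref{eq:mat_prop1-1}: induct on $n$ via $\bK_\mP \bV = \mathbf{1}$, using that $\bK_{\mP,ni} < \bK_{\mP,n-1,i}$ (concavity of $s \mapsto s^\alpha$) together with the inductive positivity of the $V_i$.

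The off-diagonal claim $\bK^{-1}_{\mP,ni} < 0$ for $i < n$, however, is where your proposal has a genuine gap, and you have misidentified the structural property that makes it work. You assert that the monotonicity $\bK_{\mP,k+1,i} < \bK_{\mP,k,i}$ (equivalently, concavity of $s^\alpha$, equivalently $g' < 0$ for the kernel $g(s) = s^{\alpha-1}$) ``drives both this step and the next.'' It does not drive the off-diagonal step. In the inductive argument with $\bW = e_i$ and $\bF = \bK_\mP^{-1}\bW$, one must show $\sum_{j=i}^{k}\bK_{\mP,k+1,j}F_j > 0$ knowing only $\sum_{j=i}^{k}\bK_{\mP,k,j}F_j = 0$, $F_i>0$, and $F_j<0$ for $i<j\le k$. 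Subtracting the multiple $(\bK_{\mP,k+1,i}/\bK_{\mP,k,i})$ of the vanishing sum eliminates the one positive term, and what remains is controlled precisely when the ratios satisfy
\[
  \frac{\bK_{\mP,k+1,i}}{\bK_{\mP,k,i}} > \frac{\bK_{\mP,k+1,j}}{\bK_{\mP,k,j}}, \qquad i<j\le k,
\]
i.e., when the column ratios are decreasing in the column index. This is a log-convexity property of the kernel ($(\ln g)''>0$), logically independent of $g'<0$, and the paper proves it via Cauchy's mean value theorem, producing $\bK_{\mP,k+1,i}/\bK_{\mP,k,i} = ((\eta+\tau_{k+1})/\eta)^{\alpha-1}$ with $\eta$ in the appropriate interval and then comparing the resulting expressions in $\eta$ and $\xi$. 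Your alternative route via Cramer's rule and ``a careful expansion of the staircase-shaped matrix'' is not carried out, and since the sign of the minor $M_{in}$ encodes exactly the same competition between one positive and several negative contributions, it cannot be settled by the concavity inequality alone. To close the argument you need to isolate and prove the ratio inequality above (or an equivalent log-convexity statement) and then run the induction on $n$ for each fixed $i$.
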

\begin{proof}
We already showed that $\bK_\mP$ is nonsingular. We prove \eqref{eq:mat_prop1-1} and \eqref{eq:mat_prop1-2} separately.

First, to prove that $\bK^{-1}_{\mP,n0} < 0$. For this, it suffices to show that for a vector $\bW \in \mR^N$ such that $W_i = 1$ for any $i \ge 1$, then the vector $\bF = \bK_{\mP}^{-1}\bW$ satisfies
\[
  F_n > 0 \quad \forall n \ge 1.
\]
We prove this by induction on $n$. For $n = 1$, clearly
\[
  F_1 = \frac{W_1}{\bK_{\mP,1,1}} = \frac{1}{\bK_{\mP,1,1}} > 0.
\]
Suppose that $F_j > 0$ for all $1 \le j \le k$, now we want to show that $F_{k+1} > 0$ as well. Notice that
\[
  1 = W_k = \sum_{j=1}^k \bK_{\mP,k,j}F_j, \quad 1 = W_{k+1} = \sum_{j=1}^{k+1} \bK_{\mP,k+1,j}F_j,
\]
then taking the difference we have
\begin{equation}\label{eq:mat_prop1-proof2}
  0 = \sum_{j=1}^{k+1} \bK_{\mP,k+1,j}F_j - \sum_{j=1}^k \bK_{\mP,k,j}F_j = \bK_{\mP,k+1,k+1}F_{k+1} + \sum_{j=1}^k (\bK_{\mP,k+1,j}-\bK_{\mP,k,j}) F_j.
\end{equation}
We claim that $\bK_{\mP,k+1,j}-\bK_{\mP,k,j} < 0$ for any $j$. In fact, this can be seen through the definition of the entries of $\bK_\mP$
\[
  \begin{aligned}
    \bK_{\mP,k+1,j}-\bK_{\mP,k,j} < 0 & \iff (t_{k+1} - t_{j-1})^{\alpha} - (t_{k+1} - t_{j})^{\alpha} < (t_k - t_{j-1})^{\alpha} - (t_k - t_{j})^{\alpha}\\
    &\iff
    \int_{0}^{t_j - t_{j-1}} (t_{k+1} - t_j + s)^{\alpha -1} \diff s
    < \int_{0}^{t_j - t_{j-1}} (t_{k} - t_j + s)^{\alpha -1} \diff s.
  \end{aligned}
\]
Using $\bK_{\mP,k+1,j}-\bK_{\mP,k,j} < 0$ and $F_j > 0$ for all $j \in \{1, \ldots, k\}$ in \eqref{eq:mat_prop1-proof2}, we see that $\bK_{\mP,k+1,k+1}F_{k+1} > 0$ and thus $F_{k+1} > 0$.
Therefore by induction we proved that $\bK^{-1}_{\mP,n0} < 0$ for $n \ge 1$.

Next, we prove that $\bK^{-1}_{\mP,ii} > 0$ and $\bK^{-1}_{\mP,ni} < 0$. Consider a vector $\bW \in \mR^N$ that is such that $W_i = 1$ and $W_j = 0$ for $j \neq i$. It suffices to prove that for, $\bF = \bK^{-1}_\mP \bW$, we have $F_i > 0$ and if $n>i$
\begin{equation}\label{eq:mat_prop1-proof1}
  F_n < 0.
\end{equation}
Since $\bK_\mP^{-1}$ is lower triangular, we know $F_j = 0$ for $j \in \{1, \ldots, i-1\}$. From $\bK_\mP \bF = \bW$, we see that
\[
  1 = W_i = (\bK_\mP \bF)_i = \sum_{j=1}^i \bK_{\mP,ij} F_j = \bK^{-1}_{\mP,ii} F_i
\]
and thus $F_i = 1/\bK_{\mP,ii} > 0$. Now we prove by induction that \eqref{eq:mat_prop1-proof1} holds. First, when $n = i+1$, we have
\[
  0 = W_{i+1} = (\bK_\mP \bF)_{i+1} = \bK_{\mP,i+1,i} F_i + \bK_{\mP,i+1,i+1} F_{i+1}
\]
and hence
\[
  F_{i+1} = -\frac{ \bK_{\mP,i+1,i} F_i }{ \bK_{\mP,i+1,i+1} } < 0.
\]
This shows that \eqref{eq:mat_prop1-proof1} is true for $n=i+1$. Now suppose that we have already shown that $F_n < 0$ for $n$ satisfying $n \in \{i+1, \ldots, k\}$, we want to prove $F_{k+1} < 0$. To this aim, notice that
\[
  0 = W_{k+1} = (\bK_{\mP}\bF)_{k+1} = \sum_{j=i}^{k} \bK_{\mP,k+1,j}F_j + \bK_{\mP,k+1,k+1} F_{k+1},
\]
therefore we only need to show $\sum_{j=i}^{k} \bK_{\mP,k+1,j}F_j > 0$. Recall that
\[
  0 = W_k = (\bK_\mP \bF)_{k} = \sum_{j=i}^{k} \bK_{\mP,k,j}F_j,
\]
and thus, since $\bK_{\mP,k,i} > 0$, we can get
\[
  \sum_{j=i}^{k} \bK_{\mP,k+1,j}F_j = \sum_{j=i}^{k} \bK_{\mP,k+1,j}F_j - \frac{\bK_{\mP,k+1,i}}{\bK_{\mP,k,i}} \sum_{j=i}^{k} \bK_{\mP,k,j}F_j = \sum_{j=i+1}^{k} \l( \bK_{\mP,k+1,j} - \frac{\bK_{\mP,k+1,i}}{\bK_{\mP,k,i}}\bK_{\mP,k,j} \r) F_j.
\]
Since by the induction hypothesis $F_j < 0$ for $j \in \{i+1, \ldots, k\}$, it only remains to show that
\[
  \bK_{\mP,k+1,j} - \frac{\bK_{\mP,k+1,i}}{\bK_{\mP,k,i}}\bK_{\mP,k,j} < 0 \; \iff \;
  \frac{\bK_{\mP,k+1,i}}{\bK_{\mP,k,i}} > \frac{\bK_{\mP,k+1,j}}{\bK_{\mP,k,j}}.
\]
Applying Cauchy's mean value theorem, there exists $\eta \in (t_k - t_i, t_k-t_{i-1})$
such that
\[
  \frac{\bK_{\mP,k+1,i}}{\bK_{\mP,k,i}} = \frac{(t_{k+1} - t_{i-1})^{\alpha} - (t_{k+1} - t_{i})^{\alpha}}{(t_k - t_{i-1})^{\alpha} - (t_k - t_{i})^{\alpha}} = \frac{ \alpha(\eta + \tau_{k+1})^{\alpha-1} }{ \alpha \eta^{\alpha-1} } = \l( \frac{\eta + \tau_{k+1}}{\eta} \r)^{\alpha-1}.
\]
Similarly there exists $\xi \in (t_k - t_j, t_k-t_{j-1})$ such that
\[
  \frac{\bK_{\mP,k+1,j}}{\bK_{\mP,k,j}} = \l( \frac{\xi + \tau_{k+1}}{\xi} \r)^{\alpha-1}.
\]
Due to $j > i$, we have $\xi < \eta$ and hence
\[
  \frac{\bK_{\mP,k+1,j}}{\bK_{\mP,k,j}} = \l( \frac{\xi + \tau_{k+1}}{\xi} \r)^{\alpha-1} < \l( \frac{\eta + \tau_{k+1}}{\eta} \r)^{\alpha-1} = \frac{\bK_{\mP,k+1,i}}{\bK_{\mP,k,i}}.
\]
Therefore from the arguments above we see that $F_{k+1} < 0$, and by induction $\bK^{-1}_{\mP,ni} < 0$ for $n > i$.
\end{proof}

\begin{Remark}[generalization]
The discretization of the Caputo derivative, described in \eqref{eq:discrete-Caputo}, and its properties presented in Proposition~\ref{prop:mat_prop1} can be extended to more general kernels. Indeed, for a general convolutional kernel $g \in L^1(0,T;\mR)$ the entries of the matrix $\bK_\mP$ will be 
\[
  \bK_{\mP,ni} = \int_{t_n - t_{i-1}}^{t_n - t_i} g(t) \diff t.
\]
The proof of \eqref{eq:mat_prop1-1} follows \emph{verbatim} provided $g'(t) < 0$, as the reader can readily verify. The proof of \eqref{eq:mat_prop1-2} only requires that the function $G(t) = \ln(g(t))$, satisfies $G''(t) > 0$.
\end{Remark}

For a uniform time grid $\mP$, \cite[Theorem 2.3]{LiLiuNoteDeconvo-18} proves that, for every $i$, the sequence $\{-\bK^{-1}_{\mP,n+i,i} \}_{n \ge 1}$ is completely monotone. The following result holds for a general partition $\mP$, and is a direct consequence of \cite[Theorem 2.3]{LiLiuNoteDeconvo-18} for uniform time stepping.

\begin{Proposition}[monotonicity]\label{prop:mat_prop2}
Let $\mP$ be a partition of $[0,T]$ as in \eqref{eq:def-partition}, and $\bK_\mP$ be defined as in \eqref{eq:def-K}. Then, its inverse satisfies:
\begin{enumerate}[1.]
  \item For $n \in \{1, \ldots, N-1\}$,
  \begin{equation}\label{eq:mat_prop2-1}
    -\sum_{j=1}^n \bK^{-1}_{\mP,nj} = \bK^{-1}_{\mP,n0} < \bK^{-1}_{\mP,n+1,0} = -\sum_{j=1}^{n+1} \bK^{-1}_{\mP,n+1, j}.
  \end{equation}
  
  \item For $1 \le i < n < N$,
  \begin{equation}\label{eq:mat_prop2-2}
    \bK^{-1}_{\mP,ni} < \bK^{-1}_{\mP,n+1,i}.
  \end{equation}
\end{enumerate}
\end{Proposition}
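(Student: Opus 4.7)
The plan is to prove both inequalities by induction on $n$, by applying $\bK_\mP^{-1}$ to carefully chosen test vectors and exploiting the monotonicity of the ratio $\bK_{\mP,n+1,j}/\bK_{\mP,n,j}$ in $j$ that is implicit in the proof of \Cref{prop:mat_prop1}. Specifically, Cauchy's mean value theorem yields
\[
  \frac{\bK_{\mP,n+1,j}}{\bK_{\mP,n,j}} = \left(1 + \frac{\tau_{n+1}}{\eta_j}\right)^{\alpha-1}, \qquad \eta_j \in (t_n - t_j, t_n - t_{j-1}),
\]
and since $\alpha - 1 < 0$ and $\eta_j$ decreases as $j$ increases, this ratio is strictly decreasing in $j \in \{1,\ldots,n\}$.

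For \eqref{eq:mat_prop2-2} I fix $i$ and let $\bF = \bK_\mP^{-1} e_i \in \mR^N$, so $F_j = \bK^{-1}_{\mP, j, i}$; from \Cref{prop:mat_prop1} we have $F_i > 0$ and $F_j < 0$ for $j > i$, so the claim reduces to strict monotonicity $F_{i+1} < F_{i+2} < \cdots$. Starting from the equation $\sum_{j=i}^{n+1}\bK_{\mP,n+1,j}F_j = 0$, I eliminate $F_i$ by subtracting $(\bK_{\mP,n+1,i}/\bK_{\mP,n,i})$ times $\sum_{j=i}^n \bK_{\mP,n,j}F_j = 0$, obtaining
\[
  \bK_{\mP,n+1,n+1} F_{n+1} = \sum_{j=i+1}^n \left[\frac{\bK_{\mP,n+1,i}}{\bK_{\mP,n,i}}\bK_{\mP,n,j} - \bK_{\mP,n+1,j}\right] F_j,
\]
where every bracket is strictly positive by the ratio monotonicity. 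A parallel elimination at level $n$ (removing $F_i$ from the identity at step $n$ using the identity at step $n-1$, valid for $n > i+1$) expresses $\bK_{\mP,n,n} F_n$ as an analogous positive-coefficient combination of $F_{i+1},\ldots,F_{n-1}$. Comparing these two representations and invoking the induction hypothesis $F_{i+1} < \cdots < F_n < 0$ together with a further application of the ratio monotonicity produces a strictly positive expression for $\bK_{\mP,n+1,n+1}(F_{n+1}-F_n)$. The base case $n = i+1$ is handled by direct substitution of the explicit formulas for $F_{i+1}$ and $F_{i+2}$, and reduces after cross-multiplication to the algebraic identity $(t_i - t_{i-1})(t_{i+2}-t_{i+1}) > 0$.

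Claim \eqref{eq:mat_prop2-1} follows from an analogous argument with $\bF = \bK_\mP^{-1}(1,\ldots,1)^\top$, so that $F_n = -\bK^{-1}_{\mP,n,0} > 0$ and the claim becomes strict decrease $F_n > F_{n+1}$. Subtracting the identities $\sum_{j=1}^n \bK_{\mP,n,j}F_j = 1$ and $\sum_{j=1}^{n+1}\bK_{\mP,n+1,j}F_j = 1$ yields
\[
  \bK_{\mP,n+1,n+1}F_{n+1} = \sum_{j=1}^n (\bK_{\mP,n,j} - \bK_{\mP,n+1,j})F_j,
\]
where the coefficients are strictly positive by the sign computation in the proof of \Cref{prop:mat_prop1}; combined with the analogous representation of $\bK_{\mP,n,n}F_n$ obtained by subtracting the identities at levels $n-1$ and $n$, the induction hypothesis $F_1 > \cdots > F_n > 0$ and the ratio monotonicity give the desired inequality. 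The main technical obstacle appears in \eqref{eq:mat_prop2-2}: the naive expression for $F_{n+1}-F_n$ mixes contributions of different signs because $F_i > 0$ while the remaining $F_j$ are negative, and removing $F_i$ before invoking the induction hypothesis is precisely what converts the comparison into a sign-definite expression.
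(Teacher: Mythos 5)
Your overall strategy (test vectors $e_i$ and $(1,\dots,1)^\top$, induction on $n$, eliminating one unknown by combining two consecutive rows) matches the paper's, and your base cases are correct. But the inductive step has a genuine gap: the only analytic input you invoke is the two\emph{-row} ratio monotonicity $j \mapsto \bK_{\mP,n+1,j}/\bK_{\mP,n,j}$ from the proof of \Cref{prop:mat_prop1}, and that is not enough to close the argument. After your elimination you are left comparing
\[
F_{n+1}=\frac{1}{\bK_{\mP,n+1,n+1}}\sum_{j=i+1}^{n}A^{(n)}_jF_j
\quad\text{with}\quad
F_{n}=\frac{1}{\bK_{\mP,n,n}}\sum_{j=i+1}^{n-1}A^{(n-1)}_jF_j,
\]
two weighted sums of the same negative numbers with \emph{different} positive weights and \emph{different} normalizations $\bK_{\mP,n+1,n+1}=\tau_{n+1}^{\alpha}/\Gamma(\alpha+1)$ and $\bK_{\mP,n,n}=\tau_n^{\alpha}/\Gamma(\alpha+1)$, which for a nonuniform partition are in no fixed relation. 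Writing $\bK_{\mP,n+1,n+1}(F_{n+1}-F_n)=A^{(n)}_nF_n+\sum_{j}\bigl(A^{(n)}_j-\tfrac{\bK_{\mP,n+1,n+1}}{\bK_{\mP,n,n}}A^{(n-1)}_j\bigr)F_j$, the leading term $A^{(n)}_nF_n$ is strictly negative, so you need a quantitative comparison of the coefficients across \emph{three} consecutive rows $n-1,n,n+1$ — not a sign statement that two-row ratio monotonicity delivers. "Comparing the two representations" is precisely the step that is being asserted rather than proved.

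The paper avoids this by Abel-summing the identities into the form $W_k=\sum_{i}(t_k-t_i)^{\alpha}(F_{i+1}-F_i)$ and running the induction on the \emph{differences} $F_{j+1}-F_j$, which have definite signs; eliminating the problematic term then reduces everything to a single sign-definite sum, at the price of the new three-level inequalities \eqref{eq:mat_prop2-proof2} and \eqref{eq:mat_prop2-proof6}. These state that the ratio of successive row-differences, $\bigl((t_{k+1}-t_j)^{\alpha}-(t_k-t_j)^{\alpha}\bigr)/\bigl((t_k-t_j)^{\alpha}-(t_{k-1}-t_j)^{\alpha}\bigr)$ (and a weighted variant for part 2), is monotone in the column index $j$; each requires its own monotone-function argument via Cauchy's mean value theorem applied to $\ln h$, and neither follows from the ratio $\bK_{\mP,n+1,j}/\bK_{\mP,n,j}$ being decreasing in $j$. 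To repair your proof you would need to identify and prove these (or equivalent) three-level inequalities; as written, the proposal is missing the technical heart of the proposition.
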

\begin{proof}
To prove \eqref{eq:mat_prop2-1} it suffices to show that for a vector $\bW \in \mR^N$ such that $W_i = 1$ for any $i \ge 1$, then the vector $\bF = \bK^{-1}_\mP \bW$ satisfies
\[
  F_n > F_{n+1} \quad \forall n \ge 1.
\]
We prove this by induction on $n$. For $n = 1$,
\[
  \begin{aligned}
    1 &= W_1 = (\bK_\mP \bF)_1 = \bK_{\mP,11} F_1, \\
    1 &= W_2 = (\bK_\mP \bF)_2 = \bK_{\mP,21} F_1 + \bK_{\mP,22} F_2 = (\bK_{\mP,21}+\bK_{\mP,22}) F_1 + \bK_{\mP,22} (F_2-F_1).
  \end{aligned}
\]
Clearly,
\[
  F_1 > 0, \quad \bK_{\mP,11} = (t_1 - t_0)^{\alpha} < (t_2 - t_0)^{\alpha} = \bK_{\mP,21}+\bK_{\mP,22}.
\]
Hence we have
\[
  \bK_{\mP,22} (F_2-F_1) = 1 - (\bK_{\mP,21}+\bK_{\mP,22}) F_1 <  1 - \bK_{\mP,11} F_1 = 0,
\]
which, since $\bK_{\mP,22}>0$, implies that $F_2 - F_1 < 0$, i.e. $F_1 > F_2$. So the claim holds for $n = 1$.

Suppose $F_{j+1} < F_j$ for all $1 \le j < k$, now we want to show that $F_{k+1} < F_k$ as well. Notice that
\[
  \begin{aligned}
    1 &= W_k = \sum_{i=1}^{k} \bK_{\mP,ki} F_i =
    \sum_{i=0}^{k-1} \l( \sum_{j=i+1}^k \bK_{\mP,kj} \r) (F_{i+1} - F_i)
    = \sum_{i=0}^{k-1} (t_k - t_i)^{\alpha} (F_{i+1} - F_i), \\
    1 &= W_{k+1} = \sum_{i=1}^{k+1} \bK_{\mP,k+1,i} F_i = \sum_{i=0}^{k} (t_{k+1} - t_i)^{\alpha} (F_{i+1} - F_i),
  \end{aligned}
\]
where we set $F_0 = 0$ in the equations above. Therefore to show $F_{k+1} < F_k$, we only need to prove that
\begin{equation}\label{eq:mat_prop2-proof1}
  \begin{aligned}
    0 < \sum_{i=0}^{k-1} (t_{k+1} - t_i)^{\alpha} (F_{i+1} - F_i) - 1
    &= \sum_{i=0}^{k-1} (t_{k+1} - t_i)^{\alpha} (F_{i+1} - F_i) - \sum_{i=0}^{k-1} (t_k - t_i)^{\alpha} (F_{i+1} - F_i) \\
    &= \sum_{i=0}^{k-1} \big( (t_{k+1} - t_i)^{\alpha} - (t_k - t_i)^{\alpha} \big) (F_{i+1} - F_i).
  \end{aligned}
\end{equation}
Since we also have
\[
  1 = W_{k-1} = \sum_{i=1}^{k-1} \bK_{\mP,k-1,i} F_i = \sum_{i=0}^{k-2} (t_{k-1} - t_i)^{\alpha} (F_{i+1} - F_i) = \sum_{i=0}^{k-1} (t_{k-1} - t_i)^{\alpha} (F_{i+1} - F_i),
\]
Taking the difference between the equation above and the one for $W_k$, we obtain that
\begin{equation*}
  \begin{aligned}
    0 = W_k - W_{k-1} &= \sum_{i=0}^{k-1} (t_k - t_i)^{\alpha} (F_{i+1} - F_i) - \sum_{i=0}^{k-1} (t_{k-1} - t_i)^{\alpha} (F_{i+1} - F_i) \\
    &= \sum_{i=0}^{k-1} \big( (t_{k} - t_i)^{\alpha} - (t_{k-1} - t_i)^{\alpha} \big) (F_{i+1} - F_i)
  \end{aligned}
\end{equation*}
In light of this identity, we claim that to obtain \eqref{eq:mat_prop2-proof1} it suffices to show that
\begin{equation}\label{eq:mat_prop2-proof2}
  \dfrac{t_{k+1}^{\alpha} - t_k^{\alpha}}{ t_{k}^{\alpha} - t_{k-1}^{\alpha}}
  = \dfrac{(t_{k+1} - t_0)^{\alpha} - (t_k - t_0)^{\alpha}}{ (t_{k} - t_0)^{\alpha} - (t_{k-1} - t_0)^{\alpha}} > 
  \dfrac{(t_{k+1} - t_i)^{\alpha} - (t_k - t_i)^{\alpha}}{ (t_{k} - t_i)^{\alpha} - (t_{k-1} - t_i)^{\alpha}}, \quad i \in \{1 , \ldots, k-1\}.
\end{equation}
If this is true, letting $c = \l( t_{k+1}^{\alpha} - t_k^{\alpha}\r)/\l(t_{k}^{\alpha} - t_{k-1}^{\alpha}\r)$ we have:
\[
  \begin{aligned}
    & \sum_{i=0}^{k-1} \big( (t_{k+1} - t_i)^{\alpha} - (t_k - t_i)^{\alpha} \big) (F_{i+1} - F_i) \\
    & = \sum_{i=0}^{k-1} \Big( \big( (t_{k+1} - t_i)^{\alpha} - (t_k - t_i)^{\alpha} \big) - c \big( (t_{k} - t_i)^{\alpha} - (t_{k-1} - t_i)^{\alpha} \big) \Big) (F_{i+1} - F_i) \\
    & = \sum_{i=1}^{k-1} \Big( \big( (t_{k+1} - t_i)^{\alpha} - (t_k - t_i)^{\alpha} \big) - c \big( (t_{k} - t_i)^{\alpha} - (t_{k-1} - t_i)^{\alpha} \big) \Big) (F_{i+1} - F_i) \\
    &= \sum_{i=1}^{k-1} d_i \; (F_{i+1} - F_i),
  \end{aligned}
\]
where $d_i = \big( (t_{k+1} - t_i)^{\alpha} - (t_k - t_i)^{\alpha} \big) - c \big( (t_{k} - t_i)^{\alpha} - (t_{k-1} - t_i)^{\alpha} \big) < 0$ due to \eqref{eq:mat_prop2-proof2}. By the inductive hypothesis, $F_{i+1} - F_i < 0$ for $1 \le i \le k-1$, so the equation above implies \eqref{eq:mat_prop2-proof1}, and hence 
$F_{k+1} < F_k$ is proved.

To finish the proof, we focus on \eqref{eq:mat_prop2-proof2}, fix $i$ and define $c_1 = t_{k-1} - t_i$, $c_2 = t_{k} - t_i$, $c_3 = t_{k+1} - t_i$ and function
\[
  h(x) = \dfrac{(x+c_3)^{\alpha} - (x+c_2)^{\alpha}}{(x+c_2)^{\alpha} - (x+c_1)^{\alpha}}.
\]
Then \eqref{eq:mat_prop2-proof2} is equivalent to $h(t_i - t_0) > h(0)$, and it remains to show that $h(x)$ is strictly increasing for $x > 0$. We observe that
\[
\dfrac{\diff }{\diff x}\l( \ln(h(x)) \r) =
\alpha \l[ \dfrac{(x+c_3)^{\alpha-1} - (x+c_2)^{\alpha-1}}{(x+c_3)^{\alpha} - (x+c_2)^{\alpha}} - \dfrac{(x+c_2)^{\alpha-1} - (x+c_1)^{\alpha-1}}{(x+c_2)^{\alpha} - (x+c_1)^{\alpha}} \r].
\]
Applying Cauchy's mean-value theorem to the two fractions above, we know there exists $\eta \in (x+c_2, x+c_3)$ and $\xi \in (x+c_1, x+c_2)$ such that
\[
\dfrac{\diff}{\diff x}\l( \ln(h(x)) \r) = \alpha \l[ \dfrac{(\alpha-1) \eta^{\alpha-2}}{\alpha \eta^{\alpha-1}} - 
\dfrac{(\alpha-1) \xi^{\alpha-2}}{\alpha \xi^{\alpha-1}}
\r]
= (\alpha-1) \l( \eta^{-1} - \xi^{-1} \r) > 0,
\]
where the last inequality holds because $\alpha < 1$ and $\xi< x+c_2< \eta$. This shows the monotonicity of function $h$ and confirms \eqref{eq:mat_prop2-proof2}. This concludes the inductive step and proves \eqref{eq:mat_prop2-1}. 

The proof of \eqref{eq:mat_prop2-2} is obtained similarly. For convenience we only write the proof for $i=1$, but the extension to general $i$ is straightforward. Consider a vector $\bW \in \mR^N$ such that $W_j = 1$ if $j = 1$ and $W_j = 0$ if $j \neq 1$, then it suffices to prove that vector $\bF = \bK_\mP^{-1} \bW$ satisfies
\begin{equation}\label{eq:mat_prop2-proof3}
  F_n < F_{n+1}
\end{equation}
for $n \in \{2,\ldots, N-1\}$. We prove \eqref{eq:mat_prop2-proof3} by induction on $n$. For $n = 2$, observe that
\[
  W_k = \sum_{j=0}^k (t_{k} - t_j)^{\alpha} (F_{j+1} - F_j) = \sum_{j=0}^{k-1} (t_{k} - t_j)^{\alpha} (F_{j+1} - F_j)
\]
from the proof of \eqref{eq:mat_prop2-1} with $F_0 = 0$, we have
\[
  \begin{aligned}
    1 &= W_1 = (t_1 - t_{0})^{\alpha} (F_{1} - F_{0}) \\
    0 &= W_2 = (t_{2} - t_{0})^{\alpha} (F_{1} - F_{0}) + (t_{2} - t_{1})^{\alpha} (F_{2} - F_{1}) \\
    0 &= W_3 = (t_{3} - t_{0})^{\alpha} (F_{1} - F_{0}) + (t_{3} - t_{1})^{\alpha} (F_{2} - F_{1}) + (t_{3} - t_{2})^{\alpha} (F_{3} - F_{2})
  \end{aligned}
\]
From the first and second equation above, we see that $F_1 > 0$ and $F_2 - F_1 < 0$. Combining the second and the third equation we deduce that
\[
  0 = W_3 - \dfrac{t_3^{\alpha}}{t_2^{\alpha}} W_2
  = \l[(t_3 - t_1)^{\alpha} - (t_2 - t_1)^{\alpha} \dfrac{t_3^{\alpha}}{t_2^{\alpha}} \r] (F_2 - F_1) + (t_3 - t_2)^{\alpha} (F_{3} - F_{2}).
\]
Since $(t_3 - t_1)^{\alpha} - (t_2 - t_1)^{\alpha} (t_3/t_2)^{\alpha} = (t_3 - t_1)^{\alpha} - (t_3 - (t_1 t_3/t_2))^{\alpha} > 0$, we obtain that $F_3 - F_2 > 0$ which is \eqref{eq:mat_prop2-proof3} for $n = 2$.

It also remains to prove that when \eqref{eq:mat_prop2-proof3} holds for $n \in \{2, \ldots, k-1\}$, then it also holds for $n = k$, i.e. $F_k < F_{k+1}$, provided that $k < N$. To this aim, we first see that
\begin{equation}\label{eq:mat_prop2-proof4}
  0 = W_{k+1} - \dfrac{t_{k+1}^{\alpha}}{t_{k}^{\alpha}} W_{k}
  = \sum_{j=1}^{k} \l( (t_{k+1} - t_j)^{\alpha} - (t_k - t_j)^{\alpha} 
  \dfrac{t_{k+1}^{\alpha}}{t_{k}^{\alpha}}\r) (F_{j+1} - F_j).
\end{equation}
Therefore in order to prove $F_k < F_{k+1}$, we only need to show that
\begin{equation}\label{eq:mat_prop2-proof5}
  \sum_{j=1}^{k-1} \l( (t_{k+1} - t_j)^{\alpha} - (t_k - t_j)^{\alpha} 
  \dfrac{t_{k+1}^{\alpha}}{t_{k}^{\alpha}}\r) (F_{j+1} - F_j) < 0.
\end{equation}
Similar to \eqref{eq:mat_prop2-proof4} we also have
\[
  \begin{aligned}
    0 = W_{k} - \dfrac{t_{k}^{\alpha}}{t_{k-1}^{\alpha}} W_{k-1}
    = \sum_{j=1}^{k-1} \l( (t_{k} - t_j)^{\alpha} - (t_{k-1} - t_j)^{\alpha} 
    \dfrac{t_{k}^{\alpha}}{t_{k-1}^{\alpha}}\r) (F_{j+1} - F_j).
  \end{aligned}
\]
Thanks to the inductive hypothesis, we know that $F_{j+1} - F_j < 0$ for $j = 2$ and $F_{j+1} - F_j > 0$ for $j \in \{3, \ldots, k-1\}$, 
Therefore using a similar argument used in the proof for \eqref{eq:mat_prop2-1}, to prove \eqref{eq:mat_prop2-proof5} we only need to show
\begin{equation}\label{eq:mat_prop2-proof6}
  \dfrac{(t_{k+1} - t_1)^{\alpha} - (t_k - t_1)^{\alpha} (t_{k+1}/t_{k})^{\alpha} }{ (t_{k} - t_1)^{\alpha} - (t_{k-1} - t_1)^{\alpha} (t_{k}/t_{k-1})^{\alpha} } > 
  \dfrac{(t_{k+1} - t_j)^{\alpha} - (t_k - t_j)^{\alpha} (t_{k+1}/t_{k})^{\alpha} }{ (t_{k} - t_j)^{\alpha} - (t_{k-1} - t_j)^{\alpha} (t_{k}/t_{k-1})^{\alpha} }, 
  \quad j \in \{2, \ldots, k-1\},
\end{equation}
which is similar to \eqref{eq:mat_prop2-proof2}. We rewrite the inequality above as
\[
  \dfrac{(1 - t_1/t_{k+1})^{\alpha} - (1 - t_1/t_{k})^{\alpha} }{ (1 - t_1/t_k)^{\alpha} - (1 - t_1/t_{k-1})^{\alpha} } > 
  \dfrac{(1 - t_j/t_{k+1})^{\alpha} - (1 - t_j/t_{k})^{\alpha} }{ (1 - t_j/t_k)^{\alpha} - (1 - t_j/t_{k-1})^{\alpha} }, 
  \quad j \in \{2, \ldots, k-1\},
\]
and define the function
\[
  h_1(x) =  \dfrac{(1 - x/t_{k+1})^{\alpha} - (1 - x/t_{k})^{\alpha} }{ (1 - x/t_k)^{\alpha} - (1 - x/t_{k-1})^{\alpha} },
\]
then it suffices to show that $h_1'(x) < 0$ for $0 < x < t_{k-1}$. Observing that
\[
  \begin{aligned}
    & \dfrac{\diff}{\diff x} \ln(h_1(x))  =
    -\dfrac{\alpha}{x} \bigg[ \dfrac{(x/t_{k+1})(1 - x/t_{k+1})^{\alpha-1} - (x/t_{k})(1 - x/t_{k})^{\alpha-1} }{ (1 - x/t_{k+1})^{\alpha} - (1 - x/t_{k})^{\alpha} } \\
    & \qquad \qquad \qquad \qquad \qquad -\dfrac{(x/t_{k})(1 - x/t_{k})^{\alpha-1} - (x/t_{k-1})(1 - x/t_{k-1})^{\alpha-1} }{ (1 - x/t_{k})^{\alpha} - (1 - x/t_{k-1})^{\alpha} } \bigg].
  \end{aligned}
\]
Letting $h_2(x) = (1-x)x^{\alpha-1}, h_3(x) = x^{\alpha}$, by Cauchy's mean-value theorem, there exists $\eta \in (1 - x/t_{k}, 1 - x/t_{k+1})$ and $\xi \in (1 - x/t_{k-1}, 1 - x/t_{k})$ such that
\[
  \dfrac{\diff}{\diff x}\l( \ln(h_1(x)) \r) = -\dfrac{\alpha}{x} 
  \l( \dfrac{h'_2(\eta)}{h'_3(\eta)} - \dfrac{h'_2(\xi)}{h'_3(\xi)} \r) = 
  -\dfrac{\alpha}{x} 
  \l( \l( \dfrac{\alpha-1}{\alpha \eta} -1 \r) - 
  \l( \dfrac{\alpha-1}{\alpha \xi} - 1\r) \r) < 0
\]
because $0<\xi<\eta$. This implies that $h_1'(x) < 0$ for $0 < x < t_{k-1}$ and finishes inductive step of the induction. Hence \eqref{eq:mat_prop2-2} is proved.
\end{proof}

\begin{Remark}[generalization]
Notice that, for a general kernel $g$, property \eqref{eq:mat_prop2-1} remains valid provided $G(t) = \ln(g(t))$ satisfies $G''(t) > 0$.
\end{Remark}

\subsection{A continuous interpolant}

Given a partition $\mP$, a sequence $\bW \in \mH^N$, and $W_0 \in \mH$, we defined the discrete Caputo derivative $\l( D^{\alpha}_\mP \bW \r)_n$ via \eqref{eq:discrete-Caputo}. Motivated by the Volterra type equation \eqref{eq:Volterra-eq} between a continuous function $w$ and its Caputo derivative $D^{\alpha}_c w$, it is possible, following \cite{LiLiuGradientFlow-19}, to define, over $\mP$, a natural continuous interpolant of $W_n$ by
\begin{equation}\label{eq:def-Uinterp}
  \widehat{W}_\mP(t) = W_0 + \frac{1}{\Gamma(\alpha)} \int_0^t (t-s)^{\alpha-1} \overline{V}_\mP(s) \diff s
\end{equation}
where $\overline{V}_\mP$ is defined by
\begin{equation}\label{eq:def-Uinterp-Ft}
  \overline{V}_\mP(t) = \l( D^{\alpha}_\mP \bW \r)_{n(t)}.
\end{equation}
By definition, we have that $\widehat{W}_\mP(t_n) = W_n$. Moreover,
\begin{equation}
  \begin{aligned}
    \widehat{W}_\mP(t) &= W_0 + \frac{1}{\Gamma(\alpha+1)} \sum_{j=1}^{n - 1} \l( (t-t_{j-1})^{\alpha} - (t-t_j)^{\alpha} \r) \l( D^{\alpha}_\mP \bW \r)_j
    + (t_{n} - t)^{\alpha} \l( D^{\alpha}_\mP \bW \r)_{n} \\
    &= \sum_{i=0}^{n(t)} W_i \varphi_{\mP,i}(t),
  \end{aligned}
\end{equation}
where we defined
\begin{equation}\label{eq:hat_function}
  \begin{aligned}
    \varphi_{\mP,0}(t) &= 1 + \frac{1}{\Gamma(\alpha+1)} \sum_{j=1}^{n(t) - 1} \l( (t-t_{j-1})^{\alpha} - (t-t_j)^{\alpha} \r) \bK^{-1}_{\mP,j0} + (t_{n} - t)^{\alpha} \bK^{-1}_{\mP,n0}, \\
    \varphi_{\mP,i}(t) &= \frac{1}{\Gamma(\alpha+1)} \sum_{j=i}^{n(t) - 1} \l( (t-t_{j-1})^{\alpha} - (t-t_j)^{\alpha} \r) \bK^{-1}_{\mP,ji} + (t_{n} - t)^{\alpha} \bK^{-1}_{\mP,ni}, \quad i \in \{1, \ldots, N\}.
  \end{aligned}
\end{equation}

\begin{figure}
  \begin{tabular}{ccc}
    $\alpha = 0.1$ & $\alpha = 0.5$ & $\alpha = 0.9$ \\
    \includegraphics[scale=0.3]{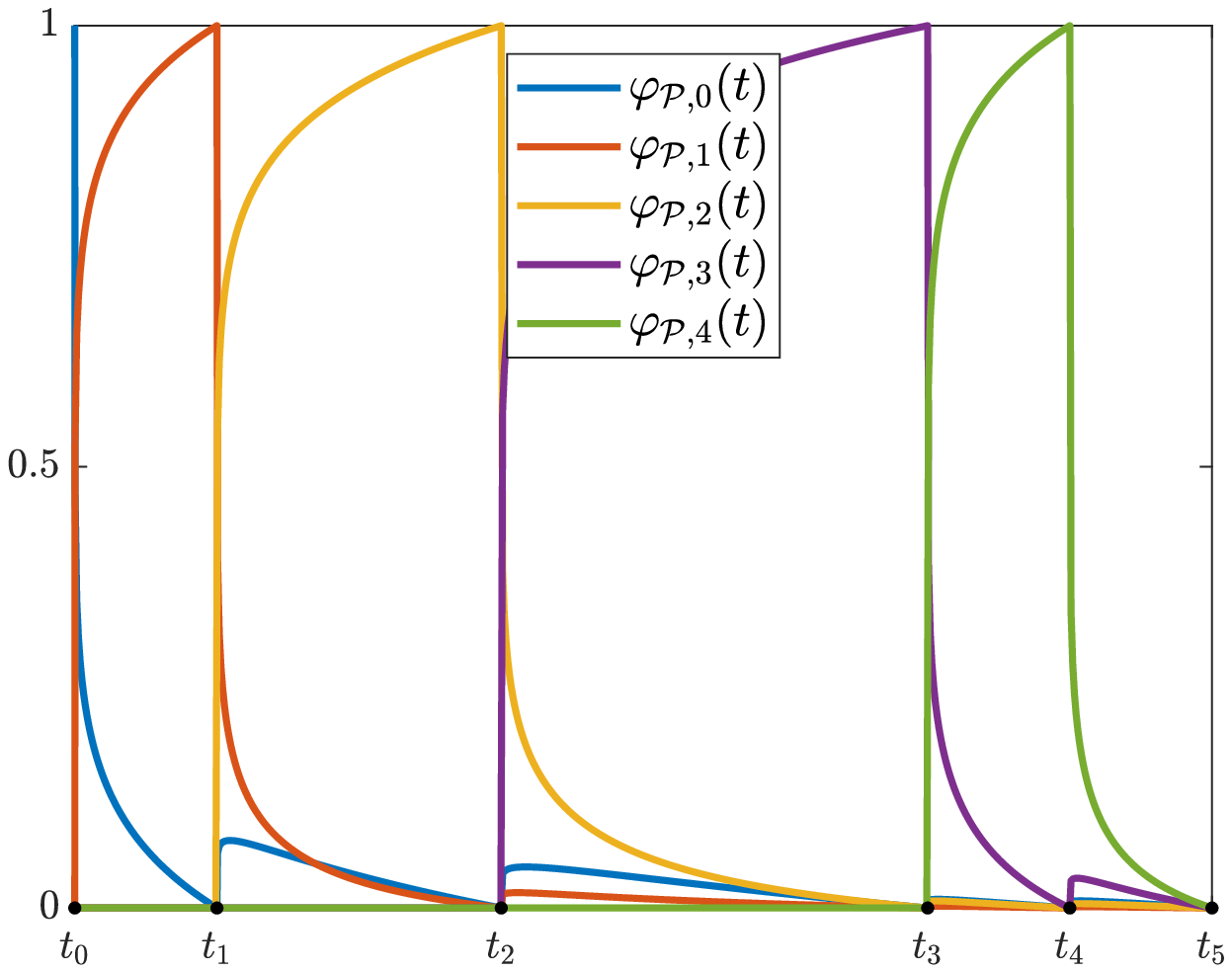} &
    \includegraphics[scale=0.3]{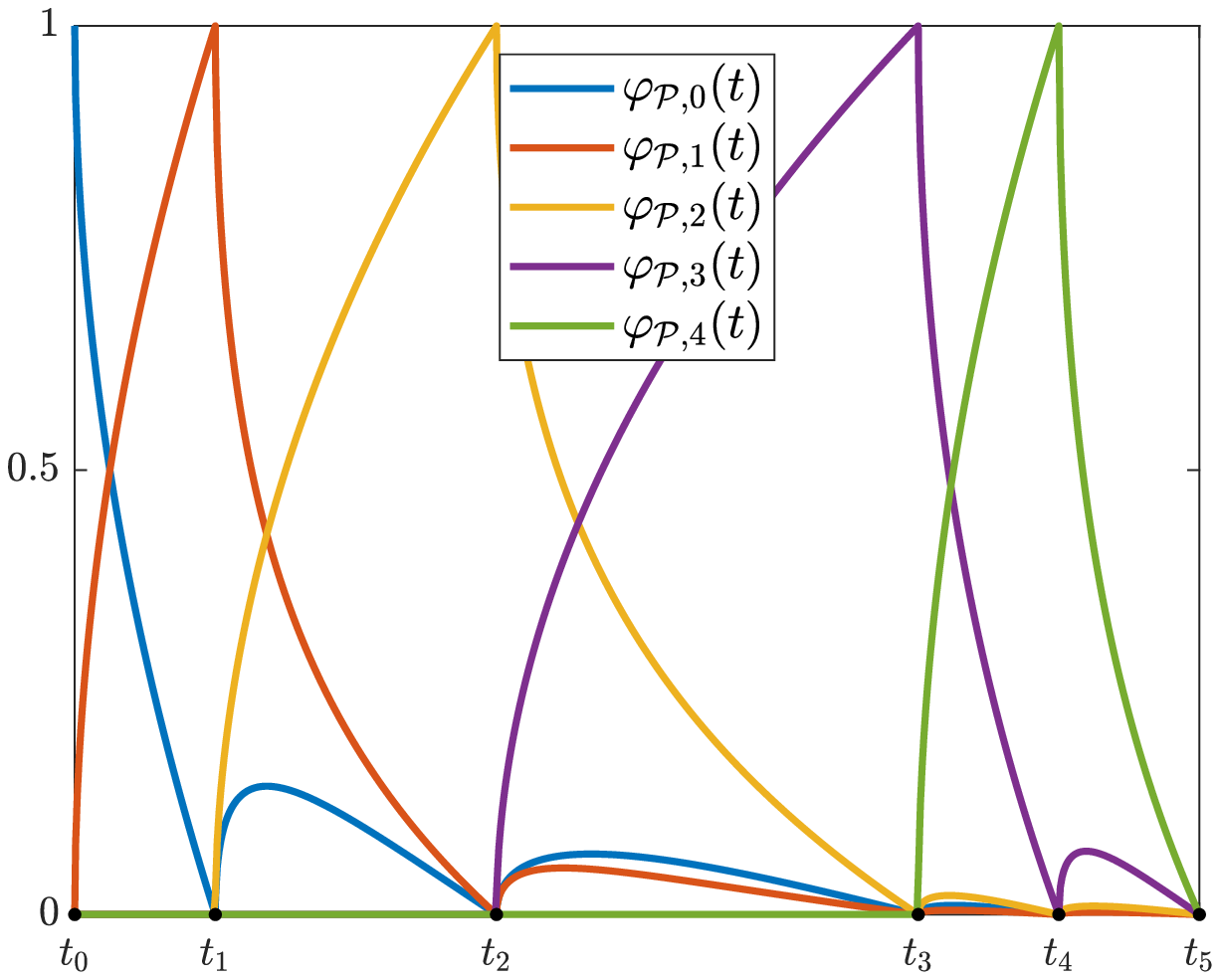} &
    \includegraphics[scale=0.3]{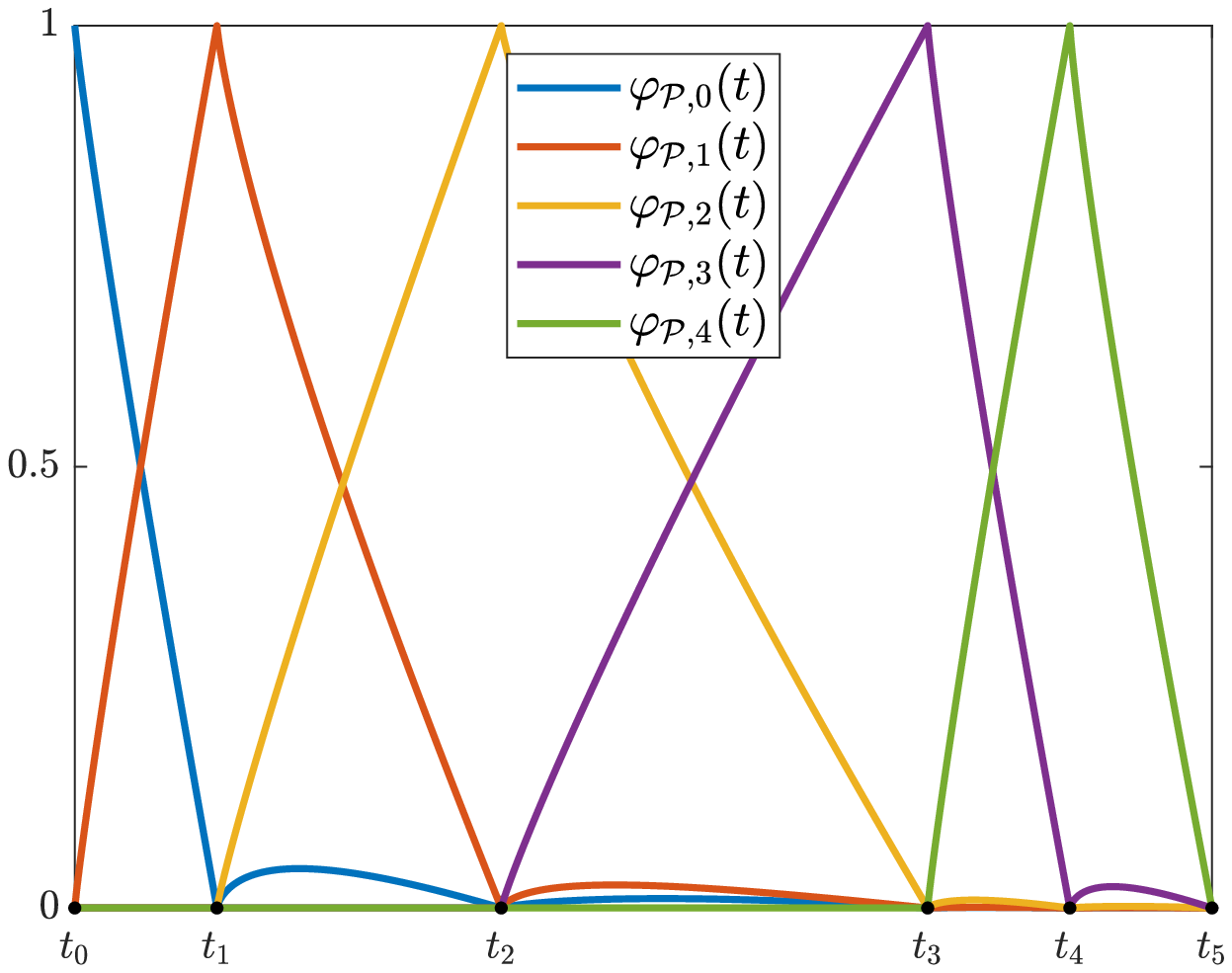}
  \end{tabular}

  \caption{Given a partition $\mP$, the figure shows the nonlocal basis functions $\{\varphi_{\mP,i}\}_{i=0}^N$ for different values of $\alpha$. Every function whose Caputo derivative is piecewise constant can be written as a linear combination of these functions. Notice that, for any partition point $\varphi_{\mP,i}(t_j) = \delta_{ij}$. In addition, Proposition~\ref{prop:interp-convex-comb} shows that these functions form a partition of unity.}\label{fig:illustration}
\end{figure}

The functions $\{\varphi_{\mP,i}\}_{i=0}^N$ play the role, in this context, of the standard ``hat'' basis functions used for piecewise linear interpolation over a partition $\mP$. Indeed, they are such that any function with piecewise constant (Caputo) derivative can be written as a linear combination of them. Figure~\ref{fig:illustration} illustrates the behavior of these functions. As expected, and in contrast to the hat basis functions, these functions are nonlocal, in the sense that they have global support. Something worth noticing is also that the figure seems to indicate that, as $\alpha \downarrow 0$, the functions resemble piecewise constants and, in contrast, when $\alpha \uparrow 1$ they tend to the classical hat basis functions.

An important feature of the hat basis functions is that they form a partition of unity. It is easy to check that, for any $t \in [0,T]$ we have $\sum_{i=0}^{n(t)} \varphi_{\mP,i}(t) = 1$. The following result shows that $\varphi_{\mP,i}(t) \ge 0$. Thus, for any $t \in [0,T]$, $\widehat{W}_\mP(t)$ is a convex combination of its nodal values $\{W_j\}_{j=0}^N$. This observation will be crucial to derive an a posteriori error estimate in \Cref{sec:post-error}.

\begin{Proposition}[positivity]
\label{prop:interp-convex-comb}
Let $\mP$ be a partition defined as in \eqref{eq:def-partition}. Let the functions $\{\varphi_{\mP,i} \}_{i=0}^N$ be defined as in \eqref{eq:hat_function}. Then, for any $i \in \{0, \ldots, N\}$ and $t \in[0,T]$, we have $\varphi_{\mP,i}(t) \ge 0$. In addition, for $t \notin \mP$ and $i \in \{0, \ldots, n(t) \}$ we have $\varphi_{\mP,i}(t) > 0$.
\end{Proposition}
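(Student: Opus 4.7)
My approach exploits the linearity of the interpolation map $\mathbf{W}\mapsto\widehat{W}_{\mP}$: setting $W_j=\delta_{ij}$ (with initial datum $W_0=\delta_{i,0}$) gives $\widehat{W}_{\mP}=\varphi_{\mP,i}$, so positivity of the basis functions reduces to a study of the canonical interpolant. The associated discrete Caputo derivative is $V^{(i)}_n=\mathbf{K}^{-1}_{\mP,n,i}$, whose sign and monotonicity structure is exactly what Propositions~\ref{prop:mat_prop1} and~\ref{prop:mat_prop2} describe: $V^{(i)}_n=0$ for $n<i$, $V^{(i)}_i>0$, $V^{(i)}_n<0$ for $n>i$, and the sequence $\{V^{(i)}_n\}_{n\ge i+1}$ is strictly increasing (hence $\{|V^{(i)}_n|\}$ is strictly decreasing). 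These will be the only analytic facts used besides the representation \eqref{eq:hat_function} and subadditivity of $x\mapsto x^\alpha$.

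I would then split the proof according to $n=n(t)$. If $n\le i-1$ (only possible when $i\ge 2$), the integrand in \eqref{eq:def-Uinterp} vanishes on $[0,t]$ and $\varphi_{\mP,i}(t)=0$ trivially. If $n=i$, a direct evaluation of \eqref{eq:hat_function} yields
\[
\varphi_{\mP,i}(t)=\frac{V^{(i)}_i}{\Gamma(\alpha+1)}(t-t_{i-1})^\alpha>0,\qquad t\in(t_{i-1},t_i],
\]
with the $i=0$ subcase on $(0,t_1]$ producing $\varphi_{\mP,0}(t)=1-(t/t_1)^\alpha\ge 0$ after absorbing $W_0=1$.

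The crux is the case $n>i$, where $\varphi_{\mP,i}(t_{n-1})=\varphi_{\mP,i}(t_n)=0$. Here I would apply Abel summation to \eqref{eq:hat_function} to obtain the compact form
\[
\Gamma(\alpha+1)\varphi_{\mP,i}(t)=V^{(i)}_i(t-t_{i-1})^\alpha+\sum_{j=i}^{n-1}(V^{(i)}_{j+1}-V^{(i)}_j)(t-t_j)^\alpha,
\]
in which every coefficient $V^{(i)}_{j+1}-V^{(i)}_j$ for $j\ge i+1$ is positive by Proposition~\ref{prop:mat_prop2}, and the single negative coefficient is $V^{(i)}_{i+1}-V^{(i)}_i$. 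The strategy is to eliminate this lone negative term using one of the boundary identities $\varphi_{\mP,i}(t_{n-1})=0$ or $\varphi_{\mP,i}(t_n)=0$, then parametrize $t=t_{n-1}+r\tau_n$, $r\in(0,1]$, and reduce the resulting inequality to monotonicity in $A\ge 0$ of the auxiliary function
\[
F(A;r)=(A+r\tau_n)^\alpha-r^\alpha(A+\tau_n)^\alpha,
\]
which in turn follows from the strict concavity of $x\mapsto x^\alpha$ on $(0,\infty)$ for $\alpha\in(0,1)$ and can be checked by elementary calculus.

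The main obstacle is precisely this algebraic bookkeeping in the $n>i$ case: verifying that after the substitution and rearrangement the residual expression is manifestly nonnegative (in particular handling several different positions of $j$ relative to $i$ without losing positivity). Strict positivity for $t\notin\mP$ will follow at once because every monotonicity inequality invoked — for $F(\cdot;r)$, for the $|V^{(i)}_n|$, and for $(t-t_j)^\alpha$ — is strict whenever $r\in(0,1)$.
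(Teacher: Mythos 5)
Your reduction to the canonical vectors $W_j=\delta_{ij}$, the identification $V^{(i)}_n=\bK^{-1}_{\mP,ni}$, the sign and monotonicity facts imported from Propositions \ref{prop:mat_prop1} and \ref{prop:mat_prop2}, and the Abel-summed representation
\[
\Gamma(\alpha+1)\varphi_{\mP,i}(t)=V^{(i)}_i(t-t_{i-1})^\alpha+\sum_{j=i}^{n-1}\bigl(V^{(i)}_{j+1}-V^{(i)}_j\bigr)(t-t_j)^\alpha
\]
are all correct, as are the cases $n\le i$. But the crux case $n>i$ is not closed, and the strategy you sketch for it does not work as stated. If you eliminate the lone negative coefficient $V^{(i)}_{i+1}-V^{(i)}_i$ using the identity $\varphi_{\mP,i}(t_n)=0$, the residual bracket attached to $V^{(i)}_{j+1}-V^{(i)}_j$ becomes $(t-t_j)^\alpha-\frac{(t-t_i)^\alpha}{(t_n-t_i)^\alpha}(t_n-t_j)^\alpha$, whose sign is that of $\frac{t-t_j}{t_n-t_j}-\frac{t-t_i}{t_n-t_i}$; since $s\mapsto\frac{t-s}{t_n-s}$ is decreasing for $s<t<t_n$, this is \emph{negative} for every $j>i$, so you trade one bad term for many. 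Eliminating via $\varphi_{\mP,i}(t_{n-1})=0$ instead flips the sign of the $V^{(i)}_i$ bracket. Likewise your inequality $F(A;r)\ge0$, i.e.\ $(t-t_j)^\alpha\ge r^\alpha(t_n-t_j)^\alpha$, points the right way only on the positive-coefficient terms and is useless on the negative one, where you would need the reverse inequality (which fails for $r<1$). More fundamentally, for $n-i\ge3$ the coefficient signs together with the two boundary identities at $t_{n-1}$ and $t_n$ do not determine the coefficients, so a purely local argument of this type cannot suffice: one must exploit the vanishing of $\varphi_{\mP,i}$ at \emph{all} grid points $t_{i+1},\dots,t_n$.

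The paper closes this case by a different device that you should adopt: insert $t$ into the partition, set $\mP'=\mP\cup\{t\}$ with $\bW'$ given by $W'_j=\widehat{W}_\mP(t'_j)$, and use that $\overline{V}_\mP$ is constant on $(t_{n-1},t_n]$ to conclude $(D^\alpha_{\mP'}\bW')_{n(t)}=(D^\alpha_{\mP'}\bW')_{n(t)+1}$. Since the only nonzero entries of $\bW'$ are $W'_i=1$ and $W'_{n(t)}=\widehat{W}_\mP(t)$, this is a single linear equation, and
\[
\widehat{W}_\mP(t)=\frac{\bK^{-1}_{\mP',n(t)+1,i}-\bK^{-1}_{\mP',n(t),i}}{\bK^{-1}_{\mP',n(t),n(t)}-\bK^{-1}_{\mP',n(t)+1,n(t)}}>0,
\]
with positivity of numerator and denominator read off from \eqref{eq:mat_prop2-1}--\eqref{eq:mat_prop2-2} and \eqref{eq:mat_prop1-2} applied to the \emph{refined} partition $\mP'$ rather than to $\mP$. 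That is the missing idea; once it is in place your remaining algebraic bookkeeping is unnecessary.
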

\begin{proof}
By definition, for $t = t_n$, we have $\varphi_{\mP,n}(t_n) = 1$ and $\varphi_{\mP,i}(t_n) = 0$ for any $i \neq n$. Also, for  $i> n(t)$, we see that $\varphi_{\mP,i}(t) = 0$, and hence it only remains to show that $\varphi_{\mP,i}(t) > 0$ for $i \leq n(t)$. To show this, consider $W_i = 1$ and $W_j = 0$ for $j \neq i$, a piecewise constant $\overline{V}_\mP$ and its interpolation $\widehat{W}_\mP$ defined in \eqref{eq:def-Uinterp} and \eqref{eq:def-Uinterp-Ft}. Then our goal is to show that $\widehat{W}_\mP(t) > 0$.

If $i = n(t) > 0$, then it is easy to check by definition that $\l( D^{\alpha}_\mP \bW \r)_n > 0$ and $\l( D^{\alpha}_\mP \bW \r)_j = 0$ for $j \in \{1, \ldots, i-1\}$. Therefore we obtain
\[
  \widehat{W}_\mP(t) = \frac{1}{\Gamma(\alpha)} \int_0^t (t-s)^{\alpha-1} \overline{V}(s) \diff s 
  = \frac{(t_n - t)^{\alpha}}{\Gamma(\alpha+1)} \l( D^{\alpha}_\mP \bW \r)_n > 0.
\]

If $i < n(t)$, the proof is not that straightforward. The trick is to insert the time $t$, which is not on the partition $\mP$, to get a new partition $\mP' = \mP \cup \{t\}$ and then apply Propositions \ref{prop:mat_prop1} and \ref{prop:mat_prop2} in an appropriate way. Let us now work out the details. Let $\mP' = \{t_k'\}_{k=0}^{N+1}$ and notice that $t_{n(t)}' = t, t_{n(t)+1}' = t_{n(t)}$. On the basis of this partition we define the vector $\bW'\in \mH^{N+1}$ via $W'_j = \widehat{W}_\mP(t_j')$, then since $\overline{V}_\mP$ is constant on $(t_{n(t)-1}', t_{n(t)+1}'] = (t_{n(t)-1}, t_{n(t)}]$, we have
\[
  \l( D^{\alpha}_{\mP'} \bW' \r)_{n(t)} = \l( D^{\alpha}_{\mP'} \bW' \r)_{n(t)+1}.
\]
Since the only possible nonzero components of $\bW'$ are $W'_i = W_i = 1$ and $W'_{n(t)} = \widehat{W}_\mP(t)$, therefore we deduce from the equality above that
\begin{align*}
  \bK^{-1}_{\mP', n(t)i} W'_i + \bK^{-1}_{\mP', n(t)n(t)} W'_{n(t)}  &= \l( D^{\alpha}_{\mP'} \bW' \r)_{n(t)} = \l( D^{\alpha}_{\mP'} \bW' \r)_{n(t)+1} \\ &= \bK^{-1}_{\mP', n(t)+1, i} W'_i + \bK^{-1}_{\mP', n(t)+1, n(t)} W'_{n(t)},
\end{align*}
which can be rearranged as
\[
  \bK^{-1}_{\mP', n(t)+1, i} - \bK^{-1}_{\mP', n(t)i} = \widehat{W}_\mP(t) \l( \bK^{-1}_{\mP', n(t)n(t)} - \bK^{-1}_{\mP', n(t)+1, n(t)} \r).
\]
From \Cref{prop:mat_prop2} we see that $\bK^{-1}_{\mP', n(t)+1, i} - \bK^{-1}_{\mP', n(t)i} > 0$ and from \Cref{prop:mat_prop1}  we see that $\bK^{-1}_{\mP', n(t)n(t)} - \bK^{-1}_{\mP', n(t)+1, n(t)} > 0$ as a consequence of $\bK^{-1}_{\mP', n(t)n(t)} > 0$ and $\bK^{-1}_{\mP', n(t)+1, n(t)} < 0$. This leads to the fact that $\widehat{W}_\mP(t) > 0$ and finishes our proof.
\end{proof}

\section{Time fractional gradient flow: Theory}\label{sec:theory}

We have now set the stage for the study of time fractional gradient flows, which were formally described in \eqref{eq:theGradFlow}. Throughout the remaining of our discussion we shall assume that the initial condition satisfies $u_0 \in D(\Phi)$ and that $f \in L^2_\alpha(0,T;\mH)$. We begin by commenting that the case $f=0$ was already studied in \cite[Section 5]{LiLiuGradientFlow-19} where they studied so-called \emph{strong solutions}, see \cite[Definition 5.4]{LiLiuGradientFlow-19}. Here we trivially extend their definition to the case $f \neq 0$.

\begin{Definition}[strong solution]\label{def:strong}
A function $u \in L_{loc}^1([0,T);\mH)$ is a strong solution to \eqref{eq:theGradFlow} if
\begin{enumerate}[(i)]
  \item (Initial condition)
  \[
    \lim_{t \downarrow 0} \fint_0^t \Vert u(s) - u_0 \Vert \diff s = 0.
  \]
 
  \item (Regularity) $D_c^{\alpha} u(t) \in L_{loc}^1([0,T);\mH)$.

  \item (Evolution) For almost every $t \in [0,T)$, we have $f(t)-D_c^{\alpha} u(t) \in \partial \Phi(u(t))$.
\end{enumerate}
\end{Definition}

\subsection{Energy solutions}

Since $\mH$ is a Hilbert space, we will mimic the theory for classical gradient flows and introduce the notion of energy solutions for \eqref{eq:theGradFlow}. To motivate it, suppose that at some $t \in (0, T)$ 
\[
  f(t)-D_c^{\alpha} u(t) \in \partial \Phi(u(t)),
\]
then, by definition of the subdifferential, this is equivalent to the \emph{evolution variational inequality} (EVI)
\begin{equation}\label{eq:sol-ineq-@t}
\langle D_c^{\alpha} u(t), u(t) - w \rangle + \Phi(u(t)) - \Phi(w) \le \langle f(t), u(t) - w \rangle, 
\quad \forall w \in \mH.
\end{equation}

\begin{Definition}[energy solution]\label{def:energy-sol}
The function $u \in L^2(0,T;\mH)$ is an energy solution to \eqref{eq:theGradFlow} if
\begin{enumerate}[(i)]
  \item (Initial condition)
  \[
    \lim_{t \downarrow 0} \fint_0^t \Vert u(s) - u_0 \Vert^2 \diff s = 0.
  \]

  \item (Regularity) $D_c^{\alpha}u  \in L^2(0,T;\mH)$.
  
  \item (EVI) For any $w \in L^2(0,T;\mH)$
  \begin{equation}\label{eq:def-energy-sol}
    \int_0^T \left[ \langle D_c^{\alpha} u(t), u(t) - w(t) \rangle + \Phi(u(t)) - \Phi(w(t)) \right] \diff t \le \int_0^T \langle f(t), u(t) - w(t) \rangle \diff t .
  \end{equation}
\end{enumerate}
\end{Definition}

Notice that, provided $u_0 \in D(\Phi)$ we can set $w(t) = u_0$ in \eqref{eq:def-energy-sol} and obtain that $\int_0^T \Phi(u(t)) \diff t<\infty$, which motivates the name for this notion of solution. In addition, as the following result shows, any energy solution is a strong solution.

\begin{Proposition}[energy vs.~strong]
An energy solution of \eqref{eq:theGradFlow} is also a strong solution.
\end{Proposition}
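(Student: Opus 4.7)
The plan is to verify each of the three defining conditions of \Cref{def:strong} in turn, given that $u$ is an energy solution. Conditions (i) and (ii) follow at once from their $L^2$-counterparts in \Cref{def:energy-sol}: by Jensen's inequality,
\[
  \fint_0^t \Vert u(s) - u_0 \Vert \diff s \le \l( \fint_0^t \Vert u(s) - u_0 \Vert^2 \diff s \r)^{1/2},
\]
so the right-hand side tending to zero implies the left-hand side does too; similarly $D_c^{\alpha} u \in L^2(0,T;\mH) \hookrightarrow L^1_{loc}([0,T);\mH)$.

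For the evolution condition (iii), the approach is Lebesgue differentiation via localized test functions in the EVI. First, testing \eqref{eq:def-energy-sol} with the constant $w \equiv u_0 \in D(\Phi)$, and using that $f \in L^2_\alpha(0,T;\mH) \hookrightarrow L^2(0,T;\mH)$ and $D_c^\alpha u \in L^2(0,T;\mH)$, yields $\int_0^T \Phi(u(t)) \diff t < \infty$, so $u(t) \in D(\Phi)$ for a.e.~$t$. Now fix $v \in D(\Phi)$ and a measurable set $E \subset [0,T]$, and insert the test function $w(t) = v \chi_E(t) + u(t) \chi_{E^c}(t) \in L^2(0,T;\mH)$ into \eqref{eq:def-energy-sol}. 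Since $u - w$ and $\Phi(u) - \Phi(w)$ both vanish on $E^c$, this simplifies to
\[
  \int_E \l[ \langle D_c^{\alpha} u(t) - f(t), u(t) - v \rangle + \Phi(u(t)) - \Phi(v) \r] \diff t \le 0.
\]
As $E$ was arbitrary, the integrand is non-positive for a.e.~$t$, with an exceptional null set that a priori depends on $v$.

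To consolidate the null set, I would exploit the separability of $\mH$: pick a countable dense $\{v_k\} \subset \mH$, and let $N$ be the union of the corresponding null sets (still a null set). Outside $N$, the inequality
\[
  \langle f(t) - D_c^{\alpha} u(t), v_k - u(t) \rangle \le \Phi(v_k) - \Phi(u(t))
\]
holds simultaneously for every $k$. For $v \notin D(\Phi)$ the inequality is trivial; for $v \in D(\Phi)$ it will be extended by approximation, after which \eqref{eq:def-subdiff-strong} says exactly that $f(t) - D_c^{\alpha} u(t) \in \partial \Phi(u(t))$, giving (iii).

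The main obstacle is this last approximation step: because $\Phi$ is only lower semicontinuous, a generic sequence $v_{k_n} \to v$ drawn from a countable dense subset need not satisfy $\Phi(v_{k_n}) \to \Phi(v)$, so one cannot naively pass to the limit in the right-hand side. I would address this by enlarging the countable test set with the Moreau--Yosida resolvents $\{J_{\lambda_m} v_k\}_{k,m}$, which lie in $D(\partial \Phi) \subset D(\Phi)$, are $1$-Lipschitz in their argument, and satisfy $\Phi(J_\lambda v) \to \Phi(v)$ as $\lambda \downarrow 0$ for every $v \in D(\Phi)$. A diagonal extraction then produces, for every such $v$, a recovery sequence from the enlarged countable family with both $v_n \to v$ in $\mH$ and $\Phi(v_n) \to \Phi(v)$, permitting the desired passage to the limit and concluding (iii).
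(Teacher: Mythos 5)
Your proof is correct and rests on the same core idea as the paper's: localize the EVI \eqref{eq:def-energy-sol} in time with a test function that agrees with $u$ off a small set and equals a fixed vector on it, then pass to the pointwise inequality (the paper uses symmetric intervals $(t_0-h,t_0+h)$ and Lebesgue differentiation where you use arbitrary measurable sets $E$; these are interchangeable). The genuine difference is that you go on to consolidate the exceptional null set over all test vectors, a point the paper's proof silently elides: as written, the paper only obtains \eqref{eq:sol-ineq-@t} for each fixed $w_0$ outside a $w_0$-dependent null set. Your fix — a countable dense family enlarged by Moreau--Yosida resolvents to manufacture recovery sequences with $\Phi(v_n)\to\Phi(v)$, which mere lower semicontinuity cannot supply — is sound; the only ingredient left implicit is that the diagonal extraction also needs continuity of $v\mapsto\Phi(J_\lambda v)$, which holds because $\Phi\circ J_\lambda=\Phi_\lambda-\tfrac{\lambda}{2}\Vert A_\lambda\,\cdot\Vert^2$ with $\Phi_\lambda$ the (continuous) Moreau envelope and $A_\lambda$ the (Lipschitz) Yosida approximation. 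So your argument is, if anything, more complete than the paper's on this measure-theoretic point.
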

\begin{proof}
Evidently, it suffices to prove that that $f(t)-D_c^{\alpha} u(t) \in \partial \Phi(u(t))$ for almost every $t \in (0,T)$. Let $w_0 \in \mH$, $t_0 \in (0,T)$, and choose $h>0$ sufficiently small so that $(t_0-h,t_0+h) \subset (0,T)$. Define
\[
  w(t) = u(t) - \chi_{(t_0-h,t_0+h)} (u(t)-w_0) \in L^2(0,T;\mH)
\]
where by $\chi_S$ we denote the characteristic function of the set $S$. This choice of test function on \eqref{eq:def-energy-sol} gives
\[
  \fint_{t_0-h}^{t_0+h} \langle D_c^\alpha u(t) - f(t), u(t)-w_0 \rangle \diff t + \fint_{t_0-h}^{t_0+h}\left( \Phi(u(t)) - \Phi(w_0) \right) \diff t \leq 0.
\]
The assumptions of an energy solution guarantee that all terms inside the integrals belong to $L^1(0,T;\mR)$ so that for almost every $t_0$ we have, as $h \downarrow 0$, that
\[
  \langle D_c^\alpha u(t_0) - f(t_0), w_0 \rangle + \Phi(u(t_0)) - \Phi(w_0) \leq 0,
\]
which is \eqref{eq:sol-ineq-@t} and, as we intended to show, is equivalent to the claim.
\end{proof}

\begin{Remark}[coercivity]
By introducing the coercivity modulus of Definition~\ref{def:sigma-rho} one realizes that an energy solution $u$ satisfies, instead of \eqref{eq:sol-ineq-@t} and \eqref{eq:def-energy-sol}, the stronger inequalities
\begin{equation}\label{eq:sol-ineq-@t-strong}
  \langle D_c^{\alpha} u(t), u(t) - w \rangle + \Phi(u(t)) - \Phi(w) + \sigma(u(t); w) \le \langle f(t), u(t) - w \rangle, 
\quad \forall w \in \mH,
\end{equation}
and, for any $w \in L^2(0,T;\mH)$,
\begin{equation}\label{eq:def-energy-sol-strong}
  \int_0^T \left[ \langle D_c^{\alpha} u(t), u(t) - w(t) \rangle + \Phi(u(t)) - \Phi(w(t)) + \sigma(u(t); w(t)) \right] \diff t \le \int_0^T \langle f(t), u(t) - w(t) \rangle \diff t .
\end{equation}
\end{Remark}

\subsection{Existence and uniqueness}

In this section, we will prove the following theorem on the existence and uniqueness of energy solutions to \eqref{eq:theGradFlow} in the sense of \Cref{def:energy-sol}. The main result that we will prove reads as follows.

\begin{Theorem}[well posedness]\label{thm:exist-uniq-frac-GF}
Assume that the energy $\Phi$ is convex, l.s.c., and with nonempty effective domain. Let $u_0 \in D(\Phi)$ and $f \in L^2_\alpha(0,T;\mH)$. In this setting, the fractional gradient flow problem \eqref{eq:theGradFlow} has a unique energy solution $u$, in the sense of \Cref{def:energy-sol}. For almost every $t \in (0,T)$, the solution $u$ satisfies that $f(t)-D_c^{\alpha} u(t) \in \partial \Phi(u(t))$ and for any $t \in [0,T]$ we have
\begin{equation}
  u(t) = u_0 + \frac1{\Gamma(\alpha)}\int_0^t (t-s)^{\alpha-1} D_c^{\alpha} u(s) \diff s.
\end{equation}
In addition, 
$u \in C^{0, \alpha/2}([0,T];\mH)$ with modulus of continuity
\begin{equation}\label{eq:u-hold-cont}
\Vert u(t_2) - u(t_1) \Vert \le C|t_2 - t_1|^{\alpha/2} \l( \Vert f \Vert_{L^2_{\alpha}(0, T; \mH)}^2 + \Phi(u_0) - \Phi_{\inf} \r)^{1/2}, \quad \forall t_1, t_2, \in [0 ,T].
\end{equation}
where the constant $C$ depends only on $\alpha$. 
\end{Theorem}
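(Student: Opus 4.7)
The plan is to proceed via a fractional minimizing movements scheme built on top of the deconvolutional discretization of Section~3, derive uniform a priori bounds, extract a convergent subsequence, pass to the limit in the discrete EVI, and obtain the regularity claim directly from the Volterra representation.

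\textbf{Construction of approximations.} Given a partition $\mP$ as in \eqref{eq:def-partition} and the local averages $F_n = \fint_{t_{n-1}}^{t_n} f(s)\,\diff s$, I would define $\mathbf{U} = \{U_n\}_{n=1}^N \subset \mH$ inductively from $U_0 = u_0$ by the convex minimization
\[
  U_n = \argmin_{v \in \mH}\left\{\tfrac{1}{2}\bK^{-1}_{\mP,nn}\|v\|^2 + \Big\langle\sum_{i=0}^{n-1}\bK^{-1}_{\mP,ni}U_i - F_n,\,v\Big\rangle + \Phi(v)\right\}.
\]
Since $\bK^{-1}_{\mP,nn}>0$ by \Cref{prop:mat_prop1}, the functional is strictly convex, coercive, and l.s.c., so $U_n$ exists and is unique. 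Its Euler inclusion reads precisely $F_n - (D^\alpha_\mP\mathbf{U})_n \in \partial\Phi(U_n)$, i.e.~the time-discrete analogue of \eqref{eq:theGradFlow}. Testing this inclusion against $U_n - w$ with arbitrary $w\in\mH$ and using the refinement \eqref{eq:def-subdiff-strong} yields the discrete EVI
\[
  \langle (D^\alpha_\mP \mathbf{U})_n, U_n - w\rangle + \Phi(U_n) - \Phi(w) + \sigma(U_n;w) \leq \langle F_n, U_n - w\rangle,\qquad \forall w\in\mH.
\]

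\textbf{A priori estimates.} Choosing $w = U_{n-1}$ (or $w=u_0$) and summing with weights dictated by $\bK_\mP$, the crucial observation is that the sign structure of $\bK_\mP^{-1}$ established in \Cref{prop:mat_prop1} and the monotonicity in \Cref{prop:mat_prop2} produce a \emph{discrete chain rule}, i.e.~a discrete analogue of \eqref{eq:Caputo-ineq-norm-sq}: $\langle (D^\alpha_\mP \mathbf{U})_n, U_n\rangle \geq \tfrac12 (D^\alpha_\mP\|\mathbf{U}\|^2)_n$. Combined with Young's inequality applied to $\langle F_n, U_n\rangle$ and \Cref{lemma:pw-interp-L2alpha} for the continuity of $f\mapsto\overline{F}_\mP$ in $L^2_\alpha$, this yields uniform bounds
\[
  \max_n \|U_n\|^2 + \|\overline{V}_\mP\|^2_{L^2_\alpha(0,T;\mH)} + \sup_n \Phi(U_n) \leq C\bigl(\|f\|^2_{L^2_\alpha(0,T;\mH)} + \Phi(u_0) - \Phi_{\inf}\bigr),
\]
where $\overline{V}_\mP$ is the piecewise constant interpolant of $(D^\alpha_\mP\mathbf{U})_n$. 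In addition, by \Cref{lemma:alpha-int-w} applied to the continuous interpolant $\widehat{U}_\mP$ defined in \eqref{eq:def-Uinterp}, one obtains a uniform-in-$\mP$ Hölder-in-time modulus for $\widehat{U}_\mP$ with exponent $\alpha/2$.

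\textbf{Compactness and passage to the limit.} Take a sequence of partitions $\mP_k$ with $\tau_k\to 0$. The uniform estimates furnish a subsequence along which $\widehat{U}_{\mP_k}\rightharpoonup u$ in $L^2(0,T;\mH)$, $\overline{V}_{\mP_k}\rightharpoonup v$ in $L^2_\alpha(0,T;\mH)$, and $\widehat{U}_{\mP_k}(t)\to u(t)$ strongly for every $t\in[0,T]$ (by Arzelà--Ascoli-type compactness in $C^{0,\alpha/2}$, once one reduces to separable subsets via convex hulls, or by appealing to \cite[Section 4]{LiLiuCompact-18}). The relationship \eqref{eq:def-Uinterp} survives in the limit, identifying $v$ with $D^\alpha_c u$ via \eqref{eq:Volterra-eq}, so $D^\alpha_c u\in L^2_\alpha(0,T;\mH)$ and the initial condition follows from the Hölder estimate evaluated at $t=0$. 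To pass to the limit in the discrete EVI I would integrate it in time against an arbitrary $w\in L^2(0,T;\mH)$, use \Cref{lemma:alpha-int-f-Fbar} to control the difference between $f$ and $\overline{F}_{\mP_k}$, and invoke convexity/l.s.c.~of $\Phi$ together with the weak convergence of $D^\alpha_\mP \mathbf{U}$ to recover \eqref{eq:def-energy-sol-strong}, which implies \eqref{eq:def-energy-sol} and, as already proved above, the pointwise inclusion $f(t)-D^\alpha_c u(t)\in\partial\Phi(u(t))$ a.e.

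\textbf{Uniqueness and Hölder regularity.} For two energy solutions $u_1, u_2$, plug $u_2$ (resp.~$u_1$) as test in the EVI for $u_1$ (resp.~$u_2$), add the two inequalities, and use \eqref{eq:Caputo-ineq-norm-sq} to obtain $D^\alpha_c\|u_1-u_2\|^2 \leq 0$ a.e.~in $(0,T)$ with $(u_1-u_2)(0)=0$; then \Cref{lemma:frac-Gronwall-diff-ineq} with $\lambda=0$, $b=c=d=0$ forces $u_1\equiv u_2$. Finally, \eqref{eq:u-hold-cont} follows from the Volterra representation by splitting
\[
  u(t_2) - u(t_1) = \tfrac{1}{\Gamma(\alpha)}\!\!\int_{t_1}^{t_2}\!(t_2-s)^{\alpha-1} D^\alpha_c u\,\diff s + \tfrac{1}{\Gamma(\alpha)}\!\!\int_0^{t_1}\!\bigl[(t_2-s)^{\alpha-1}-(t_1-s)^{\alpha-1}\bigr]D^\alpha_c u\,\diff s,
\]
applying Cauchy--Schwarz with the weight $(t_2-s)^{\alpha-1}$ on the first piece and $(t_1-s)^{\alpha-1}-(t_2-s)^{\alpha-1}\le(t_1-s)^{\alpha-1}$ on the second, and using the subadditivity $t_2^\alpha-t_1^\alpha\le(t_2-t_1)^\alpha$ for $\alpha\in(0,1)$; the $L^2_\alpha$-bound on $D^\alpha_c u$ coming from the energy estimate then yields \eqref{eq:u-hold-cont}.

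\textbf{Main obstacle.} The delicate point is the discrete chain rule for the non-uniform deconvolutional derivative: translating the ``completely monotone'' argument of \cite{LiLiuGradientFlow-19} into a usable quadratic inequality $\langle (D^\alpha_\mP\mathbf{U})_n, U_n\rangle \geq \tfrac12 (D^\alpha_\mP\|\mathbf{U}\|^2)_n$ for general $\mP$. Here I would exploit precisely the two sign/monotonicity properties \eqref{eq:mat_prop1-1}--\eqref{eq:mat_prop1-2} and \eqref{eq:mat_prop2-1}--\eqref{eq:mat_prop2-2} of $\bK^{-1}_\mP$, writing $(D^\alpha_\mP\mathbf{U})_n = \sum_{i=0}^{n-1}\bK^{-1}_{\mP,ni}(U_i-U_n)$ and expanding the quadratic form, using $\bK^{-1}_{\mP,ni}\le 0$ for $i<n$ together with the Cauchy--Schwarz-type inequality $\langle U_n-U_i,U_n\rangle\ge \tfrac12(\|U_n\|^2-\|U_i\|^2)$; the monotonicity \eqref{eq:mat_prop2-1} ensures that the coefficients obtained after rearrangement reassemble into $\tfrac12 D^\alpha_\mP\|\mathbf{U}\|^2$ with a nonnegative remainder.
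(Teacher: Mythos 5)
Your overall architecture (minimizing movements via the deconvolution scheme, a priori bounds, limit passage, Volterra identification) matches the paper's, but the compactness step contains a genuine gap. You propose to obtain strong pointwise convergence $\widehat{U}_{\mP_k}(t)\to u(t)$ by an ``Arzel\`a--Ascoli-type'' argument from the uniform $C^{0,\alpha/2}$ bound. In a general separable Hilbert space this fails: equi-H\"older continuity plus boundedness does not give relative compactness in $C([0,T];\mH)$, because bounded sets of $\mH$ are not precompact, and neither restricting to separable subsets nor the Aubin--Lions-type results of \cite{LiLiuCompact-18} helps without a compactness assumption on the sublevel sets of $\Phi$ (which the paper deliberately avoids; cf.\ its remark on \cite[Assumption 5.9]{LiLiuGradientFlow-19}). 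Strong convergence is not optional here: in the limit passage for the EVI you must handle $\int_0^T\langle \overline{V}_{k}, \widehat{U}_{k}-w\rangle\,\diff t$, a pairing of two merely weakly convergent sequences, which does not converge without strong convergence of one factor. The missing ingredient is the paper's equicontinuity lemma (\Cref{lemma:a-priori-U12}): one compares the discrete solutions on \emph{two different partitions}, uses monotonicity of $\partial\Phi$ to discard the leading term $\rho(\overline{U}_1,\overline{U}_2)\ge 0$, and bounds the remainder by $C(\tau_1^{\alpha/2}+\tau_2^{\alpha/2})$, so that $\{\widehat{U}_{\mP_k}\}$ is \emph{Cauchy} in $C([0,T];\mH)$. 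No compactness is ever invoked.

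A secondary issue is the mechanism of your a priori estimates. The discrete chain rule $\langle (D^\alpha_\mP\bU)_n,U_n\rangle\ge\tfrac12(D^\alpha_\mP\|\bU\|^2)_n$ is correct (it needs only $\bK^{-1}_{\mP,ni}\le0$ for $i<n$, not \Cref{prop:mat_prop2}), but it only yields $\max_n\|U_n\|$; it does not produce the dissipation bound $\|\overline{V}_\mP\|_{L^2_\alpha(0,T;\mH)}\le C$ that drives both the uniform H\"older modulus and the identification of the limit. That bound comes from the \emph{energy} chain rule: test $F_n-(D^\alpha_\mP\bU)_n\in\partial\Phi(U_n)$ against each past iterate $U_i$, weight by $-\bK^{-1}_{\mP,ni}>0$, and sum to get $(D^\alpha_\mP\Phi(\bU))_n\le\langle F_n-(D^\alpha_\mP\bU)_n,(D^\alpha_\mP\bU)_n\rangle$; combined with $\Phi\ge\Phi_{\inf}$ and the positivity of the interpolation basis (\Cref{prop:interp-convex-comb}, needed to control $\Phi(\widehat{U}_\mP(t))$ at non-nodal $t$), this gives the claimed bound. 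Finally, a minor point: in the uniqueness argument, \Cref{lemma:frac-Gronwall-diff-ineq} requires $a\in C([0,T];\mR)$, which is not known a priori for the difference of two energy solutions; the paper instead works distributionally with $D_c^\alpha\|u_1-u_2\|^2\le0$ and the averaged initial condition, invoking \cite[Corollary 3.8]{LiLiuGeneralized-18}.
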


We point out that our assumptions are weaker than those in \cite[Theorem 5.10]{LiLiuGradientFlow-19}. First, we allow for a nonzero right hand side. In addition, we do not require \cite[Assumption 5.9]{LiLiuGradientFlow-19}, which is a sort of weak-strong continuity of subdifferentials.

The remainder of this section will be dedicated to the proof of \Cref{thm:exist-uniq-frac-GF}. To accomplish this, we follow a similar approach to \cite[Section 5]{LiLiuGradientFlow-19}. To show existence of solutions, we consider a sort of fractional minimizing movements scheme. We introduce a partition $\mP$ with maximal time step $\tau$ and compute the sequence $\bU=\{U_n\}_{n=0}^N \subset \mH$ as follows. Assume $U_0 \in D(\Phi)$ is given, the $n$--th iterate, for $n \in \{1, \ldots, N\}$, is defined recursively via
\begin{equation}\label{eq:discrete-frac-GF}
  F_n -\l( D_\mP^{\alpha} \bU \r)_n \in \partial \Phi(U_n),
\end{equation}
where 
\begin{equation}\label{eq:average-Fn}
  F_n = \fint_{t_{n-1}}^{t_n} f(t) \diff t.
\end{equation}
We will usually choose $U_0 = u_0$, but other choices of $U_0 \in D(\Phi)$ are also allowed.

From the approximation scheme \eqref{eq:discrete-frac-GF} and the expression of the discrete Caputo derivative $\l( D_\mP^{\alpha} \bU \r)_n$ given in \eqref{eq:discrete-Caputo}, it is clear that
\begin{equation}\label{eq:argmin-Un}
  U_n = \argmin_{w \in \mH} \l( 
  \Phi(w) - \langle F_n, w \rangle - \frac12 \sum_{i=0}^{n-1} 
  \bK^{-1}_{\mP,ni} \Vert w - U_i \Vert^2
\r).
\end{equation}
Thanks to \Cref{prop:mat_prop1}, for $i=0, \ldots, n-1$, we have that $\bK^{-1}_{\mP,ni} < 0$ and as a consequence the functional on the right hand side of \eqref{eq:argmin-Un} is uniformly convex.
Combining with the fact that $\Phi$ is lower semicontinuous, the functional on the right hand side has a unique minimizer, and hence $U_n$ is well-defined.

Now, in order to define a continuous in time function from $\bU$, we use the interpolation introduced in \eqref{eq:def-Uinterp}. Let $\overline{V}_\mP(t) = \l( D_\mP^{\alpha} \bU \r)_{n(t)}$. Then we have
\begin{equation}\label{eq:def-Uhat-V}
  \widehat{U}_\mP(t) = U_0 + \frac{1}{\Gamma(\alpha)} \int_0^t (t-s)^{\alpha-1} \overline{V}_\mP(s) \diff s.
\end{equation}
Recall that $\overline{F}_\mP$ can be defined from $\{F_n\}_{n=1}^N$ using \eqref{eq:def-pw-const} and that \Cref{lemma:pw-interp-L2alpha} showed that  $\overline{F}_\mP \in L^2_{\alpha}(0, T; \mH)$ with a norm bounded independently of $\mP$. We now obtain some suitable bounds for $\widehat{U}_\mP$ and $\overline{V}_\mP$.

\begin{Lemma}[a priori bounds]\label{lemma:bound-int-V}
Let $\mP$ be any partition. The functions $\widehat{U}_\mP$ and $\overline{V}_\mP$ satisfy
\begin{equation}\label{eq:V-stability}
\begin{aligned}
\sup_{t \in [0 , T]}\Phi(\widehat{U}_\mP(t))
\le \Phi(U_0) + \frac{1}{4\Gamma(\alpha)} \Vert \overline{F}_\mP \Vert_{L^2_{\alpha}(0, T; \mH)}^2
\le \Phi(U_0) + C \Vert f \Vert_{L^2_{\alpha}(0, T; \mH)}^2, \\
\Vert \overline{V}_\mP \Vert_{L^2_{\alpha}(0, T; \mH)}^2 = \sup_{t \in [0, T]} \int_0^t (t-s)^{\alpha-1} \Vert \overline{V}_\mP(s) \Vert^2 \diff s \le 
C \l( \Vert f \Vert_{L^2_{\alpha}(0, T; \mH)}^2 + \Phi(U_0) - \Phi_{\inf} \r),
\end{aligned}
\end{equation}
where the constant $C$ only depends on $\alpha$.
\end{Lemma}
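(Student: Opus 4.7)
The plan is to derive a discrete fractional chain rule for $\Phi$ along the sequence $\bU$, and then mimic the formal continuous energy identity that one would obtain by testing $f - D_c^{\alpha} u \in \partial\Phi(u)$ against $D_c^{\alpha} u$. Abbreviate $V_n = \l( D_\mP^{\alpha} \bU \r)_n$, so that the scheme \eqref{eq:discrete-frac-GF} reads $F_n - V_n \in \partial\Phi(U_n)$. For each $i \in \{0,\dots,n-1\}$, the subgradient inequality with $w = U_i$ gives $\Phi(U_i) \ge \Phi(U_n) + \langle F_n - V_n, U_i - U_n\rangle$. Since Proposition~\ref{prop:mat_prop1} yields $-\bK^{-1}_{\mP,ni}>0$ for $i<n$, multiplying by $-\bK^{-1}_{\mP,ni}$, summing over $i$, and using $\sum_{i=0}^{n-1}\bK^{-1}_{\mP,ni}(U_i - U_n) = V_n$ together with $\sum_{i=0}^{n-1}(-\bK^{-1}_{\mP,ni}) = \bK^{-1}_{\mP,nn}$ (a consequence of $\sum_{i=0}^{n}\bK^{-1}_{\mP,ni} = 0$) will produce the discrete chain rule
\[
  \l( D_\mP^{\alpha} \Phi(\bU) \r)_n + \|V_n\|^2 \le \langle F_n, V_n \rangle.
\]

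For any $\epsilon \in (0,1]$, Young's inequality then yields
\[
  \l( D_\mP^{\alpha} \Phi(\bU) \r)_n + (1-\epsilon)\|V_n\|^2 \le \frac{1}{4\epsilon}\|F_n\|^2.
\]
Let $\widehat G_\mP$ denote the Volterra interpolant of the sequence $\{\Phi(U_n)\}_{n=0}^N$ constructed via \eqref{eq:def-Uinterp}; by construction $D_c^{\alpha} \widehat G_\mP(t) = \l( D_\mP^{\alpha} \Phi(\bU) \r)_{n(t)}$ is piecewise constant on $\mP$ and $\widehat G_\mP(0) = \Phi(U_0)$. Applying \eqref{eq:Volterra-eq} to $\widehat G_\mP$ and inserting the previous display gives, for every $t \in [0,T]$,
\[
  \widehat G_\mP(t) - \Phi(U_0) + \frac{1-\epsilon}{\Gamma(\alpha)} \int_0^t (t-s)^{\alpha-1} \|\overline V_\mP(s)\|^2 \diff s \le \frac{1}{4\epsilon\Gamma(\alpha)} \int_0^t (t-s)^{\alpha-1} \|\overline F_\mP(s)\|^2 \diff s.
\]

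To conclude, I invoke Proposition~\ref{prop:interp-convex-comb}: both $\widehat U_\mP(t) = \sum_i \varphi_{\mP,i}(t) U_i$ and $\widehat G_\mP(t) = \sum_i \varphi_{\mP,i}(t)\Phi(U_i)$ are nonnegative convex combinations, hence
\[
  \Phi(\widehat U_\mP(t)) \le \widehat G_\mP(t), \qquad \widehat G_\mP(t) \ge \Phi_{\inf},
\]
by convexity of $\Phi$ and by $\Phi(U_i) \ge \Phi_{\inf}$. Choosing $\epsilon = 1$ in the previous display (so that the $\|V_n\|^2$ term drops out) gives the first estimate in \eqref{eq:V-stability}; choosing $\epsilon = 1/2$, moving the nonnegative quantity $\widehat G_\mP(t) - \Phi_{\inf}$ to the right, and taking $\sup_{t\in[0,T]}$ gives the second. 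The final bounds in \eqref{eq:V-stability} follow by applying Lemma~\ref{lemma:pw-interp-L2alpha} with $p=2$ to replace $\|\overline F_\mP\|_{L^2_{\alpha}(0,T;\mH)}$ by $\|f\|_{L^2_{\alpha}(0,T;\mH)}$. The main obstacle is the discrete chain rule: without the strict sign information on $\bK^{-1}_\mP$ from Proposition~\ref{prop:mat_prop1}, there would be no way to collapse the $n$ separate subgradient inequalities against the past states $U_0,\dots,U_{n-1}$ into the single relation that drives this discrete energy identity.
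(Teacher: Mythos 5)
Your proposal is correct and follows essentially the same route as the paper: the discrete chain rule $(D_\mP^{\alpha}\Phi(\bU))_n \le \langle F_n - V_n, V_n\rangle$ obtained from the subgradient inequalities weighted by $-\bK^{-1}_{\mP,ni}>0$, the Volterra representation of the interpolant of $\{\Phi(U_n)\}$, the convex-combination property from Proposition~\ref{prop:interp-convex-comb}, and Lemma~\ref{lemma:pw-interp-L2alpha} to pass from $\overline{F}_\mP$ to $f$. The only cosmetic difference is that you absorb the cross term pointwise via Young's inequality with a parameter $\epsilon$, whereas the paper uses Cauchy--Schwarz at the integral level and solves the resulting quadratic inequality; both yield the same constants.
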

\begin{proof}
Since $F_n -\l( D_\mP^{\alpha} \bU \r)_n \in \partial \Phi(U_n)$, one has
\[
\Phi(U_n) - \Phi(U_i) \le \langle F_n -\l( D_\mP^{\alpha} \bU \r)_n, U_n - U_i \rangle.
\]
Therefore noticing that $\bK^{-1}_{\mP,ni} < 0 $ for $i\in \{0, \ldots, n-1\}$, we get
\begin{equation}\label{eq:V-stability-proof1}
\begin{aligned}
\l( D_\mP^{\alpha} \Phi(\bU) \r)_n &= -\sum_{i=0}^{n-1} \bK^{-1}_{\mP,ni} \l( \Phi(U_n) - \Phi(U_i) \r) \le -\sum_{i=0}^{n-1} \bK^{-1}_{\mP,ni} \langle F_n -\l( D_\mP^{\alpha} \bU \r)_n, U_n - U_i \rangle \\
&= \langle F_n -\l( D_\mP^{\alpha} \bU \r)_n, \l( D_\mP^{\alpha} \bU \r)_n \rangle,
\end{aligned}
\end{equation}
where we denoted $\Phi(\bU) = \{\Phi(U_n)\}_{n=0}^N$.

We can now proceed to obtain the claimed estimates. To prove the first one, we use that
\[
\l( D_\mP^{\alpha} \Phi(\bU) \r)_n \le \langle F_n -\l( D_\mP^{\alpha} \bU \r)_n, \l( D_\mP^{\alpha} \bU \r)_n \rangle \le \frac14 \Vert F_n \Vert^2
\]
to obtain that for any $n$,
\[
\begin{aligned}
\Phi(U_n) &= \Phi(U_0) + \sum_{i=1}^{n} \bK_{\mP,ni} \l( D^{\alpha}_\mP \Phi(\bU) \r)_i
\le \Phi(U_0) + \frac14 \sum_{i=1}^{n} \bK_{\mP,ni} \Vert F_i \Vert^2 \\
& = \Phi(U_0) + \frac{1}{4\Gamma(\alpha)} \int_0^{t_n} (t_n - s)^{\alpha-1} \Vert \overline{F}_\mP(s) \Vert^2 \diff s  \le
\Phi(U_0) + C \Vert f \Vert_{L^2_{\alpha}(0, T; \mH)}^2,
\end{aligned} 
\]
where the constant $C$ depends only on $\alpha$. Now, since \Cref{prop:interp-convex-comb} has shown that $\widehat{U}_\mP$ is a convex combination of the values $U_n$, we have
\[
\Phi( \widehat{U}_\mP(t) ) = \Phi \l( \sum_{i=0}^N \varphi_{\mP,i}(t) U_i  \r)
\le \sum_{i=0}^N \varphi_{\mP,i}(t) \Phi \l( U_i \r) 
\le \max_n \Phi(U_n) \le \Phi(U_0) + C \Vert f \Vert_{L^2_{\alpha}(0, T; \mH)}^2,
\]
which finishes the proof of the first claim. 

We now proceed to prove the second claim. Using \eqref{eq:V-stability-proof1} we get
\[
\begin{aligned}
\Phi_{\inf} &\le \Phi( \widehat{U}_\mP(t) ) \leq
 \Phi(U_0) + \frac{1}{\Gamma(\alpha)} \int_0^{t} (t - s)^{\alpha-1} \langle \overline{F}_\mP(s) - \overline{V}_\mP(s), \overline{V}_\mP(s) \rangle \diff s \\
  &\le \Phi(U_0) + \frac{1}{\Gamma(\alpha)} \l( \int_0^{t} (t - s)^{\alpha-1} \Vert \overline{F}_\mP(s) \Vert^2 \diff s \r)^{1/2} \l( \int_0^{t} (t - s)^{\alpha-1} \Vert \overline{V}_\mP(s) \Vert^2 \diff s \r)^{1/2} \\
& - \frac{1}{\Gamma(\alpha)} \int_0^{t} (t - s)^{\alpha-1} \Vert \overline{V}_\mP(s) \Vert^2 \diff s,
\end{aligned} 
\]
for any $t \in [0,T]$.
This implies that
\[
  \int_0^{t} (t - s)^{\alpha-1} \Vert \overline{V}_\mP(s) \Vert^2 \diff s \leq \Vert \overline{F}_\mP \Vert_{L^2_{\alpha}(0, T; \mH)}^2 + 2\Gamma(\alpha)(\Phi(U_0) - \Phi_{\inf}),
\]
which, using \Cref{lemma:pw-interp-L2alpha}, implies the result.
\end{proof}

\begin{Remark}[the function $\widehat \Phi$]\label{remark:Phihat}
Notice that, during the course of the proof of the first estimate in \eqref{eq:V-stability} we also showed that, if we define $\widehat{\Phi}_\mP(t) = \sum_{i=0}^N \varphi_{\mP,i}(t) \Phi(U_i)$, then $\widehat{\Phi}(t)$ is the interpolation of $ \Phi_\mP(\bU)$ with piecewise constant Caputo derivative. Moreover,
\[
D_c^{\alpha} \widehat{\Phi}_\mP(t) \le \frac14 \l\Vert \overline{F}_\mP(t) \r\Vert^2.
\]
\end{Remark}

These estimates immediately yield a modulus of continuity estimate on the interpolant $\widehat{U}_\mP$ which is independent of the partition $\mP$.

\begin{Lemma}[H\"older continuity] \label{lemma:Hold_Uhat}
Let $\mP$ be any partition and $\bU \in \mH^{N}$ be the solution to \eqref{eq:discrete-frac-GF} associated to this partition. For $t_1, t_2 \in [0,T]$ the interpolant $\widehat{U}_\mP$, defined in \eqref{eq:def-Uinterp}, satisfies
\[
\Vert \widehat{U}(t_2) - \widehat{U}(t_1) \Vert 
\le C |t_2-t_1|^{\alpha/2} \l( \Vert f \Vert_{L^2_{\alpha}(0, T; \mH)}^2 + \Phi(U_0) - \Phi_{\inf} \r)^{1/2}
\]
where the constant $C$ depends only on $\alpha$.
\end{Lemma}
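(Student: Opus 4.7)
The plan is to exploit the Volterra representation \eqref{eq:def-Uhat-V} of $\widehat{U}_\mP$ and reduce everything to the $L^2_\alpha$--bound on $\overline{V}_\mP$ furnished by \Cref{lemma:bound-int-V}. Assume without loss of generality that $0 \le t_1 < t_2 \le T$. Then, from \eqref{eq:def-Uhat-V},
\[
  \widehat{U}_\mP(t_2) - \widehat{U}_\mP(t_1) = \frac{1}{\Gamma(\alpha)} \int_0^{t_1} \l[(t_2-s)^{\alpha-1} - (t_1-s)^{\alpha-1}\r] \overline{V}_\mP(s) \diff s + \frac{1}{\Gamma(\alpha)} \int_{t_1}^{t_2} (t_2-s)^{\alpha-1} \overline{V}_\mP(s) \diff s,
\]
and I will bound each piece in terms of $\Vert \overline{V}_\mP \Vert_{L^2_\alpha(0,T;\mH)}$, at which point the right-hand side of \eqref{eq:u-hold-cont} (with $u$ replaced by $\widehat U_\mP$) will follow from the second estimate in \eqref{eq:V-stability}.

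For the second integral, a straightforward Cauchy--Schwarz in the measure $(t_2-s)^{\alpha-1}\diff s$ yields
\[
  \l\Vert \int_{t_1}^{t_2} (t_2-s)^{\alpha-1} \overline{V}_\mP(s) \diff s \r\Vert \le \l(\int_{t_1}^{t_2} (t_2-s)^{\alpha-1}\diff s\r)^{1/2}\l(\int_{t_1}^{t_2}(t_2-s)^{\alpha-1}\Vert \overline{V}_\mP(s)\Vert^2\diff s\r)^{1/2} \le \frac{(t_2-t_1)^{\alpha/2}}{\sqrt{\alpha}} \Vert \overline{V}_\mP \Vert_{L^2_\alpha(0,T;\mH)},
\]
where the last factor is controlled by extending the inner integral to $[0,t_2]$ and invoking the very definition of $L^2_\alpha$.

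The first integral is the interesting one. Since $\alpha \in (0,1)$ we have $(t_1-s)^{\alpha-1} - (t_2-s)^{\alpha-1} \ge 0$ on $[0,t_1)$, so splitting this factor as the product of its two square roots and applying Cauchy--Schwarz gives
\[
  \l\Vert \int_0^{t_1} \l[(t_1-s)^{\alpha-1}-(t_2-s)^{\alpha-1}\r]\overline{V}_\mP(s)\diff s\r\Vert^2 \le \l(\int_0^{t_1}\l[(t_1-s)^{\alpha-1}-(t_2-s)^{\alpha-1}\r]\diff s\r)\l(\int_0^{t_1}\l[(t_1-s)^{\alpha-1}-(t_2-s)^{\alpha-1}\r]\Vert \overline{V}_\mP(s)\Vert^2\diff s\r).
\]
The first parenthesis integrates exactly to $\alpha^{-1}\l(t_1^\alpha - t_2^\alpha + (t_2-t_1)^\alpha\r) \le \alpha^{-1}(t_2-t_1)^\alpha$ (since $t_1^\alpha \le t_2^\alpha$); the second is dominated by dropping the negative piece and is therefore $\le \Vert \overline{V}_\mP\Vert_{L^2_\alpha(0,T;\mH)}^2$. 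Combining the two bounds and using \Cref{lemma:bound-int-V} concludes the proof.

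The only subtle step is the square-root splitting in the first integral, which lets us trade a cancellation estimate for the weight and a crude monotonicity bound for the $\overline{V}_\mP$-factor; any attempt to put all the weight on $\overline{V}_\mP$ loses the sharp $\alpha/2$ exponent. Once this is in place, the proof is purely computational and independent of the partition $\mP$.
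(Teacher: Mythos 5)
Your proof is correct, and it follows the same route as the paper: both reduce the statement to the fact that $D_c^{\alpha}\widehat{U}_\mP = \overline{V}_\mP$ is bounded in $L^2_{\alpha}(0,T;\mH)$ by \Cref{lemma:bound-int-V}. The only difference is that the paper simply cites \cite[Lemma 5.8]{LiLiuGradientFlow-19} for the embedding ``$D_c^{\alpha}w \in L^2_{\alpha}(0,T;\mH)$ implies $w \in C^{0,\alpha/2}([0,T];\mH)$,'' whereas you prove that embedding from scratch via the Volterra representation and the weighted Cauchy--Schwarz splitting of the kernel difference; your computations (the exact evaluation $\alpha^{-1}(t_1^{\alpha}-t_2^{\alpha}+(t_2-t_1)^{\alpha}) \le \alpha^{-1}(t_2-t_1)^{\alpha}$ and the crude domination of the weighted $\overline{V}_\mP$-integral by the $L^2_{\alpha}$ norm) all check out, so your version is a correct, self-contained rendering of the cited result.
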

\begin{proof}
As proved in \cite[Lemma 5.8]{LiLiuGradientFlow-19}, $D_c^{\alpha}w \in L^2_{\alpha}(0, T; \mH)$ guarantees $w \in C^{0, \alpha/2}([0,T]; \mH)$. Therefore using $D_c^{\alpha} \widehat{U} = \overline{V}_\alpha \in L^2_{\alpha}(0, T; \mH)$ and the estimate from \Cref{lemma:bound-int-V}, we obtain the result.
\end{proof}

Next we control the difference between discrete solutions corresponding to different partitions.

\begin{Lemma}[equicontinuity]\label{lemma:a-priori-U12}
Let, for $i=1,2$, $\mP_i$ be partitions of $[0,T]$ with maximal step size $\tau_i$, respectively, and denote by $\bU^{(i)}$ the associated solutions to \eqref{eq:discrete-frac-GF}. Let $\widehat{U}_i$ be their interpolations, defined by \eqref{eq:def-Uhat-V}, and $\overline{U}_i$ be their piecewise constant interpolations as in \eqref{eq:def-pw-const}. Assuming that $U^{(i)}_0 = U_0$ we have
\begin{align}
 \l\Vert \widehat{U}_1 - \widehat{U}_2 \r\Vert_{L^\infty(0,T;\mH)}
\le C \l( \tau_{1}^{\alpha/2} + \tau_{2}^{\alpha/2} \r) \l( \Vert f \Vert_{L^2_{\alpha}(0, T; \mH)}^2 + \Phi(U_0) - \Phi_{\inf} \r)^{1/2} \label{eq:a-priori-U12}, \\
\sup_{t \in [0,T]} \int_0^t (t-s)^{\alpha-1} \rho(\overline{U}_1(s), \overline{U}_2(s)) \diff s
\le C \l( \tau_1^{\alpha} + \tau_2^{\alpha} \r) \l( \Vert f \Vert_{L^2_{\alpha}(0, T; \mH)}^2 + \Phi(U_0) - \Phi_{\inf} \r), \label{eq:a-priori-rho-U12}
\end{align}
where the constant $C$ only depends on $\alpha$.
\end{Lemma}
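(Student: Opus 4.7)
Write $\delta(t) = \widehat{U}_1(t) - \widehat{U}_2(t)$, $\overline{V}_\delta = \overline{V}_1 - \overline{V}_2 = D_c^\alpha \delta$, $\overline{F}_\delta = \overline{F}_1 - \overline{F}_2$, $\delta_U = (\overline{U}_1 - \widehat{U}_1) - (\overline{U}_2 - \widehat{U}_2)$, and $X = \Vert f \Vert_{L^2_\alpha(0,T;\mH)}^2 + \Phi(U_0) - \Phi_{\inf}$. My plan is to derive a Caputo-type integrated energy inequality for $\Vert \delta \Vert^2$ by combining the pointwise discrete subdifferential inclusion with the chain-rule inequality \eqref{eq:Caputo-ineq-norm-sq}, then control each resulting term using the a priori bounds of \Cref{lemma:bound-int-V} together with the averaging estimates \Cref{lemma:pw-interp-L2alpha}, \Cref{lemma:alpha-int-f-Fbar}, and \Cref{lemma:alpha-int-w}.

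\emph{Deriving the energy inequality.} Since $\overline{F}_i - \overline{V}_i \in \partial \Phi(\overline{U}_i)$ a.e., monotonicity of the subdifferential and the definition of $\rho$ give $\rho(\overline{U}_1,\overline{U}_2) + \langle \overline{V}_\delta, \overline{U}_1 - \overline{U}_2 \rangle \leq \langle \overline{F}_\delta, \overline{U}_1 - \overline{U}_2 \rangle$. Writing $\overline{U}_1 - \overline{U}_2 = \delta + \delta_U$ in the $\overline{V}_\delta$-pairing and invoking \eqref{eq:Caputo-ineq-norm-sq} produces $\tfrac12 D_c^\alpha \Vert \delta \Vert^2 + \rho \leq \langle \overline{F}_\delta, \delta \rangle + \langle \overline{F}_\delta - \overline{V}_\delta, \delta_U \rangle$. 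Integrating against $(r-t)^{\alpha-1}/\Gamma(\alpha)$ via the Volterra identity \eqref{eq:Volterra-eq} and using $\delta(0) = 0$ yields
\[
  \Vert \delta(r) \Vert^2 + \tfrac{2}{\Gamma(\alpha)}\int_0^r (r-t)^{\alpha-1} \rho(\overline{U}_1,\overline{U}_2)\, dt \leq \tfrac{2}{\Gamma(\alpha)}\int_0^r (r-t)^{\alpha-1}\langle \overline{F}_\delta, \delta \rangle\, dt + \tfrac{2}{\Gamma(\alpha)}\int_0^r (r-t)^{\alpha-1}\langle \overline{F}_\delta - \overline{V}_\delta, \delta_U \rangle\, dt.
\]
The $\delta_U$ term is routine: Cauchy-Schwarz in the weighted $L^2_\alpha$ inner product, $\Vert \overline{F}_\delta \Vert_{L^2_\alpha} + \Vert \overline{V}_\delta \Vert_{L^2_\alpha} \leq CX^{1/2}$ (by \Cref{lemma:pw-interp-L2alpha} and \Cref{lemma:bound-int-V}), and $\Vert \delta_U \Vert_{L^2_\alpha} \leq C(\tau_1^\alpha + \tau_2^\alpha)X^{1/2}$ (from \Cref{lemma:alpha-int-w} applied to each $\widehat{U}_i$ with $p=2$) together bound it by $C(\tau_1^\alpha + \tau_2^\alpha)X$.

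\emph{The cross term (main obstacle).} The difficulty is $\int_0^r(r-t)^{\alpha-1}\langle \overline{F}_\delta, \delta \rangle\, dt$, because $\Vert \overline{F}_\delta \Vert_{L^2_\alpha}$ is not small in $\tau_i$. I plan to split $\delta(t) = \delta(r) - (\delta(r) - \delta(t))$, which separates the integral into (i) $\langle \int_0^r (r-t)^{\alpha-1}\overline{F}_\delta\, dt,\ \delta(r)\rangle$ and (ii) a remainder coupling $\overline{F}_\delta$ with $\delta(r) - \delta(t)$. For (i), writing $\overline{F}_\delta = -(f - \overline{F}_1) + (f - \overline{F}_2)$ and invoking \Cref{lemma:alpha-int-f-Fbar} \eqref{eq:alpha-int-f-Fbar} with $p=2$ gives $\Vert \int_0^r(r-t)^{\alpha-1}\overline{F}_\delta\, dt \Vert \leq C(\tau_1^{\alpha/2} + \tau_2^{\alpha/2})X^{1/2}$, so an AM-GM inequality produces $\tfrac{\Gamma(\alpha)}{4}\Vert \delta(r) \Vert^2 + C(\tau_1^\alpha + \tau_2^\alpha)X$, and the first summand gets absorbed into the LHS. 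For (ii), the identity $\delta(r) - \delta(t) = \tfrac1{\Gamma(\alpha)}[\int_t^r(r-s)^{\alpha-1}\overline{V}_\delta(s)\, ds + \int_0^t((r-s)^{\alpha-1} - (t-s)^{\alpha-1})\overline{V}_\delta(s)\, ds]$ allows rewriting the remainder, by Fubini, as pairings of $\overline{V}_\delta(s)$ against truncated fractional transforms of $\overline{F}_\delta$ in $t$; Cauchy-Schwarz against $\Vert \overline{V}_\delta \Vert_{L^2_\alpha} \leq CX^{1/2}$ and \Cref{lemma:alpha-int-f-Fbar} \eqref{eq:double-alpha-int-f-Fbar} with $p=2$ and $\beta = \alpha$ bound this remainder by $C(\tau_1^\alpha + \tau_2^\alpha)X$ as well.

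\emph{Conclusion.} Collecting all bounds, absorbing $\tfrac{\Gamma(\alpha)}{4}\Vert \delta(r) \Vert^2$ into the LHS, and taking $\sup_{r \in [0,T]}$ yields
\[
  \sup_{r \in [0,T]} \Vert \delta(r) \Vert^2 + \sup_{r \in [0,T]} \tfrac{2}{\Gamma(\alpha)}\int_0^r (r-t)^{\alpha-1}\rho(\overline{U}_1,\overline{U}_2)\, dt \leq C(\tau_1^\alpha + \tau_2^\alpha)X,
\]
from which both \eqref{eq:a-priori-U12} (using $(\tau_1^\alpha + \tau_2^\alpha)^{1/2} \leq \tau_1^{\alpha/2} + \tau_2^{\alpha/2}$) and \eqref{eq:a-priori-rho-U12} follow immediately. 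The hardest step is part (ii) of the cross-term analysis: the combination of two singular kernels $(r-t)^{\alpha-1}$ and $(t-s)^{\alpha-1}$ with the mean-zero structure of $f - \overline{F}_i$ must be handled delicately so that the $\tau^\alpha$ rate is preserved through all Fubini interchanges and Cauchy-Schwarz applications.
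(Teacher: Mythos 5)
Your setup is sound and matches the paper's: the monotonicity step producing $-\rho$, the $\delta_U$-term handled by weighted Cauchy--Schwarz together with \Cref{lemma:pw-interp-L2alpha}, \Cref{lemma:alpha-int-w} and \Cref{lemma:bound-int-V}, and the absorption of $\langle \int_0^r(r-t)^{\alpha-1}\overline F_\delta\,\diff t,\ \delta(r)\rangle$ via \eqref{eq:alpha-int-f-Fbar} are all correct (your $O(\tau_1^\alpha+\tau_2^\alpha)X$ bound for the $\delta_U$-term is even slightly sharper than what the paper records there). The genuine gap is part (ii) of your cross-term analysis. After substituting the Volterra identity for $\delta(r)-\delta(t)$ and applying Fubini, the piece that does not cancel against your part (i) pairs $\overline V_\delta(s)$ with
\[
Q(s)=\int_s^r (r-t)^{\alpha-1}(t-s)^{\alpha-1}\,\overline F_\delta(t)\,\diff t ,
\]
a kernel singular at \emph{both} endpoints $t=s$ and $t=r$ of the inner integral. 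Estimate \eqref{eq:double-alpha-int-f-Fbar} controls $\int_0^r(r-t)^{\alpha-1}\Vert\int_0^t(t-\sigma)^{\beta-1}(f-\overline F_\mP)(\sigma)\,\diff\sigma\Vert^2\diff t$, where the inner kernel's singularity sits at the outer integration variable and the inner integral starts at $0$; it applies neither to $Q(s)$ nor to the truncated transforms $\int_0^s(r-t)^{\alpha-1}\overline F_\delta(t)\,\diff t$ you describe. Closing your argument would require a new two-singularity estimate of the form $\int_0^r(r-s)^{1-\alpha}\Vert Q(s)\Vert^2\diff s\le C\tau^{2\alpha}\Vert f\Vert_{L^2_\alpha}^2$, which you neither state nor prove; using only the pointwise $O(\tau^{\alpha/2})$ gain of \eqref{eq:alpha-int-f-Fbar} on the inner integral loses half the rate and yields only $E\lesssim\tau^{\alpha/4}$.

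The paper sidesteps this by subtracting the fractional primitive $G(t)=\frac1{\Gamma(\alpha)}\int_0^t(t-s)^{\alpha-1}\bigl(\overline F_1-\overline F_2\bigr)(s)\,\diff s$ from $\widehat U_1-\widehat U_2$ \emph{before} invoking \eqref{eq:Caputo-ineq-norm-sq}: the troublesome term $\langle\overline F_\delta,\delta\rangle$ then cancels algebraically and is replaced by $-\langle D_c^\alpha(\delta-G),G\rangle$, whose fractional integral is bounded by $\Vert D_c^\alpha(\delta-G)\Vert_{L^2_\alpha}\Vert G\Vert_{L^2_\alpha}$, and $\Vert G\Vert_{L^2_\alpha}\le C(\tau_1^\alpha+\tau_2^\alpha)\Vert f\Vert_{L^2_\alpha}$ is precisely \eqref{eq:double-alpha-int-f-Fbar} with $\beta=\alpha$; the $L^\infty$ bound on $G$ from \eqref{eq:alpha-int-f-Fbar} then recovers $\Vert\delta\Vert_{L^\infty}$ from $\Vert\delta-G\Vert_{L^\infty}$. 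You should adopt this $G$-subtraction; all of your remaining estimates go through unchanged.
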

\begin{proof}
For almost  every $t\in [0,T]$, we have that
\begin{equation}
\label{eq:a-priori-U12-proof3}
  \l\langle D_c^\alpha ( \widehat{U}_1 - \widehat{U}_2 ), \widehat{U}_1 - \widehat{U}_2 \r\rangle = \mathrm{I} + \mathrm{II} +\mathrm{III},
\end{equation}
where
\begin{align*}
  \mathrm{I} &=  \l\langle (\overline{F}_2 - D_c^\alpha \widehat{U}_2) - (\overline{F}_1 - D_c^\alpha \widehat{U}_1), \overline{U}_1 - \overline{U}_2 \r\rangle \leq -\rho(\overline{U}_1,\overline{U}_2), \\
  \mathrm{II} &= \l\langle (\overline{F}_2 - D_c^\alpha \widehat{U}_2) - (\overline{F}_1 - D_c^\alpha \widehat{U}_1), (\widehat{U}_1-\overline{U}_1) - (\widehat{U}_2-\overline{U}_2) \r\rangle, \\
  \mathrm{III} &= \l\langle \overline{F}_1 - \overline{F}_2, \widehat{U}_1 - \widehat{U}_2 \r\rangle,
\end{align*}
where to bound $\mathrm{I}$ we used that $\overline{F}_i(t) - D_c^{\alpha}\widehat{U}_i(t) \in \partial \Phi(\overline{U}_i(t))$ and \Cref{def:sigma-rho}. Define now
\[
\begin{aligned}
G(t) &= \frac{1}{\Gamma(\alpha)}\int_0^t (t-s)^{\alpha-1} \l( \overline{F}_1(s)-\overline{F}_2(s) \r) \diff s \\
&= \frac{1}{\Gamma(\alpha)}\int_0^t (t-s)^{\alpha-1} \l( \overline{F}_1(s)-f(s) \r) \diff s - \frac{1}{\Gamma(\alpha)}\int_0^t (t-s)^{\alpha-1} \l( \overline{F}_2(s)-f(s) \r) \diff s,
\end{aligned}
\] 
so that $D_c^{\alpha} G(t) = \overline{F}_1(t)-\overline{F}_2(t)$ and by \eqref{eq:alpha-int-f-Fbar} of \Cref{lemma:alpha-int-f-Fbar} one further has
\begin{equation}\label{eq:a-priori-U12-proof4}
\Vert G \Vert_{L^\infty(0,T;\mH)} \le
C \l( \tau_1^{\alpha/2} + \tau_2^{\alpha/2} \r) \Vert f \Vert_{L^2_{\alpha}(0, T; \mH)},
\end{equation}
where $C$ is a constant that depends only on $\alpha$. Using these estimates, from \eqref{eq:a-priori-U12-proof3} we deduce that
\begin{equation}
\label{eq:a-priori-U12-proof5}
  \l\langle D_c^\alpha ( \widehat{U}_1 - \widehat{U}_2 -G), \widehat{U}_1 - \widehat{U}_2 -G\r\rangle + \rho(\overline{U}_1 , \overline{U}_2) \leq  \mathrm{II} -\l\langle D_c^\alpha ( \widehat{U}_1 - \widehat{U}_2 -G), G\r\rangle.
\end{equation}
Set $w = \widehat{U}_1 - \widehat{U}_2 -G$. By \eqref{eq:Caputo-ineq-norm-sq} we have that
\[
  \frac12 D_c^\alpha \| w(t) \|^2 + \rho( \overline{U}_1, \overline{U}_2 ) \leq \mathrm{II}-\l\langle D_c^\alpha w, G\r\rangle ,
\]
and, using \eqref{eq:Volterra-eq} and \eqref{eq:a-priori-U12-proof4}, we then conclude
\begin{multline*}
  \frac12 \| \widehat{U}_1(t) - \widehat{U}_2(t) \|^2 + \frac1{\Gamma(\alpha)} \int_0^t (t-s)^{\alpha-1} \rho(\overline{U}_2(s), \overline{U}_2(s)) \diff s \leq  \\
  \frac{2}{\Gamma(\alpha)} \int_0^t (t-s)^{\alpha-1} \left( \mathrm{II}(s) -\l\langle D_c^\alpha w(s), G(s)\r\rangle  \right) \diff s
  + C \l( \tau_1^{\alpha/2} + \tau_2^{\alpha/2} \r) \Vert f \Vert_{L^2_{\alpha}(0, T; \mH)}.
\end{multline*}
It remains then to estimate the fractional integral on the right hand side. We estimate each term separately.

First, owing to \Cref{lemma:pw-interp-L2alpha} and \Cref{lemma:bound-int-V} we have, for $i=1,2$, that
\[
\l\Vert \overline{F}_i - D_c^{\alpha}\widehat{U}_i \r\Vert_{L^2_{\alpha}(0, T; \mH)} \le C \l( \Vert f \Vert_{L^2_{\alpha}(0, T; \mH)}^2 + \Phi(U_0) - \Phi_{\inf} \r)^{1/2},
\]
Therefore using the Cauchy-Schwarz inequality, for any $t \in [0,T]$, we have
\[
\begin{aligned}
\int_0^t (t-s)^{\alpha-1} |\mathrm{II}(s)| \diff s &\le C \l( \Vert f \Vert_{L^2_{\alpha}(0, T; \mH)}^2 + \Phi(U_0) - \Phi_{\inf} \r)^{1/2} \sum_{i=1}^2 \l\Vert \widehat{U}_i -\overline{U}_i \r\Vert_{L^2_{\alpha}(0, T; \mH)} .
\end{aligned}
\]
Recalling that $\overline{U}_i(t) = \widehat{U}_i(\tceil_i)$ we can invoke \Cref{lemma:alpha-int-w} and, again, \Cref{lemma:bound-int-V} to arrive at
\[
  \int_0^t (t-s)^{\alpha-1} |\mathrm{II}(s)| \diff s \leq C (\tau_1^{\alpha/2} + \tau_2^{\alpha/2} )\l( \Vert f \Vert_{L^2_{\alpha}(0, T; \mH)}^2 + \Phi(U_0) - \Phi_{\inf} \r).
\]

Finally, for the remaining term, we use the Cauchy-Schwarz inequality and get
\[
\begin{aligned}
\int_0^t (t-s)^{\alpha-1} \l| \l\langle D_c^\alpha w, G\r\rangle(s) \r| \diff s 
&\le \l( \int_0^t (t-s)^{\alpha-1} \l\Vert D_c^{\alpha} w(s)\r\Vert^2 \diff s \r)^{1/2} 
\l( \int_0^t (t-s)^{\alpha-1} \l\Vert G(s) \r\Vert^2 \diff s \r)^{1/2} \\
&\le \l\Vert D_c^{\alpha} w \r\Vert_{L^2_{\alpha}(0, T; \mH)} \l\Vert G \r\Vert_{L^2_{\alpha}(0, T; \mH)}
\end{aligned}
\]
To estimate the norm of $G$ we apply \eqref{eq:double-alpha-int-f-Fbar} from \Cref{lemma:alpha-int-f-Fbar} with $\beta = \alpha$ to obtain
\[
\l\Vert G \r\Vert_{L^2_{\alpha}(0, T; \mH)} \le C \l( \tau_1^{\alpha}
+ \tau_1^{\alpha} \r) \Vert f \Vert_{L^2_{\alpha}(0, T; \mH)}.
\]
Furthermore, \Cref{lemma:pw-interp-L2alpha} and \Cref{lemma:bound-int-V} guarantee that
\[
\l\Vert D_c^{\alpha} w \r\Vert_{L^2_{\alpha}(0, T; \mH)} \le C \l( \Vert f \Vert_{L^2_{\alpha}(0, T; \mH)}^2 + \Phi(U_0) - \Phi_{\inf} \r)^{1/2}.
\]
Combining all estimates proves the desired result.
\end{proof} 

We are finally able to prove \Cref{thm:exist-uniq-frac-GF}. We will follow the same approach as in \cite[Theorem 5.10]{LiLiuGradientFlow-19}; we will pass to the limit $\tau_i \downarrow 0$ and study the limit of discrete solutions $\widehat{U}_i$.

\begin{proof}[Proof of \Cref{thm:exist-uniq-frac-GF}]
Let us first prove uniqueness of energy solutions. Suppose that we have two energy solutions $u_1, u_2$ to \eqref{eq:theGradFlow}. Let $t \in (0,T)$ be arbitrary and $h>0$ be sufficiently small so that $(t-h,t+h) \subset [0,T]$. Setting as test function, in the EVI that characterizes $u_1$, the function $w = u_1 - \chi_{(t-h,t+h)}(u_1-u_2)$ and vice versa, and adding the ensuing inequalities we obtain
\[
\int_{t-h}^{t+h} \langle D_c^{\alpha} u_1(s) - D_c^{\alpha} u_2(s), u_1(s) - u_2(s) \rangle \diff s \le 0,
\]
meaning that $\langle D_c^{\alpha} u_1(t) - D_c^{\alpha} u_2(t), u_1(t) - u_2(t) \rangle \le 0$ for almost every $t \in [0,T]$.

Define $d(t)= \Vert u_1(t) - u_2(t) \Vert^2$. Since $u_1, u_2 \in L^2(0,T;\mH)$ we clearly have $d \in L^1(0,T;\mR)$. Furthermore, 
\[
\fint_0^t |d(s)|\diff s 
 \le 2 \fint_0^t \left( \Vert (u_1(s) - u_0) \Vert^2 + \Vert (u_2(s) - u_0) \Vert^2 \right) \diff s \to 0,
\]
as $t \downarrow 0$, from \Cref{def:energy-sol}. Using \eqref{eq:Caputo-ineq-norm-sq} we then have
\[
D_c^{\alpha} d(t) \le
2\langle D_c^{\alpha} u_1(t) - D_c^{\alpha} u_2(t), u_1(t) - u_2(t) \rangle
\le 0
\]
in the distributional sense. Combining with the facts that $d \ge 0$ and $\fint_0^t | d(s) | \diff s \to 0$ we obtain, by \cite[Corollary 3.8]{LiLiuGeneralized-18}, $d(t) = 0$. This proves the uniqueness.

We now turn our attention to existence. Let $\{\mP_k\}_{k=1}^\infty$ be a sequence of partitions such that $\tau_k \downarrow 0$ as $k \to \infty$. We denote by $\bU^{(k)}$ the discrete solution, on partition $\mP_k$, given by \eqref{eq:discrete-frac-GF} with $U^{(k)}_0 = u_0$. The symbols $\widehat{U}_k$, $\overline{V}_k$ and $\overline{F}_k$ carry analogous meaning. Owing to \Cref{lemma:a-priori-U12} there exists $u \in C([0,T];\mH)$ such that $\widehat{U}_k$ converges to $u$ in $C([0,T];\mH)$.  

The embedding of \Cref{proposition:embeddingLpalpha} and an application of \Cref{lemma:bound-int-V} shows that there is a subsequence for which $\overline{V}_{k_j} \rightharpoonup v$ in $L^2(0, T; \mH)$ as $j \to \infty$. Moreover, we can again appeal to \Cref{lemma:bound-int-V} to see that, for every $t \in [0,T]$, the sequence
\[
  (t-\cdot)^{\frac{\alpha-1}{2}}\overline{V}_{k_j}(\cdot)
\]
is uniformly bounded in $L^2(0,t;\mH)$ so that by passing to a further, not retagged, subsequence
\begin{equation}\label{eq:exist-uniq-proof1}
(t-\cdot)^{\frac{\alpha-1}{2}}\overline{V}_{k_j}(\cdot) \rightharpoonup
(t-\cdot)^{\frac{\alpha-1}{2}} v(\cdot) \; \textrm{ in } L^2(0, t; \mH)
\end{equation}
for any $t \in [0,T]$. This, in addition, shows that $v \in L^2_{\alpha}(0, T; \mH)$ so that if we define
\begin{equation}
\label{eq:volterrafortilde}
\wt{u}(t) = u_0 + \frac{1}{\Gamma(\alpha)} \int_0^t (t-s)^{\alpha-1} v(s) \diff s
\end{equation}
then $D_c^{\alpha} \wt{u} = v$.

Recall that for any $j \in \mathbb{N}$ and any $t \in [0,T]$ we have that
\[
\widehat{U}_{k_j}(t) = u_0 + \frac{1}{\Gamma(\alpha)} \int_0^t (t-s)^{\alpha-1} \overline{V}_{k_j}(s) \diff s.
\]
Since, for an arbitrary $w \in \mH$ we have that $(t-\cdot)^{\frac{\alpha-1}{2}} w$ is in $L^2(0, t; \mH)$ , we can use \eqref{eq:exist-uniq-proof1} to obtain that
\[
\begin{aligned}
 \lim_{j \to \infty} \langle \widehat{U}_{k_j}(t), w \rangle
&= \lim_{j \to \infty} \l\langle u_0 + \frac{1}{\Gamma(\alpha)} \int_0^t (t-s)^{\alpha-1} \overline{V}_{k_j}(s) \diff s, w \r\rangle \\ 
&= \l\langle u_0 + \frac{1}{\Gamma(\alpha)}  \int_0^t (t-s)^{\alpha-1} v(s) \diff s,
w \r\rangle = \langle \wt{u}(t), w \rangle.
\end{aligned}
\]
The statement above holds for any $w \in \mH$ and all $t \in [0,T]$. Thus, 
\begin{equation}\label{eq:weak-convergence-Unkt}
  \widehat{U}_{k_j}(t) \rightharpoonup \wt{u}(t),
\end{equation}
in $\mH$. However, this implies that $\wt{u} = u$, as $\widehat{U}_{k_j}$ converges to $u$ in $C([0,T];\mH)$. Therefore $D_c^\alpha u= v\in L^2_{\alpha}(0, T; \mH)$ and, by \Cref{lemma:bound-int-V}, we have the estimate
\[
\Vert v \Vert_{L^2_{\alpha}(0, T; \mH)} \le C \l( \Vert f \Vert_{L^2_{\alpha}(0, T; \mH)}^2 + \Phi(U_0) - \Phi_{\inf} \r)^{1/2},
\]
for some constant $C$ depending on $\alpha$. As in the proof of \Cref{lemma:Hold_Uhat} this implies that \eqref{eq:u-hold-cont} holds. From this, we also see that the initial condition is attained in the required sense.


It remains to show that the EVI \eqref{eq:def-energy-sol} holds for $u$. From the construction of discrete solutions, one derives that for any $w \in L^2(0,T;\mH)$
\begin{equation}\label{eq:energy-sol-discrete}
\int_0^T \l( \Phi(\widehat{U}_{k_j}(t)) - \Phi(w(t)) \r) \diff t \le \int_0^T \langle \overline{F}_{k_j}(t) - \overline{V}_{k_j}(t), \widehat{U}_{k_j}(t) - w(t) \rangle \diff t.
\end{equation}
We will pass to the limit in this inequality. For the right hand side, it suffices to observe that $\widehat{U}_{k_j} \to u$ in $C([0,T];\mH)$, $\overline{V}_{k_j} \rightharpoonup v$ in $L^2(0, T; \mH)$ and $\overline{F}_{k_j} \to f$ in $L^2(0,T;\mH)$. Thus,
\[
\int_0^T \langle \overline{F}_{k_j}(t) - \overline{V}_{k_j}(t), \widehat{U}_{k_j}(t) - w(t) \rangle \diff t
\to \int_0^T \langle f(t) - v(t), u(t) - w(t) \rangle \diff t.
\]
For the left hand side, the uniform convergence of $\widehat{U}_{k_j}$ and the lower semicontinuity of $\Phi$, give
\[
\Phi(u(t)) \le \liminf_{j \to \infty} \Phi\l(\widehat{U}_{k_j}(t)\r),
\]
and hence
\[
\int_0^T \Phi(u(t)) - \Phi(w(t)) \diff t \le \int_0^T \langle f(t) - v(t), u(t) - w(t) \rangle \diff t.
\]
It remains to recall that $D_c^{\alpha}u = v \in L^2(0,T;\mH)$ to conclude that, according to \Cref{def:energy-sol}, $u$ is an energy solution.
\end{proof}

\begin{Remark}[other notion of solution]
The choice of $u \in L^2(0,T;\mH)$ and $D_c^{\alpha}u \in L^2(0,T;\mH)$ in \Cref{def:energy-sol} is to guarantee that \eqref{eq:def-energy-sol} makes sense. It is also necessary in the proof of uniqueness. However, other choices of spaces are also possible. For example, one could consider the following definition instead of \Cref{def:energy-sol}: $u \in L^{\infty}(0,T;\mH)$ is a solution to \eqref{eq:theGradFlow} if:
\begin{enumerate}[(i)]
  \item $\lim_{t \downarrow 0} \fint_0^t \Vert u(s) - u_0 \Vert \diff s = 0$;
  \item $D_c^{\alpha}u  \in L^1(0,T;\mH)$; and
  \item for any $w \in L^{\infty}(0,T;\mH)$,
	\begin{equation}\label{eq:def-energy-sol2}
	\int_0^T \l[ \langle D_c^{\alpha} u(t), u(t) - w(t) \rangle + \Phi(u(t)) - \Phi(w(t)) \r] \diff t \le \int_0^T \langle f(t), u(t) - w(t) \rangle \diff t .
	\end{equation}
\end{enumerate}
\Cref{thm:exist-uniq-frac-GF} also holds for this new definition. However, at least with our techniques, the requirements on the data $u_0 \in D(\Phi)$ and $f \in L^2_\alpha(0,T;\mH)$ do not change.
\end{Remark}

\section{Fractional gradient flows: Numerics}\label{sec:numerics}

Since the existence of an energy solution was proved by a rather constructive approach, namely a fractional minimizing movements scheme, it makes sense to provide error analyses for this scheme. We will provide an a priori error estimate which, in light of the smoothness $u \in C^{0, \alpha/2}([0,T];\mH)$ proved in \Cref{thm:exist-uniq-frac-GF}, is optimal. In addition, in the spirit of \cite{MR1737503} we will provide an a posteriori error analysis.

\subsection{A priori error analysis}

The a priori error estimate reads as follows. We comment that this result gives us a better rate compared to \cite[Theorem 5.10]{LiLiuGradientFlow-19}. 

\begin{Theorem}[a priori I]\label{thm:aprioriexistence}
Let $u$ be the energy solution of \eqref{eq:theGradFlow}. Given a partition $\mP$, of maximal step size $\tau$, let $\bU\in \mH^N$ be the discrete solution defined by \eqref{eq:discrete-frac-GF} starting from $U_0 \in D(\Phi)$. Let $\widehat{U}_\mP$ and $\overline{U}_\mP$ be defined as in \eqref{eq:def-Uhat-V} and \eqref{eq:def-pw-const}, respectively. Then we have,
\begin{align}
\l\Vert u - \widehat{U}_\mP \r\Vert_{L^\infty(0,T;\mH)}
\le \l\Vert u_0 - U_0 \r\Vert + C \tau^{\alpha/2} \l( \Vert f \Vert_{L^2_{\alpha}(0, T; \mH)}^2 + \Phi_0 - \Phi_{\inf} \r)^{1/2} \label{eq:a-priori-uU}, \\
\sup_{t \in [0,T]} \int_0^t (t-s)^{\alpha-1} \rho(u(s), \overline{U}_\mP(s)) \diff s
\le \l\Vert u_0 - U_0 \r\Vert^2 + C \tau^{\alpha} \l( \Vert f \Vert_{L^2_{\alpha}(0, T; \mH)}^2 + \Phi_0 - \Phi_{\inf} \r), \label{eq:a-priori-rho-uU}
\end{align}
where $\Phi_0 = \max\{\Phi(U_0), \Phi(u_0)\}$, and the constant $C$ depends only on $\alpha$.
\end{Theorem}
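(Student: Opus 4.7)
The plan is to adapt the strategy of \Cref{lemma:a-priori-U12}: instead of comparing two discrete solutions, I compare the energy solution $u$---which, by \Cref{thm:exist-uniq-frac-GF}, is also a strong solution---against the discrete interpolant $\widehat{U}_\mP$. Since $f(t)-D_c^\alpha u(t) \in \partial\Phi(u(t))$ and $\overline{F}_\mP(t)-D_c^\alpha\widehat{U}_\mP(t) \in \partial\Phi(\overline{U}_\mP(t))$ for a.e.~$t \in (0,T)$, an application of \Cref{def:sigma-rho} gives
\[
  \langle D_c^\alpha(u - \widehat{U}_\mP)(t), u(t) - \overline{U}_\mP(t)\rangle + \rho(u(t),\overline{U}_\mP(t)) \le \langle f(t) - \overline{F}_\mP(t), u(t) - \overline{U}_\mP(t)\rangle,
\]
which is the direct counterpart of the starting point of the proof of \Cref{lemma:a-priori-U12}.

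Next, I introduce the auxiliary function
\[
  G(t) = \frac{1}{\Gamma(\alpha)}\int_0^t (t-s)^{\alpha-1}\l(f(s) - \overline{F}_\mP(s)\r)\diff s,
\]
so that $D_c^\alpha G = f - \overline{F}_\mP$ and $G(0) = 0$, and define $w = u - \widehat{U}_\mP - G$, with $w(0) = u_0 - U_0$. Splitting $u - \overline{U}_\mP = (u - \widehat{U}_\mP) + (\widehat{U}_\mP - \overline{U}_\mP)$, substituting $D_c^\alpha(u - \widehat{U}_\mP) = D_c^\alpha w + D_c^\alpha G$, and expanding $u - \widehat{U}_\mP = w + G$ throughout, a direct algebraic manipulation shows that the $D_c^\alpha G$ contributions cancel and the inequality reduces to
\[
  \langle D_c^\alpha w(t), w(t)\rangle + \rho(u(t), \overline{U}_\mP(t)) \le \langle D_c^\alpha w(t), \overline{U}_\mP(t) - \widehat{U}_\mP(t) - G(t)\rangle.
\]
Invoking \eqref{eq:Caputo-ineq-norm-sq} and integrating via the Volterra representation \eqref{eq:Volterra-eq} then yields
\[
  \|w(t)\|^2 + \frac{2}{\Gamma(\alpha)}\int_0^t (t-s)^{\alpha-1}\rho(u(s),\overline{U}_\mP(s))\diff s \le \|u_0 - U_0\|^2 + \frac{2}{\Gamma(\alpha)}\int_0^t (t-s)^{\alpha-1}\langle D_c^\alpha w, \overline{U}_\mP - \widehat{U}_\mP - G\rangle\diff s.
\]

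It then remains to bound the last integral. Cauchy-Schwarz in $L^2_\alpha$ gives the product $C\|D_c^\alpha w\|_{L^2_\alpha}\|\overline{U}_\mP - \widehat{U}_\mP - G\|_{L^2_\alpha}$. The first factor is $O(1)$, controlled by $C(\|f\|_{L^2_\alpha}^2 + \Phi_0 - \Phi_\inf)^{1/2}$, by combining the a priori bound on $D_c^\alpha u$ obtained in \Cref{thm:exist-uniq-frac-GF}, the bound on $\overline{V}_\mP = D_c^\alpha\widehat{U}_\mP$ from \Cref{lemma:bound-int-V}, and \Cref{lemma:pw-interp-L2alpha} applied to $D_c^\alpha G = f - \overline{F}_\mP$. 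For the second factor, \Cref{lemma:alpha-int-w} with $p=2$ gives $\|\overline{U}_\mP - \widehat{U}_\mP\|_{L^2_\alpha} \le C\tau^\alpha(\|f\|_{L^2_\alpha}^2 + \Phi_0 - \Phi_\inf)^{1/2}$, while estimate \eqref{eq:double-alpha-int-f-Fbar} with $\beta=\alpha$, $p=2$ yields $\|G\|_{L^2_\alpha} \le C\tau^\alpha \|f\|_{L^2_\alpha}$. These bounds produce \eqref{eq:a-priori-rho-uU} directly and also $\|w\|_{L^\infty}^2 \le \|u_0-U_0\|^2 + C\tau^\alpha(\|f\|_{L^2_\alpha}^2 + \Phi_0 - \Phi_\inf)$. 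Since $\|u - \widehat{U}_\mP\|_{L^\infty} \le \|w\|_{L^\infty} + \|G\|_{L^\infty}$ and \eqref{eq:alpha-int-f-Fbar} supplies $\|G\|_{L^\infty} \le C\tau^{\alpha/2}\|f\|_{L^2_\alpha}$, estimate \eqref{eq:a-priori-uU} follows by extracting square roots and applying the triangle inequality.

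The main obstacle I anticipate is the algebraic verification that the $D_c^\alpha G$ contributions cancel after the substitution $w = u - \widehat{U}_\mP - G$; this cancellation is precisely what makes $G$ the right auxiliary function, and it is the same mechanism at work in the proof of \Cref{lemma:a-priori-U12}. Once it is secured, the rest is essentially bookkeeping with the preparatory estimates of \Cref{sec:Notation} and the a priori bounds already in hand.
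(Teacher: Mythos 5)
Your proposal is correct and follows exactly the route the paper intends: its proof of this theorem is literally the instruction to repeat the argument of \Cref{lemma:a-priori-U12} with one of the two discrete solutions replaced by the exact (strong) solution, which is what you carry out, using the same corrector $G$, the same cancellation after substituting $w=u-\widehat{U}_\mP-G$, and the same supporting estimates (\Cref{lemma:pw-interp-L2alpha}, \Cref{lemma:alpha-int-f-Fbar}, \Cref{lemma:alpha-int-w}, \Cref{lemma:bound-int-V}). Your handling of $U_0\neq u_0$ via $w(0)=u_0-U_0$ and $\Phi_0=\max\{\Phi(u_0),\Phi(U_0)\}$ is precisely the ``no essential difficulty'' the authors allude to, so the details you supply are a faithful completion of the omitted proof.
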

\begin{proof}
The proof can be obtained by following the same procedure employed in the proof of \Cref{lemma:a-priori-U12}. In the current situation, however, instead of comparing two discrete solutions we compare the exact and discrete ones. The only difference is that we allow $U_0 \neq u_0$ here, but this presents no essential difficulty. For brevity, we skip the details.
\end{proof}

\subsection{A posteriori error analysis} \label{sec:post-error}
Let us now provide an a posteriori error estimate between the discretization in \eqref{eq:discrete-frac-GF} and the solution of \eqref{eq:theGradFlow}. We will also show how, from this a posteriori error estimator, an a priori error estimate can be derived. 
Let us first introduce the a posteriori error estimator.

\begin{Definition}[error estimator]
Let $\mP$ be a partition of $[0,T]$ as in \eqref{eq:def-partition}, and $\bU \in \mH^N$ denote the discrete solution given by \eqref{eq:discrete-frac-GF}. We define the error estimator function as
\begin{equation}\label{eq:post-error-mEt}
  \mE_\mP(t) =  \mE_{\mP,1}(t) + \mE_{\mP,2}(t),
\end{equation}
where 
\[
\mE_{\mP,1}(t) = \langle D_c^{\alpha} \widehat{U}_\mP(t) - \overline{F}_\mP(t), \widehat{U}_\mP(t) - \overline{U}_\mP(t) \rangle, \quad
\mE_{\mP,2}(t) = \Phi(\widehat{U}_\mP(t)) - \Phi(\overline{U}_\mP(t)).
\]
\end{Definition}

Notice that the quantity $\mE_\mP(t)$ is nonnegative because $\overline{F}_\mP(t) - D_c^{\alpha} \widehat{U}_\mP(t) = F_{n(t)} -\l( D^{\alpha}_\mP \bU \r)_{n(t)} \in \partial \Phi(U_{n(t)}) = \partial\Phi(\overline{U}_\mP(t))$. It is also, in principle, computable since it only depends on data, and the discrete solution $\bU$. It is then a suitable candidate for an a posteriori error estimator.

The derivation of an a posteriori error estimate begins with the observation that, for any $w \in \mH$, we have
\begin{equation}\label{eq:discrete-energy-ineq}
\begin{aligned}
& \langle D_c^{\alpha} \widehat{U}_\mP(t) - f(t), \widehat{U}_\mP(t) - w \rangle + \Phi(\widehat{U}_\mP(t)) - \Phi(w) \\
&= \mE_\mP(t) + \langle \overline{F}_\mP(t) - D_c^{\alpha} \widehat{U}_\mP(t), w - \overline{U}_\mP(t) \rangle
+ \Phi(\overline{U}_\mP(t)) - \Phi(w) + \langle f(t) - \overline{F}_\mP(t),  w- \widehat{U}_\mP(t) \rangle \\
&\le \mE_\mP(t) + \langle f(t) - \overline{F}_\mP(t),  w- \widehat{U}_\mP(t) \rangle - \sigma(\overline{U}_\mP(t); w).
\end{aligned}
\end{equation}
In other words, the function $\widehat{U}_\mP$ solves an EVI similar to \eqref{eq:sol-ineq-@t-strong} but with additional terms on the right hand side. We can then compare the EVIs by a now standard approach, that is, set $w = u(t)$ in \eqref{eq:discrete-energy-ineq} and $w=\widehat{U}_\mP(t)$ in \eqref{eq:sol-ineq-@t-strong}, respectively, to see that
\begin{multline}\label{eq:discrete-error-ineq1}
\l\langle D_c^{\alpha} \l( \widehat{U}_\mP-u\r)(t), \widehat{U}_\mP(t) - u(t) \r\rangle + \sigma(\overline{U}_\mP(t); u(t)) + \sigma(u(t); \widehat{U}_\mP(t))
 \le \\ \mE_\mP(t) + \langle f(t) - \overline{F}_\mP(t),  u(t) - \widehat{U}_\mP(t) \rangle
\end{multline}
for almost every $t \in [0,T]$. Consider the following notions of error:
\begin{equation}\label{eq:def-Error}
\begin{aligned}
&E = \l( \sup_{t \in [0,T]} \l\{E^2_{\mH}(t) + E^2_{\sigma}(t) \r\} \r)^{1/2},
\quad E_{\mH}(t) = \Vert u(t) - \widehat{U}_\mP(t) \Vert, \\
&E_{\sigma}(t) = \l( \frac{2}{\Gamma(\alpha)} \int_0^t (t-s)^{\alpha-1} \l[\sigma(u(s); \widehat{U}_\mP(s)) + \sigma(\overline{U}_\mP(s); u(s))\r] \diff s \r)^{1/2}.
\end{aligned}
\end{equation}
We have the following error estimate for $E$.

\begin{Theorem}[a posteriori]\label{thm:post-error}
Let $u$ be the energy solution of \eqref{eq:theGradFlow}. Let $\mP$ be a partition of $[0,T]$ defined as in \eqref{eq:def-partition} and let $\bU \in \mH^N$ be the discrete solution given by \eqref{eq:discrete-frac-GF} starting from $U_0 \in D(\Phi)$. Let $E$ and $\mE_\mP$ be defined in \eqref{eq:def-Error} and \eqref{eq:post-error-mEt}, respectively, The following a posteriori error estimate holds
\begin{equation}\label{eq:post-error}
E \le \l( \Vert u_0 - U_0\Vert^2 + \frac{2}{\Gamma(\alpha)} \Vert \mE_\mP \Vert_{L^1_{\alpha}(0,T;\mH) } \r)^{1/2} \!+
\frac{2}{\Gamma(\alpha)} \Vert f - \overline{F}_\mP \Vert_{L^1_{\alpha}(0,T;\mH)}.
\end{equation}
\end{Theorem}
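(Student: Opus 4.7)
The plan is to turn the pointwise inequality \eqref{eq:discrete-error-ineq1}, which has already been established just before the statement, into an integral control on $E_\mH^2 + E_\sigma^2$ via the fractional chain-rule inequality \eqref{eq:Caputo-ineq-norm-sq} and the Volterra representation \eqref{eq:Volterra-eq}, and then to absorb the perturbation term $\langle f-\overline{F}_\mP, u-\widehat{U}_\mP\rangle$ into $E$ through a quadratic inequality.

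More precisely, I would first apply \eqref{eq:Caputo-ineq-norm-sq} to the function $t\mapsto \widehat{U}_\mP(t)-u(t)$, which belongs to $L^2(0,T;\mH)$ and whose generalized Caputo derivative lies in $L^2(0,T;\mH)$ by \Cref{thm:exist-uniq-frac-GF} and \Cref{lemma:bound-int-V}. This yields
\[
  \tfrac12 D_c^\alpha \Vert \widehat{U}_\mP(t) - u(t) \Vert^2 \le \l\langle D_c^\alpha(\widehat{U}_\mP - u)(t), \widehat{U}_\mP(t) - u(t) \r\rangle
\]
for a.e.\ $t\in(0,T)$. Combining with \eqref{eq:discrete-error-ineq1} I obtain the differential inequality
\[
  \tfrac12 D_c^\alpha \Vert \widehat{U}_\mP(t)- u(t) \Vert^2 + \sigma(\overline{U}_\mP(t);u(t)) + \sigma(u(t);\widehat{U}_\mP(t)) \le \mE_\mP(t) + \l\langle f(t)-\overline{F}_\mP(t), u(t)-\widehat{U}_\mP(t) \r\rangle.
\]
Next, I integrate this inequality using the Volterra identity \eqref{eq:Volterra-eq} applied to the scalar function $t\mapsto \Vert\widehat{U}_\mP(t)-u(t)\Vert^2$, whose value at $t=0$ equals $\Vert U_0-u_0\Vert^2$. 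The left-hand side then reads exactly $E_\mH^2(t) + E_\sigma^2(t)$ in the notation of \eqref{eq:def-Error}, and the right-hand side produces three contributions: the initial error $\Vert U_0-u_0\Vert^2$, the fractional integral of $\mE_\mP$ (which is bounded in absolute value by $\tfrac{2}{\Gamma(\alpha)}\Vert \mE_\mP\Vert_{L^1_\alpha(0,T;\mH)}$), and the fractional integral of the data perturbation.

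For the data perturbation term I use Cauchy--Schwarz in $\mH$ pointwise and then bound $\Vert u(s)-\widehat{U}_\mP(s)\Vert = E_\mH(s) \le E$ uniformly in $s$, which gives
\[
  \tfrac{2}{\Gamma(\alpha)}\int_0^t (t-s)^{\alpha-1} \l\langle f-\overline{F}_\mP, u-\widehat{U}_\mP\r\rangle \diff s \le \tfrac{2}{\Gamma(\alpha)}\, E \,\Vert f-\overline{F}_\mP\Vert_{L^1_\alpha(0,T;\mH)}.
\]
Taking the supremum over $t\in[0,T]$ on both sides, and abbreviating $b = \tfrac{2}{\Gamma(\alpha)}\Vert f-\overline{F}_\mP\Vert_{L^1_\alpha}$ and $c = \Vert U_0-u_0\Vert^2 + \tfrac{2}{\Gamma(\alpha)}\Vert \mE_\mP\Vert_{L^1_\alpha}$, I arrive at the quadratic inequality $E^2 \le c + bE$. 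Solving it and using the elementary bound $\sqrt{b^2+4c}\le b + 2\sqrt{c}$ yields $E \le b + \sqrt{c}$, which is exactly \eqref{eq:post-error}.

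The main technical point to verify carefully is the justification of the Volterra integration step: one needs that $\Vert \widehat{U}_\mP-u\Vert^2$ has a generalized Caputo derivative in $L^1_{\mathrm{loc}}([0,T);\mR)$ so that \eqref{eq:Volterra-eq} is available, and that the distributional inequality coming from \eqref{eq:Caputo-ineq-norm-sq} passes cleanly to the fractional integral. Both are standard consequences of the Li--Liu framework recalled in \Cref{sub:Caputo}, since $\widehat{U}_\mP - u \in C([0,T];\mH)$ with Caputo derivative in $L^2$. The remaining steps are essentially algebraic manipulations once \eqref{eq:discrete-error-ineq1} and \eqref{eq:Caputo-ineq-norm-sq} are in hand.
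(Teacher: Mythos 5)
Your proof is correct and follows essentially the same route as the paper: the paper combines \eqref{eq:Caputo-ineq-norm-sq} with \eqref{eq:discrete-error-ineq1} and then invokes \Cref{lemma:frac-Gronwall-diff-ineq} with $\lambda=0$, $a=\Vert \widehat{U}_\mP-u\Vert$, $b=2(\sigma+\sigma)$, $c=2\mE_\mP$, $d=\Vert f-\overline{F}_\mP\Vert$. Your explicit Volterra integration plus the quadratic inequality $E^2\le c+bE$ is exactly what that lemma reduces to when $\lambda=0$ (so $E_\alpha(0)=1$), and your elementary bound $\sqrt{b^2+4c}\le b+2\sqrt{c}$ reproduces the stated constant.
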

\begin{proof}
From \eqref{eq:Caputo-ineq-norm-sq} we infer
\[
\begin{aligned}
\frac12 D_c^{\alpha} \Vert \widehat{U}_\mP-u \Vert^2(t)  &\le \l\langle D_c^{\alpha} \l( \widehat{U}_\mP-u\r)(t), \widehat{U}_\mP(t) - u(t) \r\rangle \\
&\le \mE_\mP(t) + \langle f(t) - \overline{F}_\mP(t),  u(t) - \widehat{U}_\mP(t) \rangle - \sigma(\overline{U}_\mP(t); u(t)) - \sigma(u(t); \widehat{U}_\mP(t)).
\end{aligned}
\]
The claimed a posteriori error estimate \eqref{eq:post-error} follows from \Cref{lemma:frac-Gronwall-diff-ineq} by setting
\begin{align*}
  \lambda &= 0,  &a(t) = \Vert (\widehat{U}_\mP-u)(t) \Vert, & &b(t)= 2 \l( \sigma(\overline{U}_\mP(t); u(t)) + \sigma(u(t); \widehat{U}_\mP(t)) \r), \\
  c(t)& = 2 \mE_\mP(t), & d(t)= \Vert (f - \overline{F}_\mP)(t) \Vert.
\end{align*}
\end{proof}

%
%

\subsection{Rate of convergence}

Although we have already established an optimal a priori rate of convergence for our scheme in \Cref{thm:aprioriexistence}, in this section we study the sharpness of the a posteriori error estimator $\mE_\mP$ by obtaining the same convergence rates through it. We comment that neither in \Cref{thm:aprioriexistence} nor in our discussion here, we require any relation between time steps. We will also consider some cases when the rate of convergence can be improved.

\subsubsection{Rate of convergence for energy solutions}
Let us now use the estimator $\mE_\mP$ to derive a convergence rate or order $\mO(\tau^{\alpha/2})$ for the error $E$, defined in \eqref{eq:def-Error}, when $f \in L^2_{\alpha}(0, T; \mH)$. Notice that such regularity a priori does not give any order of convergence for $\Vert f - \overline{F}_\mP \Vert_{L^1_{\alpha}(0,T;\mH)}$ in \eqref{eq:post-error}. Observe also that the rate that we obtain is consistent with classical gradient flow theories, where an order $\mO(\tau^{1/2})$ is proved provided that $u_0 \in D(\Phi)$ and $f \in L^2(0,T;\mH)$; see \cite[Sec 3.2]{MR1737503}.

We first bound $\Vert \mE_\mP \Vert_{L^1_{\alpha}(0,T;\mH)}$.

\begin{Theorem}[bound on $\Vert \mE_\mP \Vert_{L^1_{\alpha}(0,T;\mH)}$]\label{thm:bound-L1alpha-mE}
Under the assumption that  $U_0 \in D(\Phi)$, the estimator $\mE_\mP$, defined in \eqref{eq:post-error-mEt}, satisfies
\begin{equation}\label{eq:bound-L1alpha-mE}
  \Vert \mE_\mP \Vert_{L^1_{\alpha}(0,T;\mH)} \le C \tau^{\alpha} \l( \Vert f \Vert_{L^2_{\alpha}(0, T; \mH)}^2 + \Phi(U_0) - \Phi_{\inf} \r),
\end{equation}
where the constant $C$ depends only on $\alpha$.
\end{Theorem}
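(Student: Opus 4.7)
The overall strategy is to exploit the splitting $\mE_\mP = \mE_{\mP,1}+\mE_{\mP,2}$ and the fact that, by convexity of $\Phi$ together with the convex combination representation $\widehat U_\mP(t) = \sum_{i=0}^N \varphi_{\mP,i}(t)U_i$ provided by \Cref{prop:interp-convex-comb}, one has $\Phi(\widehat U_\mP(t))\le \widehat\Phi_\mP(t)$. Consequently $\mE_{\mP,2}(t)\le \widehat\Phi_\mP(t)-\widehat\Phi_\mP(\lceil t\rceil_\mP)$, which in turn gives the pointwise bound
\[
\mE_\mP(t)\le |\mE_{\mP,1}(t)| + |\widehat\Phi_\mP(t)-\widehat\Phi_\mP(\lceil t\rceil_\mP)|.
\]
It then suffices to bound the $L^1_\alpha$-norm of each of these two terms by $C\tau^\alpha(\|f\|_{L^2_\alpha}^2+\Phi(U_0)-\Phi_{\inf})$.

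For $\mE_{\mP,1}$, I would apply Cauchy--Schwarz pointwise in the inner product defining $\mE_{\mP,1}(t)$ and once more in the $(r-t)^{\alpha-1}$-weighted integral, getting
\[
\int_0^r(r-t)^{\alpha-1}|\mE_{\mP,1}(t)|\diff t\le \l\|\overline V_\mP-\overline F_\mP\r\|_{L^2_\alpha}\l(\int_0^r(r-t)^{\alpha-1}\|\widehat U_\mP(t)-\overline U_\mP(t)\|^2\diff t\r)^{1/2}.
\]
\Cref{lemma:pw-interp-L2alpha} and \Cref{lemma:bound-int-V} control the first factor by $C(\|f\|_{L^2_\alpha}^2+\Phi(U_0)-\Phi_{\inf})^{1/2}$. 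For the second factor I observe that $\overline U_\mP(t)=\widehat U_\mP(\lceil t\rceil_\mP)$ and apply \Cref{lemma:alpha-int-w} with $p=2$ and $w=\widehat U_\mP$ (whose Caputo derivative is the piecewise constant $\overline V_\mP$); this produces the required $\tau^\alpha$-factor, times $\|\overline V_\mP\|_{L^2_\alpha}$, which is again bounded by \Cref{lemma:bound-int-V}.

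For $|\widehat\Phi_\mP(t)-\widehat\Phi_\mP(\lceil t\rceil_\mP)|$, I would apply the scalar version of \Cref{lemma:alpha-int-w} with $p=1$ to $\widehat\Phi_\mP$, which by \Cref{remark:Phihat} has a piecewise constant Caputo derivative. This yields
\[
\int_0^r(r-t)^{\alpha-1}|\widehat\Phi_\mP(t)-\widehat\Phi_\mP(\lceil t\rceil_\mP)|\diff t\le C\tau^\alpha \|D_c^\alpha\widehat\Phi_\mP\|_{L^1_\alpha(0,T;\mR)},
\]
so the remaining task, and the main obstacle, is to bound $\|D_c^\alpha\widehat\Phi_\mP\|_{L^1_\alpha}$.

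The difficulty is that \Cref{remark:Phihat} supplies only the one-sided bound $D_c^\alpha\widehat\Phi_\mP(t)\le \tfrac14\|\overline F_\mP(t)\|^2$, which directly controls the positive part of $D_c^\alpha\widehat\Phi_\mP$ in $L^1_\alpha$ (yielding $\tfrac14\|\overline F_\mP\|_{L^2_\alpha}^2$), but says nothing about its negative part. To recover control of the negative part I would use the Volterra identity
\[
\widehat\Phi_\mP(t)-\Phi(U_0)=\frac1{\Gamma(\alpha)}\int_0^t(t-s)^{\alpha-1}D_c^\alpha\widehat\Phi_\mP(s)\diff s
\]
together with the lower bound $\widehat\Phi_\mP(t)\ge \Phi_{\inf}$; the latter holds because, again by \Cref{prop:interp-convex-comb}, $\widehat\Phi_\mP$ is a convex combination with nonnegative weights of the quantities $\Phi(U_i)\ge \Phi_{\inf}$. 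The reverse inequality this gives converts the $L^1_\alpha$ bound on the positive part into one on the negative part at the cost of an additive $\Gamma(\alpha)(\Phi(U_0)-\Phi_{\inf})$, producing
\[
\|D_c^\alpha\widehat\Phi_\mP\|_{L^1_\alpha}\le \tfrac12\|\overline F_\mP\|_{L^2_\alpha}^2+\Gamma(\alpha)(\Phi(U_0)-\Phi_{\inf}).
\]
A final appeal to \Cref{lemma:pw-interp-L2alpha} to replace $\overline F_\mP$ by $f$, followed by combining this with the bound on $\mE_{\mP,1}$, yields \eqref{eq:bound-L1alpha-mE}.
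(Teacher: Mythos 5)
Your proof is correct and follows essentially the same route as the paper's: the same splitting $\mE_\mP=\mE_{\mP,1}+\mE_{\mP,2}$, the identical treatment of $\mE_{\mP,1}$ by pointwise and weighted Cauchy--Schwarz together with \Cref{lemma:alpha-int-w} and \Cref{lemma:bound-int-V}, and for $\mE_{\mP,2}$ the same three ingredients --- the convexity bound $\Phi(\widehat U_\mP)\le\widehat\Phi_\mP$, the one-sided estimate $D_c^\alpha\widehat\Phi_\mP\le\tfrac14\Vert\overline F_\mP\Vert^2$ from \Cref{remark:Phihat}, and the lower bound $\widehat\Phi_\mP\ge\Phi_{\inf}$ fed through the Volterra identity. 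The only (correct, and arguably cleaner) difference is organizational: you invoke \Cref{lemma:alpha-int-w} with $p=1$ as a black box and then control $\Vert D_c^\alpha\widehat\Phi_\mP\Vert_{L^1_\alpha}$ by a positive/negative-part splitting, whereas the paper manipulates the kernel difference $(t-s)^{\alpha-1}-(\lceil t\rceil_\mP-s)^{\alpha-1}$ directly; both arrive at the same bound.
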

\begin{proof}
We bound the contributions $\mE_{\mP,1}$ and $\mE_{\mP,2}$ separately. The bound of $\mE_{\mP,1}$ follows without change that of the term $\mathrm{II}$ of \eqref{eq:a-priori-U12-proof3} in \Cref{lemma:a-priori-U12}. Thus,
\begin{equation}\label{eq:bound-L1alpha-mE1}
  \Vert \mE_{\mP,1} \Vert_{L^1_{\alpha}(0,T;\mH)} \le C \tau^{\alpha} \l( \Vert f \Vert_{L^2_{\alpha}(0, T; \mH)}^2 + \Phi(U_0) - \Phi_{\inf} \r).
\end{equation}

To bound $\mE_{\mP,2}$, we recall the function $\widehat{\Phi}_\mP$, defined in \Cref{remark:Phihat}, and its properties. Define also $\overline{\Phi}_\mP(t) = \Phi(\overline{U}_\mP(t))$. We have
\[
\begin{aligned}
\mE_{\mP,2}(t) &= \Phi\l( \widehat{U}_\mP(t) \r) - \Phi(\overline{U}_\mP(t)) \le \widehat{\Phi}_\mP(t) - \overline{\Phi}_\mP(t) \\
&= \frac{1}{\Gamma(\alpha)} \l( 
\int_0^t (t-s)^{\alpha-1} D_c^{\alpha} \widehat{\Phi}_\mP(s) \diff s - 
\int_0^{\tceil_\mP} (\tceil_\mP-s)^{\alpha-1} D_c^{\alpha} \widehat{\Phi}_\mP(s) \diff s \r) \\
&= \frac{1}{\Gamma(\alpha)} \l( 
\int_0^t [(t-s)^{\alpha-1}- (\tceil_\mP-s)^{\alpha-1}] D_c^{\alpha} \widehat{\Phi}_\mP(s) \diff s - 
\int_t^{\tceil_\mP} (\tceil_\mP-s)^{\alpha-1} D_c^{\alpha} \widehat{\Phi}_\mP(s) \diff s \r) \\
&\le \frac{1}{4\Gamma(\alpha)} \int_0^t [(t-s)^{\alpha-1}- (\tceil_\mP-s)^{\alpha-1}] \l\Vert \overline{F}_\mP(s) \r\Vert^2 \diff s - \frac{1}{\Gamma(\alpha)} \int_t^{\tceil_\mP} (\tceil_\mP-s)^{\alpha-1} D_c^{\alpha} \widehat{\Phi}_\mP(s) \diff s \\
&= \frac{1}{4\Gamma(\alpha)} \int_0^t [(t-s)^{\alpha-1}- (\tceil_\mP-s)^{\alpha-1}] \l\Vert \overline{F}_\mP(s) \r\Vert^2 \diff s - \frac{1}{\Gamma(\alpha+1)} (\tceil_\mP - t)^{\alpha} D_c^{\alpha} \widehat{\Phi}_\mP(t) \\ &= \mathrm{I}_1(t) - \mathrm{I}_2(t).
\end{aligned}
\]
On the one hand, proceeding as in the proof of \Cref{lemma:alpha-int-w} we obtain
\[
\sup_{r \in [0,T]} \int_0^r (r-t)^{\alpha-1} \mathrm{I}_1(t) \diff t
\le C_3 \tau^{\alpha} \Vert \overline{F}_\mP \Vert^2_{L^2_{\alpha}(0, T; \mH)}.
\]
On the other hand, using
\[
-\mathrm{I}_2(t) \le \frac{-1}{\Gamma(\alpha+1)} (\tceil_\mP - t)^{\alpha}  \l( D_c^{\alpha} \widehat{\Phi}_\mP(t) - \frac{1}{4}\l\Vert \overline{F}_\mP(t) \r\Vert^2 \r)
\le \frac{\tau^{\alpha}}{\Gamma(\alpha+1)}  
\l( \frac{1}{4}\l\Vert \overline{F}_\mP(t) \r\Vert^2 - D_c^{\alpha} \widehat{\Phi}_\mP(t) \r)
\]
we have for any $r \in [0,T]$ that
\[
\begin{aligned}
& -\int_0^r (r-t)^{\alpha-1} \mathrm{I}_2(t) \diff t
\le \frac{\tau^{\alpha}}{\Gamma(\alpha+1)} \int_0^r (r-t)^{\alpha-1} \l( \frac{1}{4}\l\Vert \overline{F}(t) \r\Vert^2 - D_c^{\alpha} \widehat{\Phi}_\mP(t) \r) \diff t \\
&= \frac{\tau^{\alpha}}{4\Gamma(\alpha+1)} \int_0^r (r-t)^{\alpha-1} \l\Vert \overline{F}_\mP(t) \r\Vert^2 \diff t - \frac{\tau^{\alpha}}{\alpha} 
\l( \widehat{\Phi}_\mP(r) - \Phi(U_0) \r) \\
&\le \frac{\tau^{\alpha}}{4\Gamma(\alpha+1)} \Vert \overline{F}_\mP \Vert^2_{L^2_{\alpha}(0, T; \mH)} + \frac{\tau^{\alpha}}{\alpha}
\l( \Phi(U_0) - \Phi_{\inf}  \r).
\end{aligned}
\]
Therefore combining the estimates for $\mathrm{I}_1$ and $\mathrm{I}_2$ we have proved that
\[
\sup_{r \in [0,T]} \int_0^r (r-t)^{\alpha-1} \mE_{\mP,2}(t) \diff t
\le C_4 \tau^{\alpha} \l( \Vert f \Vert_{L^2_{\alpha}(0, T; \mH)}^2 + \Phi(U_0) - \Phi_{\inf} \r),
\]
which together with \eqref{eq:bound-L1alpha-mE1} proves \eqref{eq:bound-L1alpha-mE} because $\mE_\mP$ is nonnegative. 
\end{proof}

We next take advantage of \Cref{lemma:alpha-int-f-Fbar}, and derive a rate for $E$ without additional smoothness assumptions on the right hand side $f$.

\begin{Theorem}[a priori II]\label{thm:post-rate-fL2alpha}
Let $u$ be the energy solution of \eqref{eq:theGradFlow}. Let $\mP$ be a partition of $[0,T]$ defined as in \eqref{eq:def-partition} and $\bU \in \mH^N$ be the discrete solution given by \eqref{eq:discrete-frac-GF} starting from $U_0 \in D(\Phi)$. Let $E$ be defined in \eqref{eq:def-Error}. Then we have
\[
E \le \Vert u_0 - U_0 \Vert + C \tau^{\alpha/2} \l( \Vert f \Vert_{L^2_{\alpha}(0, T; \mH)}^2 + \Phi(U_0) - \Phi_{\inf} \r)^{1/2},
\]
where the constant $C$ depends only on $\alpha$.
\end{Theorem}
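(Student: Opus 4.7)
The natural first move would be to combine \Cref{thm:post-error} with \Cref{thm:bound-L1alpha-mE}, but this does not work directly: the a posteriori bound also contains $\Vert f-\overline{F}_\mP\Vert_{L^1_\alpha(0,T;\mH)}$, and this scalar norm has no nontrivial decay in $\tau$ for a generic $f\in L^2_\alpha(0,T;\mH)$. My plan is instead to redo the energy estimate that underlies \Cref{thm:post-error}, borrowing the twist used in the proof of \Cref{lemma:a-priori-U12}. I would introduce the auxiliary function
$$G(t)=\frac{1}{\Gamma(\alpha)}\int_0^t(t-s)^{\alpha-1}\bigl(f(s)-\overline{F}_\mP(s)\bigr)\diff s,$$
so that $D_c^\alpha G=f-\overline{F}_\mP$ and $G(0)=0$, and work with the shifted error $w=\widehat{U}_\mP-u+G$ in place of $\widehat{U}_\mP-u$.

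Using $\langle f-\overline{F}_\mP,u-\widehat{U}_\mP\rangle=-\langle D_c^\alpha G,\widehat{U}_\mP-u\rangle$ to rewrite the right-hand side of \eqref{eq:discrete-error-ineq1} and noting $\widehat{U}_\mP-u=w-G$, the inequality rearranges as
$$\langle D_c^\alpha w,w\rangle+\sigma(\overline{U}_\mP(t);u(t))+\sigma(u(t);\widehat{U}_\mP(t))\le \mE_\mP(t)+\langle D_c^\alpha w,G\rangle.$$
Applying \eqref{eq:Caputo-ineq-norm-sq} to the first term and then the Volterra representation \eqref{eq:Volterra-eq}, using $w(0)=U_0-u_0$, I obtain, for every $r\in[0,T]$,
$$\frac12\Vert w(r)\Vert^2+\frac{1}{\Gamma(\alpha)}\int_0^r(r-t)^{\alpha-1}\bigl[\sigma(\overline{U}_\mP;u)+\sigma(u;\widehat{U}_\mP)\bigr]\diff t\le \frac12\Vert U_0-u_0\Vert^2+\mathrm{I}(r)+\mathrm{II}(r),$$
where $\mathrm{I}(r)=\Gamma(\alpha)^{-1}\int_0^r(r-t)^{\alpha-1}\mE_\mP(t)\diff t$ and $\mathrm{II}(r)=\Gamma(\alpha)^{-1}\int_0^r(r-t)^{\alpha-1}\langle D_c^\alpha w(t),G(t)\rangle\diff t$.

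Bounding $\mathrm{I}(r)$ is immediate from \Cref{thm:bound-L1alpha-mE}: $\mathrm{I}(r)\le C\tau^\alpha(\Vert f\Vert_{L^2_\alpha}^2+\Phi(U_0)-\Phi_{\inf})$. For $\mathrm{II}(r)$, Cauchy--Schwarz inside the weighted integral yields $\mathrm{II}(r)\le \Gamma(\alpha)^{-1}\Vert D_c^\alpha w\Vert_{L^2_\alpha(0,r;\mH)}\Vert G\Vert_{L^2_\alpha(0,r;\mH)}$. \Cref{lemma:bound-int-V}, \Cref{lemma:pw-interp-L2alpha}, and the estimate on $\Vert D_c^\alpha u\Vert_{L^2_\alpha}$ extracted from the proof of \Cref{thm:exist-uniq-frac-GF} together control $\Vert D_c^\alpha w\Vert_{L^2_\alpha}$ by a constant times $(\Vert f\Vert_{L^2_\alpha}^2+\Phi(U_0)-\Phi_{\inf})^{1/2}$, while \eqref{eq:double-alpha-int-f-Fbar} with $\beta=\alpha$ gives $\Vert G\Vert_{L^2_\alpha}\le C\tau^\alpha\Vert f\Vert_{L^2_\alpha}$. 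Hence $\mathrm{II}(r)$ also satisfies the $\tau^\alpha$ bound.

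Finally, to pass from the bound on $\Vert w\Vert$ back to $\Vert\widehat{U}_\mP-u\Vert=\Vert w-G\Vert$ while keeping the coefficient $1$ in front of $\Vert u_0-U_0\Vert$, I would combine the pointwise estimate $\Vert G\Vert_{L^\infty(0,T;\mH)}\le C\tau^{\alpha/2}\Vert f\Vert_{L^2_\alpha}$ from \eqref{eq:alpha-int-f-Fbar} with $p=2$, together with the elementary inequality $\sqrt{(a+b)^2+c^2}\le\sqrt{a^2+c^2}+b$ valid for $a,b,c\ge 0$; taking the supremum in $r$ of the resulting bound produces $E\le\Vert u_0-U_0\Vert+C\tau^{\alpha/2}(\Vert f\Vert_{L^2_\alpha}^2+\Phi(U_0)-\Phi_{\inf})^{1/2}$. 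The principal obstacle throughout is the very first step: substituting $\langle f-\overline{F}_\mP,u-\widehat{U}_\mP\rangle$ by $\langle D_c^\alpha w,G\rangle$ is what allows one to trade the nondecaying $\Vert f-\overline{F}_\mP\Vert_{L^1_\alpha}$ for the two distinct bounds on $G$ supplied by \Cref{lemma:alpha-int-f-Fbar} (an $L^\infty$ bound of rate $\tau^{\alpha/2}$ and an $L^2_\alpha$ bound of rate $\tau^\alpha$), which together deliver exactly the $\tau^{\alpha/2}$ rate.
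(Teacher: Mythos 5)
Your proposal is correct and follows essentially the same route as the paper: the paper's proof also introduces $G(t)=\Gamma(\alpha)^{-1}\int_0^t(t-s)^{\alpha-1}(f(s)-\overline{F}_\mP(s))\,\diff s$, works with the shifted error $u-\widehat{U}_\mP-G$ (your $w$ up to sign), bounds the resulting fractional integrals via \Cref{thm:bound-L1alpha-mE} and the Cauchy--Schwarz argument from \Cref{lemma:a-priori-U12}, and recovers the $\tau^{\alpha/2}$ rate from the $L^\infty$ and $L^2_\alpha$ bounds on $G$ in \Cref{lemma:alpha-int-f-Fbar}. Your closing inequality $\sqrt{(a+b)^2+c^2}\le\sqrt{a^2+c^2}+b$ just makes explicit the final triangle-inequality step the paper leaves implicit.
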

\begin{proof}
We follow closely the approach and notation in \Cref{lemma:a-priori-U12}. Define
\[
G(t) = \frac{1}{\Gamma(\alpha)} \int_0^t (t-s)^{\alpha-1} \l( f(s) - \overline{F}_\mP(s) \r) \diff s
\]
and note that,  by \Cref{lemma:alpha-int-f-Fbar}, $G$ satisfies
\begin{equation}\label{post-rate-fL2alpha-proof1}
\tau^{\alpha/2} \Vert G \Vert_{L^\infty(0,T;\mH)}  + 
\Vert G \Vert_{L^2_{\alpha}(0, T; \mH)} \le C_1 \tau^{\alpha} \Vert f \Vert_{L^2_{\alpha}(0, T; \mH)},
\end{equation}
where the constant depends only on $\alpha$. Set $e = u - \widehat{U}_\mP$ and note that \eqref{eq:discrete-error-ineq1} can be rewritten as
\[
\l\langle D_c^{\alpha} \l( e - G\r)(t), \l( e - G\r)(t) \r\rangle + \sigma(\overline{U}_\mP(t); u(t)) + \sigma(u(t); \widehat{U}_\mP(t)) \le \mE_\mP(t) - \l\langle D_c^{\alpha} \l( e - G\r)(t), G(t) \r\rangle.
\]
Notice the resemblance with \eqref{eq:a-priori-U12-proof5}. We can thus proceed as in \Cref{lemma:a-priori-U12}, and use 
\Cref{thm:bound-L1alpha-mE}, to deduce that, for some constant $C$, depending only on $\alpha$
\[
\Vert u-\widehat{U}_\mP - G \Vert^2(t) + E_{\sigma}(t)
\le \Vert u_0 - U_0 \Vert^2 + C_3 \tau^{\alpha} \l( \Vert f \Vert_{L^2_{\alpha}(0, T; \mH)}^2 + \Phi(U_0) - \Phi_{\inf} \r).
\]
Estimate \eqref{post-rate-fL2alpha-proof1} then implies the result.
\end{proof}

\subsubsection{Rate of convergence for smooth energies}\label{sub:SmoothEnergy}

Let us show that, at least for smoother energies, it is possible to obtain a better rate of convergence. We will, essentially, assume that the energy is locally $C^{1+\beta}$ for $\beta \in (0,1]$. More specifically in this section we consider energies that satisfy the following. There exists $\beta \in (0,1]$ such that for every $R > 0$, there is a constant $C_{\beta, R} > 0$ for which
\begin{equation}\label{eq:ineq-smooth-Phi-beta-R}
  \Phi(w_2) - \Phi(w_1) - \langle \xi_1, w_2 - w_1 \rangle \le 
  C_{\beta,R} \Vert w_2 - w_1 \Vert^{1+\beta}, \quad \forall w_1, w_2 \in B_R, \;  \xi_1 \in \partial \Phi(w_1),
\end{equation}
where $B_R$ denotes the ball of radius $R$ in $\mH$. Notice that, by \Cref{lemma:Hold_Uhat}, all the discrete solutions $\widehat{U}_\mP$ are uniformly bounded in $C([0,T];\mH)$. Thus, we can fix $\bar{R} > 0$ depending only on the data such that, for any partition $\mP$ and all $t \in [0,T]$, $\widehat{U}_\mP(t) \in B_{\bar{R}}$. Therefore, \eqref{eq:ineq-smooth-Phi-beta-R} implies that
\begin{equation}\label{eq:ineq-smooth-Phi-beta}
\Phi(w_2) - \Phi(w_1) - \langle \xi_1, w_2 - w_1 \rangle \le 
C_{\beta} \Vert w_2 - w_1 \Vert^{1+\beta}, \quad \forall w_1, w_2 \in \widehat{U}_\mP([0,T]), \;  \xi_1 \in \partial \Phi(w_1),
\end{equation}	
for some constant $C_{\beta}=C_{\beta,\bar{R}}$.

A particular example to which this situation applies is the following. Let $\mH = \mR^d$ and $\Phi(w) = \tfrac1p |w|^p$ with $p > 1$. In this case, \eqref{eq:ineq-smooth-Phi-beta} holds with $\beta = 1$ for $p \ge 2$ and $\beta= p-1$ for $p \in (1,2)$. For $p<2$, to reach $\beta = 1$, we must assume that $u$ and $\widehat{U}_\mP$ stay uniformly away from zero. This example can, of course, be generalized.

In this setting, we have the following improved estimate for $\Vert \mE_\mP \Vert_{L^1_{\alpha}(0,T;\mH)}$. 

\begin{Theorem}[improved bound]\label{thm:bound-L1alpha-mE-2}
Assume that the energy $\Phi$ satisfies \eqref{eq:ineq-smooth-Phi-beta}. Let $u$ be the energy solution to \eqref{eq:theGradFlow}, and denote by $\mP$ a partition of $[0,T]$ defined as in \eqref{eq:def-partition}. Denote by $\widehat{U}_\mP$ the solution of \eqref{eq:discrete-frac-GF} starting from $U_0 \in D(\Phi)$. In this setting, the estimator $\mE_\mP$ defined in \eqref{eq:post-error-mEt} satisfies
\begin{equation}\label{eq:bound-L1alpha-mE-2}
\Vert \mE_\mP \Vert_{L^1_{\alpha}(0,T;\mH)} \le C T^{\alpha(1-\beta)/2} \tau^{\alpha(\beta+1)} \l( \Vert f \Vert_{L^2_{\alpha}(0, T; \mH)}^2 + \Phi(U_0) - \Phi_{\inf} \r)^{(\beta+1)/2},
\end{equation}
for some constant $C$ that depends on $\alpha$, $\beta$, and the problem data. 
\end{Theorem}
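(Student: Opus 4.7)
The plan is to upgrade only the bound on $\mE_{\mP,2}$, since the argument of \Cref{thm:bound-L1alpha-mE} already controls $\mE_{\mP,1}$ in terms of first-order information; the extra regularity \eqref{eq:ineq-smooth-Phi-beta} of $\Phi$ is precisely what is needed to gain a second factor of $\tau^{\alpha\beta}$ on the energy-increment part. First I would observe that both $\widehat{U}_\mP(t)$ and $\overline{U}_\mP(t) = \widehat{U}_\mP(\tceil_\mP)$ lie in the fixed ball $B_{\bar R}$ identified after \Cref{lemma:Hold_Uhat}, and that $\overline{F}_\mP(t) - D_c^\alpha \widehat{U}_\mP(t) \in \partial\Phi(\overline{U}_\mP(t))$ by the construction \eqref{eq:discrete-frac-GF}. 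Choosing $w_1 = \overline{U}_\mP(t)$, $w_2 = \widehat{U}_\mP(t)$, and $\xi_1 = \overline{F}_\mP(t) - D_c^\alpha \widehat{U}_\mP(t)$ in \eqref{eq:ineq-smooth-Phi-beta} then yields
\[
\mE_{\mP,2}(t) = \Phi(\widehat{U}_\mP(t)) - \Phi(\overline{U}_\mP(t)) \leq -\mE_{\mP,1}(t) + C_\beta \|\widehat{U}_\mP(t) - \overline{U}_\mP(t)\|^{1+\beta},
\]
so the cross term cancels and one obtains the pointwise bound
\[
0 \leq \mE_\mP(t) \leq C_\beta \|\widehat{U}_\mP(t) - \overline{U}_\mP(t)\|^{1+\beta}.
\]

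Next I would integrate against the Riesz kernel. Since $D_c^\alpha \widehat{U}_\mP = \overline{V}_\mP$ is piecewise constant on $\mP$, \Cref{lemma:alpha-int-w} applied to $w = \widehat{U}_\mP$ with exponent $p = 1+\beta$ gives
\[
\sup_{r \in [0,T]} \int_0^r (r-t)^{\alpha-1} \|\widehat{U}_\mP(\tceil_\mP) - \widehat{U}_\mP(t)\|^{1+\beta} \diff t \leq C \tau^{(1+\beta)\alpha} \|D_c^\alpha \widehat{U}_\mP\|_{L^{1+\beta}_\alpha(0,T;\mH)}^{1+\beta}.
\]
Finally I would interpolate $L^{1+\beta}_\alpha$ between itself and $L^2_\alpha$. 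Writing the integrand as
\[
(r-s)^{\alpha-1}\|v(s)\|^{1+\beta} = \bigl[(r-s)^{\alpha-1}\bigr]^{(1-\beta)/2} \cdot \bigl[(r-s)^{\alpha-1}\|v(s)\|^2\bigr]^{(1+\beta)/2}
\]
and applying \Hold inequality with conjugate exponents $2/(1-\beta)$ and $2/(1+\beta)$ produces
\[
\|v\|_{L^{1+\beta}_\alpha(0,T;\mH)}^{1+\beta} \leq (T^\alpha/\alpha)^{(1-\beta)/2} \|v\|_{L^2_\alpha(0,T;\mH)}^{1+\beta}.
\]
Specializing to $v = D_c^\alpha \widehat{U}_\mP$ and then invoking the a priori estimate of \Cref{lemma:bound-int-V} to bound $\|D_c^\alpha \widehat{U}_\mP\|_{L^2_\alpha}^{1+\beta}$ by $(\|f\|_{L^2_\alpha}^2 + \Phi(U_0) - \Phi_{\inf})^{(1+\beta)/2}$ assembles the three factors into \eqref{eq:bound-L1alpha-mE-2}.

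The only step that is not a direct quotation of a previous result is the \Hold interpolation between $L^{1+\beta}_\alpha$ and $L^2_\alpha$, and even there the main point is just choosing the right splitting of the kernel. The limiting case $\beta = 1$ bypasses the interpolation entirely: one simply combines the pointwise bound with \Cref{lemma:alpha-int-w} at $p=2$ and \Cref{lemma:bound-int-V}. The factor $T^{\alpha(1-\beta)/2}$ in the final estimate is exactly the cost of this interpolation and disappears in the smooth limit, which is the expected behavior.
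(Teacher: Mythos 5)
Your proof is correct and follows essentially the same route as the paper: the pointwise bound $\mE_\mP(t) \le C_\beta \Vert \widehat{U}_\mP(t) - \overline{U}_\mP(t)\Vert^{1+\beta}$ from \eqref{eq:ineq-smooth-Phi-beta} (with the cross term cancelling $\mE_{\mP,1}$), then \Cref{lemma:alpha-int-w} with $p=1+\beta$, then the H\"older embedding of $L^2_\alpha$ into $L^{1+\beta}_\alpha$ and \Cref{lemma:bound-int-V}. The explicit kernel splitting you give for the interpolation step is exactly the embedding the paper quotes, so there is nothing to add.
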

\begin{proof}
Owing to \eqref{eq:ineq-smooth-Phi-beta}, the estimator $\mE_\mP$ can be bounded from above by
\[
\mE_\mP(t) = \langle D_c^{\alpha} \widehat{U}_\mP(t) - \overline{F}_\mP(t), \widehat{U}_\mP(t) - \overline{U}_\mP(t) \rangle + \Phi(\widehat{U}_\mP(t)) - \Phi(\overline{U}_\mP(t))
\le C_{\beta} \Vert \widehat{U}_\mP(t) - \overline{U}_\mP(t) \Vert^{1+\beta}.
\]
Applying \Cref{lemma:alpha-int-w} with $p=1+\beta$ we have
\[
\Vert \mE_\mP \Vert_{L^1_{\alpha}(0,T;\mH)} \le \sup_{r \in [0,T]} C_{\beta} \int_0^r (r-t)^{\alpha-1}  \Vert \widehat{U}_\mP(t) - \overline{U}_\mP(t) \Vert^{1+\beta} \diff t
\le C \tau^{\alpha(1+\beta)} \l\Vert D_c^{\alpha} \widehat{U} \r\Vert^{1+\beta}_{L^{1+\beta}_{\alpha}(0,T;\mH)},
\]
for some constant $C$ that depends on $\alpha, \beta$ and the problem data. Since $1+\beta \in (1,2]$, \Cref{lemma:bound-int-V} and the embedding 
\[
\Vert w \Vert_{L^{1+\beta}_{\alpha}(0,T;\mH)}
\le \Vert w \Vert_{L^{2}_{\alpha}(0,T;\mH)} 
\l( \frac{T^{\alpha}}{\alpha} \r)^{(1-\beta)/(2(1+\beta))},
\]
imply that
\[
\l\Vert D_c^{\alpha} \widehat{U}_\mP \r\Vert^{1+\beta}_{L^{1+\beta}_{\alpha}(0,T;\mH)}
\le C_2 T^{\alpha(1-\beta)/2} \l( \Vert f \Vert_{L^2_{\alpha}(0, T; \mH)}^2 + \Phi(U_0) - \Phi_{\inf} \r)^{(1+\beta)/2},
\]
and this implies the claim.
\end{proof}

Now, in order to obtain a convergence rate using \eqref{eq:post-error}, we still need to control $\Vert f - \overline{F}_\mP \Vert_{L^1_{\alpha}(0, T; \mH)}$. To do so, we invoke inequality \eqref{eq:Lq-embed-Lpalpha} and see that
\[
\Vert f - \overline{F}_\mP \Vert_{L^1_{\alpha}(0, T; \mH)}
\le \l( \frac{q-1}{q\alpha - 1} \r)^{(q-1)/q} T^{\alpha - 1/q} \Vert f - \overline{F}_\mP \Vert_{L^q(0, T; \mH)}
\]
for $q > 1/\alpha$. Thus, if $f \in W^{\alpha(1+\beta)/2,q}(0, T; \mH)$, then we have
\[
\Vert f - \overline{F}_\mP \Vert_{L^q(0, T; \mH)} \leq C
\tau^{\alpha(1+\beta)/2} |f|_{W^{\alpha(1+\beta)/2,q}(0, T; \mH)} 
\]
and hence
\begin{equation}\label{eq:L1alpha-f-Fbar}
\Vert f - \overline{F}_\mP \Vert_{L^1_{\alpha}(0, T; \mH)} \le C T^{\alpha - 1/q}
\tau^{\alpha(1+\beta)/2} |f|_{W^{\alpha(1+\beta)/2,q}(0, T; \mH)}
\end{equation}
for some constant $C$ that depends on $\alpha$ and $q$. Combining this with \Cref{thm:bound-L1alpha-mE-2}, the following convergence rate is a direct consequence of \Cref{thm:post-error}.

\begin{Theorem}[improved rate: smooth energies]\label{thm:post-rate-W2inf}
Assume that the energy $\Phi$ satisfies \eqref{eq:ineq-smooth-Phi-beta}. Let $u$ be the energy solution to \eqref{eq:theGradFlow}, and denote by $\mP$ a partition of $[0,T]$ defined as in \eqref{eq:def-partition}. Denote by $\widehat{U}_\mP$ the solution of \eqref{eq:discrete-frac-GF} starting from $U_0 \in D(\Phi)$. In this setting, if there is $q>1/\alpha$ for which $f \in W^{\alpha(\beta+1)/2,q}(0, T; \mH)$ then the error $E$, defined in \eqref{eq:def-Error}, satisfies
\[
E \le \Vert u_0 - U_0 \Vert + C \tau^{\alpha(\beta+1)/2} \l[ \l( \Vert f \Vert_{L^2_{\alpha}(0, T; \mH)}^2 + \Phi(U_0) - \Phi_{\inf} \r)^{(\beta+1)/4} + |f|_{W^{\alpha(\beta+1)/2,q}(0, T; \mH)} \r],
\]
where the constant $C$ depends on $\alpha$, $\beta$, $q$, $T$, and the problem data.
\end{Theorem}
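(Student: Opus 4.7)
The plan is to directly combine the a posteriori bound of Theorem~\ref{thm:post-error} with the two refined estimates available under the smoothness hypothesis on $\Phi$ and the fractional Sobolev regularity of $f$. Specifically, I would start from
\[
E \le \l( \Vert u_0 - U_0\Vert^2 + \tfrac{2}{\Gamma(\alpha)} \Vert \mE_\mP \Vert_{L^1_{\alpha}(0,T;\mH) } \r)^{1/2} + \tfrac{2}{\Gamma(\alpha)} \Vert f - \overline{F}_\mP \Vert_{L^1_{\alpha}(0,T;\mH)}
\]
and analyze each summand in turn.

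For the first summand, I would invoke Theorem~\ref{thm:bound-L1alpha-mE-2}, which, thanks to \eqref{eq:ineq-smooth-Phi-beta}, gives
\[
\Vert \mE_\mP \Vert_{L^1_{\alpha}(0,T;\mH)} \le C T^{\alpha(1-\beta)/2} \tau^{\alpha(\beta+1)} \l( \Vert f \Vert_{L^2_{\alpha}(0, T; \mH)}^2 + \Phi(U_0) - \Phi_{\inf} \r)^{(\beta+1)/2}.
\]
Using the elementary inequality $\sqrt{a+b} \le \sqrt{a}+\sqrt{b}$ to split off $\Vert u_0 - U_0\Vert$, the corresponding contribution becomes $\tau^{\alpha(\beta+1)/2}$ times $(\Vert f \Vert_{L^2_\alpha}^2 + \Phi(U_0) - \Phi_{\inf})^{(\beta+1)/4}$, with a multiplicative constant absorbing $T^{\alpha(1-\beta)/4}$ and $\Gamma(\alpha)^{-1/2}$.

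For the second summand, I would use the bound \eqref{eq:L1alpha-f-Fbar} derived just before the theorem statement. Since $f \in W^{\alpha(\beta+1)/2,q}(0,T;\mH)$ with $q>1/\alpha$, a combination of the embedding inequality \eqref{eq:Lq-embed-Lpalpha} and a standard $L^q$ approximation-by-averages estimate yields
\[
\Vert f - \overline{F}_\mP \Vert_{L^1_\alpha(0,T;\mH)} \le C T^{\alpha-1/q} \tau^{\alpha(\beta+1)/2} |f|_{W^{\alpha(\beta+1)/2,q}(0,T;\mH)},
\]
which produces precisely a $\tau^{\alpha(\beta+1)/2}$ rate matching the first summand.

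Adding the two contributions and absorbing all factors depending on $\alpha$, $\beta$, $q$, $T$, and problem data into the constant $C$ yields the claimed estimate. There is no real obstacle here: the heavy lifting was already done in the approximation estimate of Lemma~\ref{lemma:alpha-int-w} (used inside Theorem~\ref{thm:bound-L1alpha-mE-2}) and in Lemma~\ref{lemma:alpha-int-f-Fbar} and Proposition~\ref{proposition:embeddingLpalpha} (used to get the $L^1_\alpha$ bound for $f-\overline{F}_\mP$). The only point requiring care is checking that the two rates for the two summands in the a posteriori estimate are consistent: the $\tau^{\alpha(\beta+1)}$ bound on $\Vert \mE_\mP \Vert_{L^1_\alpha}$ becomes $\tau^{\alpha(\beta+1)/2}$ after the square root, exactly matching the direct $\tau^{\alpha(\beta+1)/2}$ rate for $\Vert f-\overline{F}_\mP\Vert_{L^1_\alpha}$, so no rate is wasted.
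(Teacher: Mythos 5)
Your proposal is correct and follows exactly the route the paper takes: the a posteriori bound of Theorem~\ref{thm:post-error}, combined with Theorem~\ref{thm:bound-L1alpha-mE-2} for the estimator term and the bound \eqref{eq:L1alpha-f-Fbar} (via \eqref{eq:Lq-embed-Lpalpha} and the $L^q$ approximation property of averages) for the data term, then $\sqrt{a+b}\le\sqrt{a}+\sqrt{b}$. The paper in fact records this argument only as ``a direct consequence'' of those results, so your write-up supplies precisely the omitted bookkeeping.
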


\subsubsection{Rate of convergence for linear problems}\label{sub:LinearProblems}
Let us now show how for certain classes of linear problems an improved rate of convergence can be obtained. We first assume that we have a Gelfand triple,
\[
  \mV \hookrightarrow \mH \hookrightarrow \mV'
\]
and that 
\begin{equation}
\label{eq:QuadEnergy}
  \Phi(w) = \begin{dcases}
              \frac12 \fa(w,w), & w \in \mV, \\ +\infty, &w \notin \mV.
            \end{dcases}
\end{equation}
where $a: \mV \times \mV \to \mR$ is a nonnegative, symmetric, bounded, and semicoercive bilinear form. In this setting, \eqref{eq:sol-ineq-@t} becomes
\[
  \langle D_c^\alpha u, w \rangle + \fa(u,w) = \langle f, w\rangle, \quad \forall w \in \mV.
\]
Notice that the bilinear form induces an operator $\fA: \mV \to \mV'$ given by 
\[
  \langle \fA v, w \rangle_{\mV,\mV'} = \fa(v,w), \quad \forall v,w \in \mV,
\]
which implies that, for almost every $t \in (0,T)$, we have a problem in $\mV'$  which reads
\[
D_c^\alpha u(t) + \fA u(t) = f(t).
\]
So that, $u_0 \in D(\partial \Phi)$ is equivalent to $\fA u_0 \in \mH$. The bilinear form $\fa$ also induces a semi-norm on $\mV$
\[
[w]_{\mV} =  \fa(w,w) ^{1/2}.
\]
We further assume that $f \in L^2_{\alpha}(0,T; [ \cdot ]_{\mV})$. More essentially we also require $u_0 \in D(\partial\Phi)$.

The motivation for an improved rate of convergence is then the following, at this stage formal, calculation. From \eqref{eq:Caputo-ineq-norm-sq} we have
\[
\begin{aligned}
  \frac12 D_c^\alpha \| \fA u(t) \|^2 &\leq \langle D_c^\alpha \fA u(t), \fA u(t) \rangle =
  \langle \fA u(t), \fA D_c^\alpha u(t) \rangle =
  \langle  f(t) - D_c^\alpha u(t), \fA D_c^\alpha u(t) \rangle \\ &=
  \fa( f(t), D_c^\alpha u(t) ) - [D_c^\alpha u(t)]^2_{\mV}
  \leq [f(t)]_{\mV} [D_c^\alpha u(t)]_{\mV} - [D_c^\alpha u(t)]^2_{\mV}.
\end{aligned}
\]
Which then shows via \eqref{eq:Volterra-eq} that
\[
\begin{aligned}
 & \frac{\Gamma(\alpha)}{2} \| \fA u(t) \|^2 + \int_0^t (t-s)^{\alpha-1} [D_c^\alpha u(s)]^2_{\mV} \; \diff s \\
 & \leq \frac{\Gamma(\alpha)}{2} \| \fA u_0 \|^2 + \l( \int_0^t (t-s)^{\alpha-1} [f(s)]^2_{\mV} \; \diff s \r)^{1/2} \l( \int_0^t (t-s)^{\alpha-1} [D_c^\alpha u(s)]^2_{\mV} \; \diff s \r)^{1/2}.
\end{aligned}
\]
This implies that
\[
[ D_c^\alpha u ]^2_{L^2_{\alpha}(0,T; [ \cdot ]_{\mV})} = \sup_{t \in [0,T]} \int_0^t (t-s)^{\alpha-1} \l[ D_c^\alpha u(s) \r]_{\mV}^2 \diff s
\leq \Gamma(\alpha) \| \fA u_0 \|^2 + \Vert f \Vert^2_{L^2_{\alpha}(0,T; [ \cdot ]_{\mV})},
\]
which says that $D_c^\alpha u$ is uniformly bounded in $L^2_{\alpha}(0,T; [ \cdot ]_{\mV})$.

To make these considerations rigorous, we consider the discrete problem \eqref{eq:discrete-frac-GF}, which in this case reduces to
\[
  (D_\mP^\alpha \bU)_n + \fA U_n = F_n,
\]
Then the computations can be followed verbatim to obtain that
\[
\begin{aligned}
  & \frac{\Gamma(\alpha)}{2} \| \fA \widehat{U}_\mP(t) \|^2 + \int_0^t (t-s)^{\alpha-1} \l[ D_c^\alpha \widehat{U}_\mP(s) \r]^2 \diff s \\
  & \leq \frac{\Gamma(\alpha)}{2} \| \fA U_0 \|^2 + \l( \int_0^t (t-s)^{\alpha-1} \l[ D_c^\alpha \widehat{U}_\mP(s) \r]^2 \diff s \r)^{1/2} \l( \int_0^t (t-s)^{\alpha-1} \l[ \overline{F}_\mP(s) \r]^2 \diff s \r)^{1/2}
\end{aligned}
\]
and
\begin{equation}\label{eq:L2-alpha-regularity-linear}
\l[ D_c^\alpha \widehat{U}_\mP \r]_{L^2_{\alpha}(0,T; [ \cdot ]_{\mV})}^2 = \sup_{t \in [0,T]} \int_0^t (t-s)^{\alpha-1} \l[ D_c^\alpha \widehat{U}_\mP (s) \r]_{\mV}^2 \diff s
\leq \Gamma(\alpha) \| \fA U_0 \|^2 + \Vert \overline{F}_\mP \Vert^2_{L^2_{\alpha}(0,T; [ \cdot ]_{\mV})}.
\end{equation}
Similar to \Cref{lemma:pw-interp-L2alpha}, we know that
\[
\Vert \overline{F}_\mP \Vert_{L^2_{\alpha}(0,T; [ \cdot ]_{\mV})}
\leq  C \Vert f \Vert_{L^2_{\alpha}(0,T; [ \cdot ]_{\mV})}
\]
and hence $D_c^\alpha \widehat{U}_\mP$ is uniformly bounded $L^2_{\alpha}(0,T; [ \cdot ]_{\mV})$.

With this additional regularity, we can obtain an improved rate of convergence. To see this, we will use that $\Phi$ is, essentially, quadratic to observe that in this case the error estimator, defined in \eqref{eq:post-error-mEt} reduces to
\begin{equation}
\label{eq:estforGelfand}
  \mE_\mP = \frac12 \fa( \widehat{U}_\mP - \overline{U}_\mP, \widehat{U}_\mP - \overline{U}_\mP)  = \frac12 \l[ \widehat{U}_\mP - \overline{U}_\mP \r]_{\mV}^2 .
\end{equation}

These ingredients together give us the following improved estimate.

\begin{Theorem}[improved rate: linear problems]\label{thm:rateGelfandTriples}
Assume that the energy $\Phi$ is given by \eqref{eq:QuadEnergy}, that the initial data satisfies $\fA u_0 \in \mH$, and that $f \in L^2_{\alpha}(0,T; [ \cdot ]_{\mV})$. Let $u$ be the energy solution to \eqref{eq:theGradFlow}, and denote by $\mP$ a partition of $[0,T]$ defined as in \eqref{eq:def-partition}. Denote by $\widehat{U}_\mP$ the solution to \eqref{eq:discrete-frac-GF} starting from $U_0 \in \mH$, such that $\fA U_0 \in \mH$. In this setting, we have that
\begin{equation}\label{eq:bound-L1alpha-mE-linear}
  \Vert \mE_\mP \Vert_{L^1_\alpha(0,T;\mH)} \leq C \tau^{2\alpha} \l( \| \fA U_0 \|^2 + \Vert f \Vert^2_{L^2_{\alpha}(0,T; [ \cdot ]_{\mV})} \r),
\end{equation}
where the constant $C$ depends only on $\alpha$. This, immediately, implies that
\[
E \le \Vert u_0 - U_0 \Vert + C \tau^{\alpha} \l( \| \fA U_0 \| + \Vert f \Vert_{L^2_{\alpha}(0,T; [ \cdot ]_{\mV})}  \r) + \Vert f - \overline{F}_\mP \|_{L^1_\alpha,\mH)},  
\]
so that if, in addition, we further have $f \in W^{\alpha,q}(0,T;\mH)$ for some $q > 1/\alpha$, then
\begin{equation}\label{eq:post-rate-linear}
E \le \Vert u_0 - U_0 \Vert + C \tau^{\alpha} \l( \| \fA U_0 \| + \Vert f \Vert_{L^2_{\alpha}(0,T; [ \cdot ]_{\mV})} + |f|_{W^{\alpha,q}(0,T;\mH)} \r)
\end{equation}
where the constant $C$ depends only on $\alpha, q$ and $T$.
\end{Theorem}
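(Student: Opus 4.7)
The plan is to leverage three facts that the linear, quadratic structure makes available: the collapse of the estimator to the clean form \eqref{eq:estforGelfand}, the additional seminorm regularity \eqref{eq:L2-alpha-regularity-linear} of $D_c^\alpha \widehat U_\mP$, and \Cref{thm:post-error}, which already converts a bound on $\|\mE_\mP\|_{L^1_\alpha(0,T;\mH)}$ into a bound on $E$. With these in hand the proof splits into (i) establishing \eqref{eq:bound-L1alpha-mE-linear} by a seminorm variant of \Cref{lemma:alpha-int-w}, and (ii) deducing \eqref{eq:post-rate-linear} from \Cref{thm:post-error} together with the embedding \eqref{eq:Lq-embed-Lpalpha} applied to $f - \overline F_\mP$.

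For step (i) I would first use \eqref{eq:estforGelfand} together with the identity $\overline U_\mP(t) = \widehat U_\mP(\tceil_\mP)$ to rewrite
\[
  2\mE_\mP(t) = [\widehat U_\mP(\tceil_\mP) - \widehat U_\mP(t)]_\mV^2.
\]
Inspecting the proof of \Cref{lemma:alpha-int-w}, only the Volterra representation \eqref{eq:Volterra-eq} together with scalar H\"older and triangle inequalities are used, so the conclusion remains valid with the Hilbert norm replaced by the bilinear seminorm $[\cdot]_\mV$. Applied with $p = 2$ in this seminorm it yields
\[
  \sup_{r\in[0,T]} \int_0^r (r-t)^{\alpha-1}\,\mE_\mP(t)\,\diff t \le C\tau^{2\alpha}\,[D_c^\alpha \widehat U_\mP]_{L^2_\alpha(0,T;[\cdot]_\mV)}^2.
\]
Combining this with the a priori bound \eqref{eq:L2-alpha-regularity-linear} and the seminorm counterpart of \Cref{lemma:pw-interp-L2alpha}, which controls $\overline F_\mP$ by $f$ in the $[\cdot]_\mV$ seminorm, then produces \eqref{eq:bound-L1alpha-mE-linear}.

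For step (ii), substituting \eqref{eq:bound-L1alpha-mE-linear} into \Cref{thm:post-error} and using $(a+b)^{1/2} \le a^{1/2} + b^{1/2}$ yields the intermediate bound on $E$ featuring $\|\fA U_0\|$, $\|f\|_{L^2_\alpha(0,T;[\cdot]_\mV)}$, and the residual $\|f - \overline F_\mP\|_{L^1_\alpha(0,T;\mH)}$. It only remains to control the latter to order $\tau^\alpha$: the embedding \eqref{eq:Lq-embed-Lpalpha} reduces the task to bounding $\|f - \overline F_\mP\|_{L^q(0,T;\mH)}$ for $q > 1/\alpha$, which in turn follows from a standard Jensen-plus-Slobodeckij calculation giving $\|f - \overline F_\mP\|_{L^q(0,T;\mH)} \le C\tau^\alpha |f|_{W^{\alpha,q}(0,T;\mH)}$.

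The primary obstacle is technical rather than conceptual: confirming that the proofs of \Cref{lemma:alpha-int-w} and \Cref{lemma:pw-interp-L2alpha} pass unchanged to the seminorm $[\cdot]_\mV$. Since those arguments use only H\"older, the triangle inequality for the ambient (semi)norm, and Muckenhoupt scalar estimates, none of which requires definiteness, the extension is routine; the only points worth stating carefully are that $\widehat U_\mP$ and $\overline F_\mP$ take values in $\mV$ whenever $U_0 \in D(\fA)$ and $f$ is $\mV$-valued, and that the seminorm version of the Volterra identity is inherited directly from the $\mV'$-valued equation $D_c^\alpha \widehat U_\mP + \fA \overline U_\mP = \overline F_\mP$.
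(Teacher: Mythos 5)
Your proposal is correct and follows essentially the same route as the paper: it collapses the estimator via \eqref{eq:estforGelfand}, applies the seminorm version of \Cref{lemma:alpha-int-w} with $p=2$ together with the uniform bound \eqref{eq:L2-alpha-regularity-linear} (and the seminorm analogue of \Cref{lemma:pw-interp-L2alpha}, which the paper also records just before the theorem), and then feeds the resulting estimate into \Cref{thm:post-error} and \eqref{eq:L1alpha-f-Fbar}. Your added remarks on why the seminorm extensions of the auxiliary lemmas are legitimate are a welcome elaboration of a point the paper treats as routine.
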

\begin{proof}
Owing to \Cref{thm:post-error} and equation \eqref{eq:L1alpha-f-Fbar}, the convergence rate \eqref{eq:post-rate-linear} follows directly from 
\eqref{eq:bound-L1alpha-mE-linear} in the same way as \Cref{thm:post-rate-W2inf}.
We only need to prove \eqref{eq:bound-L1alpha-mE-linear} and bound $\Vert \mE_\mP \Vert_{L^1_\alpha(0,T;\mH)}$. Using \eqref{eq:estforGelfand}, for every $r \in (0,T]$ we have
\begin{align*}
  2\int_0^r (r-t)^{\alpha-1}\mE_\mP(t) \diff t &= \int_0^r (r-t)^{\alpha-1} \l[ \widehat{U}_\mP - \overline{U}_\mP \r]_{\mV}^2 (t) \diff t.
\end{align*}

Now, we invoke \Cref{lemma:alpha-int-w} with $p = 2$ and the semi-norm $[ \cdot ]_{\mV}$ to obtain that
\[
\int_0^r (r-t)^{\alpha-1} \l[ \widehat{U}_\mP - \overline{U}_\mP \r]_{\mV}^2 (t) \diff t \leq C \tau^{2\alpha} \l[ D_c^\alpha \widehat{U}_\mP \r]_{L^2_{\alpha}(0,T; [ \cdot ]_{\mV})}^2.
\]
By \eqref{eq:L2-alpha-regularity-linear}, we have that $D_c^\alpha \widehat{U}_\mP \in L^2_{\alpha}(0,T; [ \cdot ]_{\mV})$ uniformly in $\mP$ and thus arrive at
\[
\int_0^r (r-t)^{\alpha-1} \l[ \widehat{U}_\mP - \overline{U}_\mP \r]_{\mV}^2 (t) \diff t \leq C \tau^{2\alpha} \l( \| \fA U_0 \|^2 + \Vert f \Vert^2_{L^2_{\alpha}(0,T; [ \cdot ]_{\mV})} \r).
\]
This implies the desired bound
\[
\Vert \mE_\mP \Vert_{L^1_\alpha(0,T;\mH)} \leq C \tau^{2\alpha} \l( \| \fA U_0 \|^2 + \Vert f \Vert^2_{L^2_{\alpha}(0,T; [ \cdot ]_{\mV})} \r)
\]
for $\Vert \mE_\mP \Vert_{L^1_\alpha(0,T;\mH)}$ and finishes the proof.
\end{proof}

\section{Lipschitz perturbations} \label{sec:LipschitzPerturbation}

In this section, inspired by the results of \cite{MR4040846}, we consider the analysis and approximation of a fractional gradient flow with a Lipschitz perturbation. Namely, we consider the following problem
\begin{equation}
\label{eq:LipschitzPerturbation}
  \begin{dcases}
      D_c^\alpha u(t) + \partial \Phi(u(t)) + \Psi(t,u(t)) \ni f(t), &t \in (0,T], \\
      u(0) = u_0.
  \end{dcases}
\end{equation}
We assume that the perturbation function $\Psi: (0,T] \times \mH \to \mH$ satisfies
\begin{enumerate}[1.]
  \item \label{it:Carat} (Carath\'eodory) For every $w \in \mH$ the mapping $t \mapsto \Psi(t,w)$ is strongly measurable on $(0,T)$ with values in $\mH$. Moreover, there exists $\mL >0$ such that for almost every $t \in (0,T)$ and every $w_1,w_2 \in \mH$ we have
  \[
    \| \Psi(t,w_1) - \Psi(t,w_2) \| \leq \mL \| w_1 - w_2 \|.
  \]

  \item \label{it:Integr} (Integrability) There is $w_0 \in L^2_\alpha(0,T;\mH)$ for which
  \[
    t \mapsto \Psi(t,w_0(t)) \in L^2_\alpha(0,T;\mH).
  \]

\end{enumerate}

We immediately comment that our assumptions can fit the case where $\Phi$ is merely $\lambda$--convex. Moreover, these assumptions also guarantee the existence of $\psi \in L^2_\alpha(0,T;\mR)$ for which
\[
  \| \Psi(t,w) \| \leq \psi(t) + \mL\|w \|, \quad \forall w \in \mH.
\]
Consequently $w \mapsto \Psi(\cdot, w(\cdot) )$ is Lipschitz continuous in $L^2_\alpha(0,T;\mH)$.

We introduce the notion of \emph{energy solution} of \eqref{eq:LipschitzPerturbation}.

\begin{Definition}[energy solution]\label{def:strongLipschitz}
A function $u \in L^2(0,T;\mH)$ is an energy solution to \eqref{eq:LipschitzPerturbation} if
\begin{enumerate}[(i)]
  \item (Initial condition)
  \[
    \lim_{t \downarrow0} \fint_0^t \| u(s) - u_0 \|^2 \diff s  = 0.
  \]
  
  \item (Regularity) $D_c^\alpha u \in L^2(0,T;\mH)$.
  
  \item (Evolution) For almost every $t \in (0,T)$ we have
  \[
    D_c^\alpha u(t) + \partial \Phi(u(t)) + \Psi(t,u(t)) \ni f(t).
  \]
\end{enumerate}
\end{Definition}

Evidently, an energy solution to \eqref{eq:LipschitzPerturbation} satisfies, for almost every $t \in (0,T)$ and all $w \in \mH$, the EVI 
\begin{equation}
\label{eq:sol-ineq-@t-LipPerturb}
  \l\langle   D_c^\alpha u(t), u(t) - w \r\rangle + \langle \Psi(t,u(t)), u(t) - w \rangle + \Phi(u(t)) - \Phi(w) \leq \langle f(t), u(t) - w \rangle .
\end{equation}

\subsection{Existence, uniqueness, and stability}
Our main result in this direction is the following.

\begin{Theorem}[well posedness]\label{thm:AllLipschitz}
Assume that the energy $\Phi$ is convex, l.s.c., and with nonempty effective domain. Assume the the mapping $\Psi$ satisfies conditions \ref{it:Carat} and \ref{it:Integr} stated above. Let $u_0 \in D(\Phi)$ and $f \in L^2_\alpha(0,T;\mH)$, then there is a unique energy solution to \eqref{eq:LipschitzPerturbation} in the sense of \Cref{def:strongLipschitz}. Moreover, we have that this solution satisfies
\[
  \| D_c^\alpha u \|_{L^2_\alpha(0,T;\mH)} \leq C,
\]
where the constant depends only on the problem data $\alpha$, $T$, $u_0$, $f$, $\Phi$, and $\Psi$.
\end{Theorem}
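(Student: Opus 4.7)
The natural approach is a global Picard iteration. For $v \in X := C([0,T];\mH)$, let $Sv \in X$ denote the unique energy solution, provided by \Cref{thm:exist-uniq-frac-GF}, of the pure fractional gradient flow
\begin{equation*}
  D_c^\alpha u + \partial \Phi(u) \ni f - \Psi(\cdot, v(\cdot)), \qquad u(0) = u_0.
\end{equation*}
From condition \ref{it:Carat} we get $\|\Psi(t,v(t))\| \le \|\Psi(t,w_0(t))\| + \mL\|v(t)-w_0(t)\|$; combined with \ref{it:Integr} and the embedding $C([0,T];\mH) \hookrightarrow L^2_\alpha(0,T;\mH)$, this shows $f - \Psi(\cdot,v(\cdot)) \in L^2_\alpha(0,T;\mH)$, so $S$ is well defined, and \eqref{eq:u-hold-cont} puts $Sv$ in $C^{0,\alpha/2}([0,T];\mH) \subset X$. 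Fixed points of $S$ are precisely the energy solutions of \eqref{eq:LipschitzPerturbation}.

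For $u_i = Sv_i$, both are strong solutions of their corresponding auxiliary pure gradient flow, so the monotonicity of $\partial\Phi$ and the Lipschitzness of $\Psi$ give, for a.e.\ $t$,
\begin{equation*}
  \l\langle D_c^\alpha(u_1-u_2)(t), u_1(t)-u_2(t)\r\rangle \le \mL\|v_1-v_2\|(t)\,\|u_1-u_2\|(t).
\end{equation*}
Combined with \eqref{eq:Caputo-ineq-norm-sq} and the trivial initial trace, invoking \Cref{lemma:frac-Gronwall-diff-ineq} on each $[0,t]$ with $\lambda=0$, $b=c=0$, $d = \mL\|v_1-v_2\|$ yields the pointwise Volterra bound $\|Sv_1 - Sv_2\|(t) \le \tfrac{2\mL}{\Gamma(\alpha)}\|v_1-v_2\|_{L^1_\alpha(0,t;\mR)}$. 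Iterating this inequality and using the Beta identity $\int_0^r(r-s)^{\alpha-1}s^{k\alpha}\diff s = \tfrac{\Gamma(\alpha)\Gamma(k\alpha+1)}{\Gamma((k+1)\alpha+1)} r^{(k+1)\alpha}$ gives, by induction on $n$,
\begin{equation*}
  \|S^n v_1 - S^n v_2\|_{L^\infty(0,T;\mH)} \le \frac{(2\mL T^\alpha)^n}{\Gamma(n\alpha+1)}\|v_1-v_2\|_{L^\infty(0,T;\mH)}.
\end{equation*}
Because $\sum_n (2\mL T^\alpha)^n/\Gamma(n\alpha+1) = E_\alpha(2\mL T^\alpha) < \infty$, Weissinger's generalization of the Banach contraction principle produces a unique fixed point $u \in X$, which is an energy solution of \eqref{eq:LipschitzPerturbation}. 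Uniqueness among \emph{all} energy solutions follows by running the same computation with arbitrary $u_1, u_2$ in place of $v_1,v_2$: this yields $D_c^\alpha\|u_1-u_2\|^2 \le 2\mL\|u_1-u_2\|^2$ with vanishing initial trace, and \Cref{lemma:frac-Gronwall-diff-ineq} applied with $\lambda = \mL$ and $b=c=d=0$ forces $u_1 \equiv u_2$.

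For the stability estimate, $u$ is itself the energy solution of the pure gradient flow with right-hand side $f_u := f - \Psi(\cdot,u(\cdot))$, so passing the bound of \Cref{lemma:bound-int-V} to the continuous limit, as in the proof of \Cref{thm:exist-uniq-frac-GF}, gives $\|D_c^\alpha u\|^2_{L^2_\alpha(0,T;\mH)} \le C(\|f_u\|^2_{L^2_\alpha(0,T;\mH)} + \Phi(u_0) - \Phi_{\inf})$. The main obstacle is that $\|f_u\|_{L^2_\alpha} \le \|f\|_{L^2_\alpha} + \|\Psi(\cdot,w_0)\|_{L^2_\alpha} + \mL\|u-w_0\|_{L^2_\alpha}$ still depends on $u$. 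I would close this self-consistency by choosing $w = u_0$ in the EVI \eqref{eq:sol-ineq-@t-LipPerturb} of \Cref{def:strongLipschitz}, using \eqref{eq:Caputo-ineq-norm-sq} and the Lipschitz splitting $\langle\Psi(\cdot,u), u-u_0\rangle \le \|\Psi(\cdot,u_0)\|\|u-u_0\| + \mL\|u-u_0\|^2$ to produce a fractional differential inequality of the form $D_c^\alpha\|u-u_0\|^2 \le 2\mL\|u-u_0\|^2 + \text{data}$, and then applying \Cref{lemma:frac-Gronwall-diff-ineq} with $\lambda=\mL$ to obtain an a priori bound $\|u-u_0\|_{L^\infty(0,T;\mH)} \le C\cdot\mathrm{data}\cdot E_\alpha(2\mL T^\alpha)$. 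Substituting this back into the estimate for $\|f_u\|_{L^2_\alpha}$ yields the desired data-only bound on $\|D_c^\alpha u\|_{L^2_\alpha(0,T;\mH)}$.
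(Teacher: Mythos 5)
Your proposal is correct and follows essentially the same route as the paper's first proof: the same Picard iteration $S$ built from the solution operator of the unperturbed flow, the same key Volterra estimate $\|Sv_1-Sv_2\|_{L^\infty(0,t;\mH)}\lesssim \mL\,\|v_1-v_2\|_{L^1_\alpha(0,t;\mH)}$, the same induction producing the Mittag--Leffler series $\sum_n (\mathrm{const}\cdot\mL T^\alpha)^n/\Gamma(n\alpha+1)$, and the same monotonicity-plus-Gr\"onwall uniqueness argument. The only place you go beyond the paper is the stability bound, where the paper merely says it ``follows from the iteration scheme''; your self-consistent closure via the EVI tested with $w=u_0$ and \Cref{lemma:frac-Gronwall-diff-ineq} with $\lambda=\mL$ is a sound way to supply those omitted details.
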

\begin{proof}
We begin by proving existence. We essentially follow the idea used for the classical ODEs. A similar argument was also used in the proof of \cite[Theorem 4.4]{LiLiuGeneralized-18}.

For $w \in L^2_\alpha(0,T;\mH)$ we denote by $ \fS(w) \in L^2_\alpha(0,T;\mH)$ the energy solution to
\[
  D_c^\alpha u(t) + \partial \Phi(u(t)) \ni f(t) - \Psi(t,w(t)), \ a.e.~t \in (0,T], \quad u(0) = u_0.
\]
Our assumptions and the results of \Cref{thm:exist-uniq-frac-GF} guarantee that this mapping is well defined, and moreover, $\fS(w) \in L^{\infty}(0,T;\mH)$. We want to show that there exists a fixed point $w$ such that $\fS(w) = w$. If $u_i = \fS(w_i)$ for $i=1,2$, then for almost every $t$ we have
\[
\frac12 D_c^\alpha \| u_1(t) - u_2(t) \|^2 \leq - \langle \Psi(t,w_1(t)) - \Psi(t,w_2(t)), u_1(t) -u_2(t) \rangle.
\]
This readily implies that
\[
\begin{aligned}
\| u_1(t) - u_2(t) \|^2 &\leq \frac\mL{\Gamma(\alpha)} \int_0^t (t-s)^{\alpha-1} \| w_1(s) - w_2(s) \| \| u_1 (s) - u_2(s) \| \diff s \\
&\leq \frac{\mL \; \Vert u_1 - u_2 \Vert_{L^{\infty}(0,t;\mH)}}{\Gamma(\alpha)} \int_0^t (t-s)^{\alpha-1} \| w_1(s) - w_2(s) \| \diff s
\end{aligned}
\]
which as a consequence yields that, for every $t \in [0,T]$,
\[
\| u_1 - u_2 \|_{L^{\infty}(0,t;\mH)} \leq \frac{\mL}{\Gamma(\alpha)} \Vert w_1 - w_2 \Vert_{L^1_{\alpha}(0,t;\mH)}.
\]
We claim that by induction, we can further obtain the following stability result
\begin{equation}\label{eq:Lip-stability-induction}
\Vert \fS^n(w_1) - \fS^n(w_2) \Vert_{L^{\infty}(0,t;\mH)}
\leq \frac{\mL^n \; t^{\alpha n}}{\Gamma(\alpha n+1)} \Vert w_1 - w_2 \Vert_{L^{\infty}(0,t;\mH)}
\end{equation}
for any $t \in [0,T]$ and positive integer $n$. In fact, for $n = 1$, we simply have
\[
\| u_1 - u_2 \|_{L^{\infty}(0,t;\mH)} \leq \frac{\mL}{\Gamma(\alpha)} \Vert w_1 - w_2 \Vert_{L^1_{\alpha}(0,t;\mH)} \le 
\frac{\mL \; t^{\alpha}}{\Gamma(\alpha+1)} \Vert w_1 - w_2 \Vert_{L^{\infty}(0,t;\mH)}.
\]
Furthermore, if \eqref{eq:Lip-stability-induction} holds for $n = k$, then for $n = k+1$
\[
\begin{aligned}
\| \fS^{k+1}(w_1) - \fS^{k+1}(w_2) \|_{L^{\infty}(0,t;\mH)} &\leq \frac{\mL}{\Gamma(\alpha)} \Vert \fS^{k}(w_1) - \fS^{k}(w_2) \Vert_{L^1_{\alpha}(0,t;\mH)} \\
& \leq \frac{\mL}{\Gamma(\alpha)} \sup_{0 \le r \le t} \int_0^r (r-s)^{\alpha - 1} \frac{\mL^k \; s^{\alpha k}}{\Gamma(\alpha k+1)} \Vert w_1 - w_2 \Vert_{L^{\infty}(0,t;\mH)} \diff s \\
&= \frac{\mL^{k+1} t^{\alpha (k+1)}}{\Gamma(\alpha (k+1)+1)} \Vert w_1 - w_2 \Vert_{L^{\infty}(0,t;\mH)},
\end{aligned}
\]
which proves \eqref{eq:Lip-stability-induction}. Now consider $w_0 \in L^2_{\alpha} (0,T;\mH)$ and the sequence of functions defined via $w_n = \fS^n (w_0)$. It is easy to see that, for $n \ge 1$, we have $w_n \in L^{\infty}(0,T;\mH)$, and $\sum_{n = 1}^{\infty} \Vert w_n - w_{n+1} \Vert_{L^{\infty}(0,T;\mH)}$ converges because
\[
\sum_{n = 0}^{\infty} \frac{\mL^n \; t^{\alpha n}}{\Gamma(\alpha n+1)} = E_{\alpha} (\mL t^{\alpha}).
\]
This shows that $w_n \to u$ in $L^{\infty}(0,T;\mH)$ for some $u$. Since $w_{n+1} = \fS(w_n)$, it follows immediately that $u = \fS(u)$. This proves the existence of solutions.

As for uniqueness, assume that we have two solutions $u_1$ and $u_2$, for almost every $t$, we have
\[
\frac12 D_c^\alpha \| u_1(t) - u_2(t) \|^2 \leq - \langle \Psi(t,u_1(t)) - \Psi(t,u_2(t)), u_1(t) -u_2(t) \rangle \leq
\mL \; \| u_1(t) - u_2(t) \|^2.
\]
Combining with the fact that $u_1(0) = u_2(0) = u_0$, one obtains that $\| u_1(t) - u_2(t) \|^2 = 0$ for almost every $t$, which proves uniqueness.

Finally, the estimate on the Caputo derivative trivially follows from the iteration scheme. We skip the details.
\end{proof}

For diversity in our arguments, we present an alternative proof. The arguments here are inspired by those of \cite[Theorem 5.1]{MR4040846}.

\begin{proof}[Alternative proof of \Cref{thm:AllLipschitz}]
Let us, for $\mu > \mL^{1/\alpha}$, define
\[
  \Vert w \Vert_\mu^2 = \sup_{t \in [0,T]} e^{-\mu t} \int_0^t(t-s)^{\alpha-1} \| w(s) \|^2 \diff s,
\]
which by the obvious inequalities $e^{-\mu T} \leq e^{-\mu t} \leq 1$, defines an equivalent norm in $L^2_\alpha(0,T;\mH)$.

Let $\fS: L^2_\alpha(0,T;\mH) \to L^2_\alpha(0,T;\mH)$ be as before. As shown, if $u_i = \fS(w_i)$ for $i=1,2$, then for every $t$ we have
\[
  \| u_1(t) - u_2(t) \|^2 \leq \frac\mL{\Gamma(\alpha)} \int_0^t (t-s)^{\alpha-1} \| w_1(s) - w_2(s) \| \| u_1 (s) - u_2(s) \| \diff s,
\]
which as a consequence yields that, for every $r \in [0,T]$,
\[
  e^{-\mu r} \int_0^r(r-t)^{\alpha-1} \| u_1(r) - u_2(r) \|^2 \diff r \leq \frac{\mL e^{-\mu r}}{\Gamma(\alpha)} \mathrm{I}(r),
\]
where
\[
  \mathrm{I}(r) = \int_0^r (r-t)^{\alpha-1} \int_0^t (t-s)^{\alpha-1} \| w_1(s) - w_2(s) \| \| u_1 (s) - u_2(s) \| \diff s \diff t.
\]
Obvious manipulations then yield
\[
  \mathrm{I}(r) \leq \| u_1 - u_2 \|_\mu \| w_1 - w_2 \|_\mu \int_0^r (r-t)^{\alpha-1} e^{\mu t} \diff t,
\]
which implies
\[
  e^{-\mu r} \int_0^r(r-t)^{\alpha-1} \| u_1(r) - u_2(r) \|^2 \diff r \leq \frac{\mL }{\Gamma(\alpha)} \int_0^r (r-t)^{\alpha-1} e^{-\mu (r-t)} \diff t \leq \frac{\mL}{\mu^\alpha} < 1,
\]
so that $\fS$ is a contraction with respect to the norm $\Vert \cdot \Vert_\mu$.
We conclude then by invoking the contraction mapping principle. This unique fixed point, evidently, is a energy solution in the sense of \Cref{def:strongLipschitz}.

Uniqueness and stability follow as before.
\end{proof}

\subsection{Discretization}

Let us now present the numerical scheme for problem \eqref{eq:LipschitzPerturbation}. We follow the previous notations and conventions regarding discretization so that, for any partition $\mP$ of $[0,T]$ defined as in \eqref{eq:def-partition}, we can also consider the discrete solution defined recursively via
\begin{equation}\label{eq:discrete-frac-GF-LipPerturb}
F_n -\l( D_\mP^{\alpha} \bU \r)_n - \Psi_n(U_n) \in \partial \Phi(U_n),
\end{equation}
where $F_n$ is defined in \eqref{eq:average-Fn} and $\Psi_n : \mH \to \mH$ is defined by
\[
  \Psi_n(w) = \fint_{t_{n-1}}^{t_n} \Psi(t, w) \diff t.
\]
Clearly, for every $n$, $\Psi_n$ is Lipschitz continuous with Lipschitz constant $\mL$. Using the definition of $D^{\alpha}_\mP$ in \eqref{eq:discrete-Caputo} and $\bK_{\mP,nn}^{-1} = (\bK_{\mP,nn})^{-1} = \Gamma(\alpha+1) \tau_n^{-\alpha}$, we can rewrite \eqref{eq:discrete-frac-GF-LipPerturb} as
\[
\Gamma(\alpha+1) \tau_n^{-\alpha} U_n + \Psi_n(U_n) + \partial \Phi(U_n) \ni F_n - \sum_{i=0}^{n-1} \bK_{\mP,ni}^{-1} U_i.
\]
Hence the discrete scheme can be recursively well-defined provided $\mL \tau^{\alpha} < \Gamma(\alpha+1)$. For this reason, moving forward, we will implicitly operate under this assumption.

It is possible to show that the discrete solutions in \eqref{eq:discrete-frac-GF-LipPerturb} satisfy
\begin{equation}\label{eq:DboundUhatLipPerturb}
\| D_c^\alpha \widehat{U}_\mP \|_{L^2_\alpha(0,T;\mH)} \leq C,
\end{equation}
with a constant that depends on problem data but is independent of the partition $\mP$. To see this, we follow the arguments of either proof of \Cref{thm:AllLipschitz}, and realize that while the operator $\fS$ may depend on $\mP$, the estimates that we obtain do not.

\subsection{Error estimates}
Let us now show how to derive error estimates for the problem with Lipschitz perturbation \eqref{eq:LipschitzPerturbation}. We recall that the energy solution $u$ to this problem satisfies \eqref{eq:sol-ineq-@t-LipPerturb}. In addition, for simplicity, we will operate under the assumption that the perturbation does not depend explicitly on time, i.e., $\Psi(t,w) = \Psi(w)$ for all $w \in \mH$. The general case only lengthens the discussion but brings nothing substantive to it, as the additional terms that appear can be controlled via arguments used to control terms of the form 
\[
  f(t) -\overline{F}_\mP(t).
\]

Similar to the discussion before, we define the error estimator
\begin{equation}\label{eq:post-error-mEt-LipPerturb}
  \mE_{\mP,\mL}(t) = \mE_\mP(t) + \langle \Psi( \overline{U}_\mP(t)), \widehat{U}_\mP(t) - \overline{U}_\mP(t) \rangle ,
%
\end{equation}
which, as before, is nonnegative. In addition, for any $w \in \mH$ we have
\begin{equation*}
\begin{aligned}
& \langle D_c^{\alpha} \widehat{U}_\mP(t) + \Psi(\widehat{U}_\mP(t)) - f(t) , \widehat{U}_\mP(t) - w \rangle + \Phi(\widehat{U}_\mP(t)) - \Phi(w) \\
&= \mE_{\mP,\mL}(t) + \langle \overline{F}_\mP(t) - \Psi(\overline{U}_\mP(t)) - D_c^{\alpha} \widehat{U}_\mP(t), w - \overline{U}_\mP(t) \rangle
+ \Phi(\overline{U}_\mP(t)) - \Phi(w) \\
& \quad + \langle \Psi( \overline{U}_\mP(t)) - \Psi( \widehat{U}_\mP(t)) + f(t) - \overline{F}_\mP(t),  w- \widehat{U}_\mP(t) \rangle \\
&\le \mE_{\mP,\mL}(t) + \langle \Psi( \overline{U}_\mP(t)) - \Psi( \widehat{U}_\mP(t)) + f(t) - \overline{F}_\mP(t),  w- \widehat{U}_\mP(t) \rangle - \sigma(\overline{U}_\mP(t); w).
\end{aligned}
\end{equation*}
Setting $w = u(t)$ in the inequality above and setting $w = \widehat{U}(t)$ in \eqref{eq:sol-ineq-@t-LipPerturb} leads to
\begin{multline}\label{eq:discrete-error-ineq-LipPerturb}
\l\langle D_c^{\alpha} \l( \widehat{U}_\mP-u\r)(t), \widehat{U}_\mP(t) - u(t) \r\rangle + \sigma(\overline{U}_\mP(t); u(t)) + \sigma(u(t); \widehat{U}_\mP(t))
\le \\ \mE_{\mP,\mL}(t) + \langle \Psi( \overline{U}_\mP(t)) - \Psi( \widehat{U}_\mP(t)) + f(t) - \overline{F}_\mP(t),  u(t) - \widehat{U}_\mP(t) \rangle + \langle \Psi( \widehat{U}_\mP(t)) - \Psi( u(t)), u(t) - \widehat{U}_\mP(t) \rangle
\end{multline}
for almost every $t \in (0,T)$. This implies the following error estimates.

\begin{Theorem}[a posteriori: Lipschitz perturbations]\label{thm:post-error-LipPerturb}
	Let $u$ be the unique energy solution of \eqref{eq:LipschitzPerturbation}. Let $\mP$ be a partition of $[0,T]$ defined as in \eqref{eq:def-partition} and let $\bU \in \mH^N$ be the discrete solution given by \eqref{eq:discrete-frac-GF-LipPerturb} starting from $U_0 \in D(\Phi)$. Let $E$ and $\mE_{\mP,\mL}$ be defined in \eqref{eq:def-Error} and \eqref{eq:post-error-mEt-LipPerturb}, respectively, The following a posteriori error estimate holds
	\begin{multline}\label{eq:post-error-LipPerturb}
	E \le \l( \Vert u_0 - U_0\Vert^2 + \frac{2}{\Gamma(\alpha)} \Vert \mE_{\mP,\mL} \Vert_{L^1_{\alpha}(0,T;\mH) } \r)^{1/2} (E_{\alpha}(2\mL T^{\alpha}))^{1/2} \\
	+ \frac{2}{\Gamma(\alpha)} \l( \Vert f - \overline{F}_\mP \Vert_{L^1_{\alpha}(0,T;\mH)} + \mL \Vert \overline{U}_\mP - \widehat{U}_\mP \Vert_{L^1_{\alpha}(0,T;\mH)} \r) E_{\alpha}(2\mL T^{\alpha}).
	\end{multline}
\end{Theorem}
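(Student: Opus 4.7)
The plan is to start from the pointwise inequality \eqref{eq:discrete-error-ineq-LipPerturb}, convert it into a differential inequality for $\|u(t)-\widehat{U}_\mP(t)\|^2$ via \eqref{eq:Caputo-ineq-norm-sq}, and then invoke the fractional Gr\"onwall Lemma~\ref{lemma:frac-Gronwall-diff-ineq}. The Lipschitz perturbation is exactly what supplies a nontrivial $\lambda>0$ in that lemma, which is why the Mittag-Leffler factor $E_{\alpha}(2\mL T^{\alpha})$ appears in \eqref{eq:post-error-LipPerturb}.

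More concretely, I would first apply \eqref{eq:Caputo-ineq-norm-sq} on the left hand side of \eqref{eq:discrete-error-ineq-LipPerturb} to obtain
\[
\frac12 D_c^\alpha \|u-\widehat{U}_\mP\|^2(t) + \sigma(\overline{U}_\mP(t);u(t)) + \sigma(u(t);\widehat{U}_\mP(t)) \leq \mE_{\mP,\mL}(t) + R(t),
\]
where $R(t)$ collects the two remaining inner products on the right hand side of \eqref{eq:discrete-error-ineq-LipPerturb}. For the perturbation term $\langle \Psi(\widehat{U}_\mP(t))-\Psi(u(t)), u(t)-\widehat{U}_\mP(t)\rangle$ I would use the Lipschitz property to bound it above by $\mL \|u(t)-\widehat{U}_\mP(t)\|^2$, and for the remaining term I would use Cauchy--Schwarz together with $\|\Psi(\overline{U}_\mP(t))-\Psi(\widehat{U}_\mP(t))\|\le \mL\|\overline{U}_\mP(t)-\widehat{U}_\mP(t)\|$.

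Setting $a(t)=\|u(t)-\widehat{U}_\mP(t)\|$, this yields a differential inequality of exactly the form \eqref{eq:frac-Gronwall-diff-ineq-assump} with $\lambda = \mL$, with
\[
b(t) = 2[\sigma(\overline{U}_\mP(t);u(t)) + \sigma(u(t);\widehat{U}_\mP(t))], \quad c(t) = 2\mE_{\mP,\mL}(t), \quad d(t) = \|f(t)-\overline{F}_\mP(t)\| + \mL\|\overline{U}_\mP(t)-\widehat{U}_\mP(t)\|.
\]
With these choices, Lemma~\ref{lemma:frac-Gronwall-diff-ineq} gives a bound on $\sup_t a^2(t) + \frac{1}{\Gamma(\alpha)}\|b\|_{L^1_\alpha}$, and noting that $\frac{1}{\Gamma(\alpha)}\|b\|_{L^1_\alpha(0,T;\mR)} = E_\sigma^2(T)$ (after taking the supremum over $t\in[0,T]$), this is precisely the quantity $E^2$ from \eqref{eq:def-Error}. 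Plugging the definitions of $\widetilde C(T)$ and $\widetilde D(T)$ back in, using $a^2(0)=\|u_0-U_0\|^2$, and splitting $\|d\|_{L^1_\alpha}$ via the triangle inequality yields \eqref{eq:post-error-LipPerturb} directly.

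I do not expect any serious obstacle here; the argument is essentially a reworking of the proof of Theorem~\ref{thm:post-error} with the added twist that the Lipschitz term forces $\lambda=\mL>0$ (producing the Mittag-Leffler factor) and contributes an extra term $\mL\|\overline{U}_\mP-\widehat{U}_\mP\|$ inside $d$. The only mildly delicate point is keeping track of the $E_\alpha(2\mL T^\alpha)$ factor and verifying that the square root on the ``$c$'' contribution and the linear appearance on the ``$d$'' contribution match the form stated in \eqref{eq:post-error-LipPerturb}; this is a direct bookkeeping consequence of the statement of Lemma~\ref{lemma:frac-Gronwall-diff-ineq}.
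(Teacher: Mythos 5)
Your proposal is correct and follows essentially the same route as the paper: starting from \eqref{eq:discrete-error-ineq-LipPerturb}, bounding the perturbation terms via the Lipschitz property, and invoking Lemma~\ref{lemma:frac-Gronwall-diff-ineq} with $\lambda=\mL$, $c=2\mE_{\mP,\mL}$, and $d(t)=\Vert f(t)-\overline{F}_\mP(t)\Vert+\mL\Vert\overline{U}_\mP(t)-\widehat{U}_\mP(t)\Vert$. The only (cosmetic) difference is that you retain the coercivity terms in $b(t)$, which is in fact the more complete bookkeeping needed to control the full error $E$ including $E_\sigma$, whereas the paper sets $b=0$ and omits these terms for brevity.
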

\begin{proof}
We argue as in the proof of \eqref{thm:post-error}. To make formulas shorter we omit the coercivity terms.
From \eqref{eq:Caputo-ineq-norm-sq} and \eqref{eq:discrete-error-ineq-LipPerturb} we infer
\begin{multline}
\frac12 D_c^{\alpha} \Vert \widehat{U}_\mP-u \Vert^2(t) \le \l\langle D_c^{\alpha} \l( \widehat{U}_\mP-u\r)(t), \widehat{U}_\mP(t) - u(t) \r\rangle \\
\le \mE_{\mP,\mL}(t) + \langle \Psi(t, \overline{U}_\mP(t)) - \Psi(t, \widehat{U}_\mP(t)) + f(t) - \overline{F}_\mP(t),  u(t) - \widehat{U}_\mP(t) \rangle 
+ \mL \Vert \widehat{U}_\mP(t) - u(t) \Vert^2(t) \\
\le \mE_{\mP,\mL}(t) + \l(  \mL \Vert \overline{U}_\mP(t) - \widehat{U}_\mP(t) \Vert + \Vert f(t) - \overline{F}_\mP(t) \Vert \r) \Vert \widehat{U}_\mP(t) - u(t) \Vert 
+ \mL \Vert \widehat{U}_\mP(t) - u(t) \Vert^2
.
\end{multline}
Then the error estimate \eqref{eq:post-error-LipPerturb} follows from \Cref{lemma:frac-Gronwall-diff-ineq} with
\[
\lambda = \mL, \; a(t)= \Vert (\widehat{U}_\mP-u)(t) \Vert, \; 
b = 0, \; c= 2 \mE_{\mP,\mL}(t), \; d(t)= \mL \Vert \overline{U}_\mP(t) - \widehat{U}_\mP(t) \Vert + \Vert (f - \overline{F}_\mP)(t) \Vert. \qedhere
\]
\end{proof}

We also comment here that by \Cref{lemma:alpha-int-w}
\[
\Vert \overline{U}_\mP - \widehat{U}_\mP \Vert_{L^1_{\alpha}(0,T;\mH)}
\le C \tau^{\alpha} \Vert D_c^{\alpha} \widehat{U}_\mP \Vert_{L^1_{\alpha}(0,T;\mH)}
\le \frac{CT^{\alpha/2}}{\alpha^{1/2}} \tau^{\alpha} \Vert D_c^{\alpha} \widehat{U}_\mP \Vert_{L^2_{\alpha}(0,T;\mH)},
\]
where the constant $C$ only depends on $\alpha$. In addition, the norm on the right hand side is bounded independently of the partition $\mP$; see \eqref{eq:DboundUhatLipPerturb}.
Hence the convergence rates proved in Theorems \ref{thm:post-rate-fL2alpha} and \ref{thm:post-rate-W2inf} also hold for problems with a Lipschitz perturbation. Since the proofs are almost identical, we only state the theorems below without proofs.

\begin{Theorem}[convergence rate: Lipschitz perturbations]\label{thm:post-rate-fL2alpha-LipPerturb}
	Let $u$ be the energy solution of \eqref{eq:LipschitzPerturbation}. Let $\mP$ be a partition of $[0,T]$ defined as in \eqref{eq:def-partition} and $\bU \in \mH^N$ be the discrete solution given by \eqref{eq:discrete-frac-GF-LipPerturb} starting from $U_0 \in D(\Phi)$. Let $E$ be defined in \eqref{eq:def-Error}. Then we have
	\[
	E \le \Vert u_0 - U_0 \Vert (E_{\alpha}(2\mL T^{\alpha}))^{1/2} + C \tau^{\alpha/2} \l( \Vert f \Vert_{L^2_{\alpha}(0, T; \mH)} + \Vert D_c^{\alpha} \widehat{U}_\mP \Vert_{L^2_{\alpha}(0, T; \mH)} \r),
	\]
	where the constant $C$ depends only on $\alpha, \mL$ and $T$, but not on $\mP$.
\end{Theorem}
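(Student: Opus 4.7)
The plan is to adapt the proof of \Cref{thm:post-rate-fL2alpha} so as to accommodate the Lipschitz perturbation; the essential new ingredient is that the fractional Gronwall step must now be run with $\lambda > 0$, producing the Mittag-Leffler prefactor. First I would define
\[
G(t) = \frac{1}{\Gamma(\alpha)} \int_0^t (t-s)^{\alpha-1} \bigl(f(s) - \overline{F}_\mP(s)\bigr) \diff s,
\]
so that $D_c^\alpha G = f - \overline{F}_\mP$ and, by \Cref{lemma:alpha-int-f-Fbar} with $p=2$, $\Vert G \Vert_{L^\infty(0,T;\mH)} \le C\tau^{\alpha/2}\Vert f \Vert_{L^2_\alpha(0,T;\mH)}$. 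Setting $e = u - \widehat{U}_\mP$ and $\tilde e = e - G$, I would rewrite \eqref{eq:discrete-error-ineq-LipPerturb} in terms of $\tilde e$; the $\langle f - \overline{F}_\mP, e\rangle$ contribution is cancelled against the $\langle D_c^\alpha G, \tilde e\rangle$ produced by the expansion of $\langle D_c^\alpha \tilde e, \tilde e\rangle$, leaving only the estimator $\mE_{\mP,\mL}$, the two Lipschitz residuals of $\Psi$, and inner products of controlled quantities with $G$.

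I would then apply \eqref{eq:Caputo-ineq-norm-sq} to $\tilde e$, together with the Lipschitz bounds $\langle \Psi(\widehat U_\mP) - \Psi(u), u - \widehat U_\mP\rangle \le \mL\Vert e\Vert^2 = \mL\Vert \tilde e + G\Vert^2$ and $\langle \Psi(\overline U_\mP) - \Psi(\widehat U_\mP), u - \widehat U_\mP\rangle \le \mL\Vert\overline U_\mP - \widehat U_\mP\Vert \cdot \Vert e\Vert$, to reach a fractional differential inequality of the form
\[
D_c^\alpha \Vert \tilde e\Vert^2(t) \le 2\mL\Vert \tilde e\Vert^2(t) + c(t) + 2d(t)\Vert \tilde e(t)\Vert,
\]
where $c$ absorbs $2\mE_{\mP,\mL}$ together with $\mathrm{O}(\Vert G\Vert^2)$ and mixed terms, while $d \lesssim \mL\Vert\overline U_\mP - \widehat U_\mP\Vert + \mL\Vert G\Vert + \Vert f - \overline{F}_\mP\Vert$. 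Invoking \Cref{lemma:frac-Gronwall-diff-ineq} with $\lambda = \mL$ then delivers the $(E_\alpha(2\mL T^\alpha))^{1/2}$ factor and reduces the task to bounding $\Vert \mE_{\mP,\mL}\Vert_{L^1_\alpha}^{1/2}$ and $\Vert d\Vert_{L^1_\alpha}$.

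To finish, I would split $\mE_{\mP,\mL} = \mE_\mP + \langle \Psi(\overline U_\mP), \widehat U_\mP - \overline U_\mP\rangle$ and bound both pieces at rate $\tau^\alpha$. The first follows from the perturbed analogue of \Cref{thm:bound-L1alpha-mE}, which goes through verbatim after replacing $F_n$ by $F_n - \Psi_n(U_n)$ in \eqref{eq:V-stability-proof1} and using \eqref{eq:DboundUhatLipPerturb} for the resulting $L^2_\alpha$ bound; the second follows from Cauchy--Schwarz in $L^2_\alpha$ combined with \Cref{lemma:alpha-int-w}. The remaining $L^1_\alpha$ bounds come from \Cref{lemma:alpha-int-w} applied to $\overline U_\mP - \widehat U_\mP$ (giving the $\tau^\alpha$ rate already recorded after \Cref{thm:post-error-LipPerturb}) and from the $L^\infty$ bound on $G$. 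The main obstacle, and the reason this proof is nontrivial despite the similarity to \Cref{thm:post-rate-fL2alpha}, is the bookkeeping needed to guarantee that the final constants depend only on $\alpha$, $\mL$, and $T$ as claimed; every Lipschitz term must be tracked through the Gronwall amplification, and it is precisely the partition-independent bound \eqref{eq:DboundUhatLipPerturb} on $\Vert D_c^\alpha \widehat U_\mP\Vert_{L^2_\alpha}$ that allows this quantity to appear as a free parameter on the right-hand side without reintroducing any dependence on $\mP$.
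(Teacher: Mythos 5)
Your proposal follows the route the paper intends: the paper does not actually write out a proof of this theorem, stating only that it is ``almost identical'' to that of \Cref{thm:post-rate-fL2alpha}, and your reconstruction --- introduce $G$, shift to $\tilde e = e - G$, run \Cref{lemma:frac-Gronwall-diff-ineq} with $\lambda = \mL$ to produce the Mittag--Leffler factor, and bound $\Vert \mE_{\mP,\mL}\Vert_{L^1_\alpha(0,T;\mH)}$ and $\Vert \overline{U}_\mP - \widehat{U}_\mP\Vert_{L^1_\alpha(0,T;\mH)}$ at rate $\tau^{\alpha}$ using the partition-independent bound \eqref{eq:DboundUhatLipPerturb} --- is exactly that adaptation, including the correct treatment of the perturbed estimator and of the $\mE_{\mP,2}$ bound with $F_n$ replaced by $F_n - \Psi_n(U_n)$. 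One bookkeeping slip: after correctly cancelling $\langle f - \overline{F}_\mP, e\rangle$ against the $\langle D_c^\alpha G, \tilde e\rangle$ term, you nevertheless list $\Vert f - \overline{F}_\mP\Vert$ as a contribution to $d$. If that term genuinely sat in $d$, the argument would fail, since $\Vert f - \overline{F}_\mP\Vert_{L^1_\alpha(0,T;\mH)}$ carries no rate for $f \in L^2_\alpha(0,T;\mH)$ --- avoiding it is the whole reason for introducing $G$. Since the cancellation leaves no such term, it should simply be dropped from $d$; the data $f - \overline{F}_\mP$ then enters only through $\langle D_c^\alpha \tilde e, G\rangle$, controlled via $\Vert G\Vert_{L^2_\alpha(0,T;\mH)} \le C\tau^{\alpha}\Vert f\Vert_{L^2_\alpha(0,T;\mH)}$ from \eqref{eq:double-alpha-int-f-Fbar}, and through $\Vert G\Vert_{L^\infty(0,T;\mH)} \le C\tau^{\alpha/2}\Vert f\Vert_{L^2_\alpha(0,T;\mH)}$ when converting the bound on $\tilde e$ back into one on $u - \widehat{U}_\mP$. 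With that correction the proof is complete.
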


\begin{Theorem}[improved rate: smooth energies and Lipschitz perturbations]\label{thm:post-rate-W2inf-LipPerturb}
	Assume that the energy $\Phi$ satisfies \eqref{eq:ineq-smooth-Phi-beta}. Let $u$ be the energy solution to \eqref{eq:LipschitzPerturbation}, and denote by $\mP$ a partition of $[0,T]$ defined as in \eqref{eq:def-partition}. Denote by $\widehat{U}_\mP$ the solution of \eqref{eq:discrete-frac-GF-LipPerturb} starting from $U_0 \in D(\Phi)$. In this setting, if there is $q>1/\alpha$ for which $f \in W^{\alpha(\beta+1)/2,q}(0, T; \mH)$ then the error $E$, defined in \eqref{eq:def-Error}, satisfies
	\begin{multline}
	E \le \Vert u_0 - U_0 \Vert (E_{\alpha}(2\mL T^{\alpha}))^{1/2}
	+ C_1 \tau^{\alpha} \Vert D_c^{\alpha} \widehat{U}_\mP \Vert_{L^2_{\alpha}(0,T;\mH)} \\
	+ C_2 \tau^{\alpha(\beta+1)/2} \l[ \l( \Vert f \Vert_{L^2_{\alpha}(0, T; \mH)} + \Vert D_c^{\alpha} \widehat{U}_\mP \Vert_{L^2_{\alpha}(0, T; \mH)} \r)^{(\beta+1)/2} + |f|_{W^{\alpha(\beta+1)/2,q}(0, T; \mH)} \r],
	\end{multline}
	where the constants $C_1$ and $C_2$ depend only on  $\alpha, \beta, q, \mL$, $T$, and the problem data, but are independent of $\mP$.
\end{Theorem}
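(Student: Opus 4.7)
The plan is to mimic the proof of \Cref{thm:post-rate-W2inf}, starting from the a posteriori estimate \eqref{eq:post-error-LipPerturb} of \Cref{thm:post-error-LipPerturb}. Three quantities on its right hand side must be controlled in terms of $\tau$: the norm $\Vert \mE_{\mP,\mL} \Vert_{L^1_\alpha(0,T;\mH)}$, the data approximation error $\Vert f - \overline{F}_\mP \Vert_{L^1_\alpha(0,T;\mH)}$, and the interpolation jump $\mL \Vert \overline{U}_\mP - \widehat{U}_\mP \Vert_{L^1_\alpha(0,T;\mH)}$. The Mittag--Leffler prefactors $E_\alpha(2\mL T^\alpha)$ are $\mP$--independent constants that can be absorbed into $C_1$ and $C_2$.

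The decisive observation is that the estimator $\mE_{\mP,\mL}$ admits a Bregman--type rewriting. Setting $\xi(t) = \overline{F}_\mP(t) - D_c^\alpha \widehat{U}_\mP(t) - \Psi(\overline{U}_\mP(t))$, we have $\xi(t) \in \partial\Phi(\overline{U}_\mP(t))$ by \eqref{eq:discrete-frac-GF-LipPerturb}, and a direct manipulation of \eqref{eq:post-error-mEt-LipPerturb} shows that
\[
  \mE_{\mP,\mL}(t) = \Phi(\widehat{U}_\mP(t)) - \Phi(\overline{U}_\mP(t)) - \langle \xi(t), \widehat{U}_\mP(t) - \overline{U}_\mP(t) \rangle.
\]
The smoothness hypothesis \eqref{eq:ineq-smooth-Phi-beta} therefore yields the pointwise bound $\mE_{\mP,\mL}(t) \le C_\beta \Vert \widehat{U}_\mP(t) - \overline{U}_\mP(t) \Vert^{1+\beta}$, precisely the opening inequality in the proof of \Cref{thm:bound-L1alpha-mE-2}. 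Applying \Cref{lemma:alpha-int-w} with $p = 1+\beta$, then the embedding $L^2_\alpha \hookrightarrow L^{1+\beta}_\alpha$ used at the end of that proof, and finally the uniform bound \eqref{eq:DboundUhatLipPerturb}, gives
\[
  \Vert \mE_{\mP,\mL} \Vert_{L^1_\alpha(0,T;\mH)} \le C\, \tau^{\alpha(1+\beta)}\, T^{\alpha(1-\beta)/2}\, \Vert D_c^\alpha \widehat{U}_\mP \Vert^{1+\beta}_{L^2_\alpha(0,T;\mH)}.
\]

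For the two remaining terms I would simply reuse existing tools: \eqref{eq:L1alpha-f-Fbar} controls $\Vert f - \overline{F}_\mP \Vert_{L^1_\alpha(0,T;\mH)}$ by a constant times $\tau^{\alpha(1+\beta)/2} |f|_{W^{\alpha(1+\beta)/2,q}(0,T;\mH)}$, while the display immediately following \Cref{thm:post-error-LipPerturb} (which is \Cref{lemma:alpha-int-w} with $p=1$ combined with a H\"older-type embedding) yields
\[
  \mL \Vert \overline{U}_\mP - \widehat{U}_\mP \Vert_{L^1_\alpha(0,T;\mH)} \le \frac{C \mL T^{\alpha/2}}{\alpha^{1/2}}\, \tau^\alpha\, \Vert D_c^\alpha \widehat{U}_\mP \Vert_{L^2_\alpha(0,T;\mH)}.
\]
This last bound is the sole source of the $C_1 \tau^\alpha \Vert D_c^\alpha \widehat{U}_\mP \Vert_{L^2_\alpha}$ summand in the target inequality; taking the square root of the bound on $\Vert \mE_{\mP,\mL} \Vert_{L^1_\alpha}$ produces the second, smoother $\tau^{\alpha(1+\beta)/2}$ summand. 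Plugging the three bounds into \eqref{eq:post-error-LipPerturb} and using $\sqrt{a+b} \le \sqrt{a} + \sqrt{b}$ to peel off the initial-data contribution concludes the proof.

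The only substantive hurdle is recognizing that the particular choice of $\mE_{\mP,\mL}$ in \eqref{eq:post-error-mEt-LipPerturb} is designed precisely so that the Bregman remainder identity above holds; with that identity, the perturbation $\Psi$ is absorbed into $\xi(t)$, and the same $\tau^{\alpha(1+\beta)}$ estimate obtained in \Cref{thm:bound-L1alpha-mE-2} survives without loss. Once that is in hand, the assembly is routine and no new fractional calculus identity is required.
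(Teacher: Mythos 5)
Your proposal is correct and follows essentially the route the paper intends: the paper omits this proof precisely because it is the argument of Theorems~\ref{thm:bound-L1alpha-mE-2} and \ref{thm:post-rate-W2inf} repeated with the subgradient $\xi(t)=\overline{F}_\mP(t)-D_c^\alpha\widehat U_\mP(t)-\Psi(\overline U_\mP(t))$ absorbing the perturbation, combined with \Cref{thm:post-error-LipPerturb}, the jump bound stated immediately after it, and the uniform estimate \eqref{eq:DboundUhatLipPerturb}. The Bregman rewriting of $\mE_{\mP,\mL}$ you identify is exactly the reason the perturbation term was built into \eqref{eq:post-error-mEt-LipPerturb}, and your assembly of the three bounds reproduces the stated inequality.
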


Finally we consider the setting of \Cref{sub:LinearProblems} with a Lipschitz perturbation. Similar to \eqref{eq:DboundUhatLipPerturb}, we can show that $\Vert D_c^{\alpha} \widehat{U}_\mP \Vert_{L^2_{\alpha}(0, T; [ \cdot ]_{\mV})}$ is bounded uniformly with respect to the partition $\mP$. For this reason, an improved error estimate analogous to 
\Cref{thm:rateGelfandTriples} can be proved in this case.

\begin{Theorem}[improved rate: quadratic energies and Lipschitz perturbations]\label{thm:rateGelfandTriples-LipPerturb}
	Assume that the energy $\Phi$ is given by \eqref{eq:QuadEnergy}, that the initial data satisfies $\fA u_0 \in \mH$, and that $f \in L^2_{\alpha}(0,T; [ \cdot ]_{\mV})$. Let $u$ be the energy solution to \eqref{eq:LipschitzPerturbation}, and denote by $\mP$ a partition of $[0,T]$ defined as in \eqref{eq:def-partition}. Denote by $\widehat{U}_\mP$ the solution to \eqref{eq:discrete-frac-GF-LipPerturb} starting from $U_0 \in \mH$, such that $\fA U_0 \in \mH$. In this setting, we have that
	\begin{multline}\label{eq:post-rate-linear-LipPerturb}
	E \le \Vert u_0 - U_0 \Vert (E_{\alpha}(2\mL T^{\alpha}))^{1/2} + C \Vert f - \overline{F} \Vert_{L^1_{\alpha}(0,T;\mH)} \\
	+ C \tau^{\alpha} \l( \| \fA U_0 \| + \Vert f \Vert_{L^2_{\alpha}(0,T; [ \cdot ]_{\mV})} + \Vert D_c^{\alpha} \widehat{U}_\mP \Vert_{L^2_{\alpha}(0, T; [ \cdot ]_{\mV})} + \Vert D_c^{\alpha} \widehat{U}_\mP \Vert_{L^2_{\alpha}(0, T; \mH)} \r)
	\end{multline}
	where the constant $C$ depends only on $\alpha, \mL$ and $T$.
\end{Theorem}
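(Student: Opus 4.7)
The plan is to mirror the proof of \Cref{thm:rateGelfandTriples}, exploiting the fact that, in the quadratic setting, the subdifferential is single-valued and the coercivity modulus reduces exactly to the $\mV$-semi-norm. I will then treat the extra Lipschitz term by observing a clean algebraic cancellation in the estimator $\mE_{\mP,\mL}$, and finally insert the resulting bounds into the a posteriori estimate of \Cref{thm:post-error-LipPerturb}.

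The core computation is an explicit formula for $\mE_{\mP,\mL}$. Since $\Phi$ is given by \eqref{eq:QuadEnergy} the discrete scheme \eqref{eq:discrete-frac-GF-LipPerturb} becomes $D_c^\alpha \widehat{U}_\mP(t) + \fA \overline{U}_\mP(t) + \Psi(\overline{U}_\mP(t)) = \overline{F}_\mP(t)$ for a.e.~$t$, so that
\[
\overline{F}_\mP(t) - D_c^\alpha \widehat{U}_\mP(t) = \fA \overline{U}_\mP(t) + \Psi(\overline{U}_\mP(t)).
\]
Substituting this into \eqref{eq:post-error-mEt-LipPerturb} and using the identity $\frac12 \fa(\widehat{U}_\mP,\widehat{U}_\mP) - \frac12 \fa(\overline{U}_\mP,\overline{U}_\mP) - \fa(\overline{U}_\mP, \widehat{U}_\mP - \overline{U}_\mP) = \frac12 [\widehat{U}_\mP - \overline{U}_\mP]_\mV^2$, the perturbation term cancels exactly and we are left with $\mE_{\mP,\mL}(t) = \tfrac12 [\widehat{U}_\mP(t) - \overline{U}_\mP(t)]_\mV^2$, precisely as in \eqref{eq:estforGelfand}. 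This is the linchpin of the argument, as it shows that adding a Lipschitz perturbation does \emph{not} degrade the local estimator beyond what the linear part already contributes.

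Given this identity, I would bound $\Vert \mE_{\mP,\mL} \Vert_{L^1_\alpha(0,T;\mH)}$ by invoking \Cref{lemma:alpha-int-w} with $p=2$ applied to the seminorm $[\cdot]_\mV$, obtaining
\[
\Vert \mE_{\mP,\mL} \Vert_{L^1_\alpha(0,T;\mH)} \le C \tau^{2\alpha} \l[ D_c^\alpha \widehat{U}_\mP \r]_{L^2_\alpha(0,T;[\cdot]_\mV)}^2,
\]
where the $L^2_\alpha(0,T;[\cdot]_\mV)$-norm on the right is uniformly controlled in $\mP$ by the data, via the perturbed analogue of \eqref{eq:L2-alpha-regularity-linear} (stated in the paragraph preceding the theorem). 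Likewise, using \Cref{lemma:alpha-int-w} with $p=1$ in the Hilbert norm, together with the uniform bound \eqref{eq:DboundUhatLipPerturb} for the Caputo derivative, one obtains $\Vert \overline{U}_\mP - \widehat{U}_\mP \Vert_{L^1_\alpha(0,T;\mH)} \le C T^{\alpha/2}\tau^\alpha \Vert D_c^\alpha \widehat{U}_\mP \Vert_{L^2_\alpha(0,T;\mH)}$, as already noted in the text. Inserting these two estimates into the a posteriori bound \eqref{eq:post-error-LipPerturb} yields the desired estimate \eqref{eq:post-rate-linear-LipPerturb}, with the $\Vert f - \overline{F}_\mP \Vert_{L^1_\alpha(0,T;\mH)}$ term kept as is.

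The main obstacle is the justification of the uniform bound on $\Vert D_c^\alpha \widehat{U}_\mP \Vert_{L^2_\alpha(0,T;[\cdot]_\mV)}$: the formal computation of \Cref{sub:LinearProblems} has to accommodate the extra term $\fa(\Psi(\overline{U}_\mP), D_c^\alpha \widehat{U}_\mP)$ arising when one differentiates $\tfrac12\Vert \fA \widehat{U}_\mP\Vert^2$, and this must be absorbed using a Young inequality and a fractional Gr\"onwall argument (\Cref{lemma:frac-Gronwall-diff-ineq}) against the Lipschitz growth $\Vert \Psi(\overline{U}_\mP)\Vert \le \psi + \mL\Vert \overline{U}_\mP\Vert$, together with the $L^\infty$ bound on $\widehat{U}_\mP$ furnished by the analogue of \Cref{lemma:bound-int-V}. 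Once this uniform regularity is in hand, the remainder of the argument is essentially a bookkeeping exercise and a direct adaptation of the proof of \Cref{thm:rateGelfandTriples} to the Lipschitz-perturbed setting.
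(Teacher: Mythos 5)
Your proposal is correct and follows exactly the route the paper intends (the paper in fact omits this proof, merely asserting that the argument of \Cref{thm:rateGelfandTriples} carries over): the exact cancellation of the $\Psi$-term in $\mE_{\mP,\mL}$, leaving $\mE_{\mP,\mL}=\tfrac12[\widehat{U}_\mP-\overline{U}_\mP]_\mV^2$, is precisely why the definition \eqref{eq:post-error-mEt-LipPerturb} was set up that way, and the remaining steps (\Cref{lemma:alpha-int-w} with $p=2$ in $[\cdot]_\mV$ and $p=1$ in $\mH$, then \eqref{eq:post-error-LipPerturb}) match the paper's scheme. Note only that, since the stated bound keeps $\Vert D_c^\alpha\widehat{U}_\mP\Vert_{L^2_\alpha(0,T;[\cdot]_\mV)}$ explicitly on the right-hand side, the uniform-in-$\mP$ control of that quantity you flag as the ``main obstacle'' is not actually needed to prove the theorem as written, only to make the resulting rate meaningful.
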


\section{Numerical illustrations} \label{sec:experiments}

In this section we present some simple numerical examples aimed at illustrating, and extending, our theory. All the computations were done with an in-house code that was written in MATLAB$^\copyright$.

\subsection{Practical a posteriori estimators} \label{sub:estimators}
We begin by commenting that, unlike the a posteriori estimators for the classical gradient flow proposed in \cite{MR1737503}, our a posteriori estimator $\mE_\mP$ is not constant on each subinterval of our partition $\mP$; see \eqref{eq:post-error-mEt}. Here we mention more computationally friendly alternatives, and their properties.

First, we define an estimator that is piecewise constant in time via
\begin{equation*}
 \mD_\mP(t) = \max_{s \in \l[ \lfloor t \rfloor_\mP, \tceil_\mP \r]} \l\{
  \langle D_c^{\alpha} \widehat{U}_\mP(s) - \overline{F}(s), \widehat{U}_\mP(s) - \overline{U}_\mP(s) \rangle + \Phi(\widehat{U}_\mP(s)) - \Phi(\overline{U}_\mP(s)) \r\}
\end{equation*}
This is clearly an upper bound for $\mE_\mP(t)$.

One may also consider the simpler indicator
\begin{equation}
\label{eq:defDP}
  \wt{\mE}_{\mP,n} = \langle \l( D_{\mP}^{\alpha} \bU \r)_n - F_n, U_{n-1} - U_n \rangle + \Phi(U_{n-1}) - \Phi(U_n), \quad n= 1, \ldots, N.
\end{equation}
Although it is not always true that $\mE_\mP(t) \le \wt{\mE}_{\mP,n(t)}$, this indicator is convenient to use in practice and gives reasonable results. In fact, this is the one that we implemented in the numerical examples of \Cref{sub:adaptivity} below.

\subsection{A linear one dimensional example}

As a first simple example we consider the one dimensonal fractional ODE
\begin{equation}\label{eq:fracODE}
D_c^{\alpha} u + \lambda u = 0, \quad u(0) = 1,
\end{equation}
with $\lambda > 0$. From \eqref{eq:GronwallIsExp} we have
$
u(t) = E_{\alpha}(-\lambda t^{\alpha}).
$
This, obviously, fits our framework with $\mH = \mR$, and $\Phi(w)= \frac\lambda2 |w|^2$. Notice also that all the assumptions of \Cref{sub:SmoothEnergy} are also satisfied with $\beta =1$. Thus, we expect a rate of order $\mO(\tau^\alpha)$ when using \eqref{eq:discrete-frac-GF} to approximate the solution over a uniform partition with time step $\tau$.

\begin{table}{\tiny
  \begin{tabular}{ccc}
    $\alpha = 0.3$ & $\alpha = 0.5$ & $\alpha = 0.7$\\
    {
      \begin{tabular}{|c|c|c|} 
        \hline
        $\tau$ & $|u(1) - U_N|$ & rate \\ 
        \hline
        5.000$e$-02 & 4.563$e$-04 & --- \\ 
        2.500$e$-02 & 3.702$e$-04 & 0.301417 \\ 
        1.250$e$-02 & 3.005$e$-04 & 0.300979 \\ 
        6.250$e$-03 & 2.440$e$-04 & 0.300664 \\ 
        3.125$e$-03 & 1.981$e$-04 & 0.300445 \\ 
        1.563$e$-03 & 1.609$e$-04 & 0.300297 \\ 
        7.813$e$-04 & 1.307$e$-04 & 0.300199 \\ 
        3.906$e$-04 & 1.061$e$-04 & 0.300133 \\ 
        1.953$e$-04 & 8.619$e$-05 & 0.300090 \\ 
        9.766$e$-05 & 7.001$e$-05 & 0.300062 \\ 
        4.883$e$-05 & 5.686$e$-05 & 0.300043 \\ 
        2.441$e$-05 & 4.619$e$-05 & 0.300030 \\ 
        \hline
      \end{tabular}
    }&
    {
      \begin{tabular}{|c|c|c|} 
        \hline
        $\tau$ & $|u(1) - U_N|$ & rate \\ 
        \hline
        5.000\texttt{e}-02 & 2.829\texttt{e}-04 & --- \\ 
        2.500\texttt{e}-02 & 1.996\texttt{e}-04 & 0.503051 \\ 
        1.250\texttt{e}-02 & 1.409\texttt{e}-04 & 0.502309 \\ 
        6.250\texttt{e}-03 & 9.954\texttt{e}-05 & 0.501710 \\ 
        3.125\texttt{e}-03 & 7.032\texttt{e}-05 & 0.501248 \\ 
        1.563\texttt{e}-03 & 4.969\texttt{e}-05 & 0.500902 \\ 
        7.813\texttt{e}-04 & 3.512\texttt{e}-05 & 0.500648 \\ 
        3.906\texttt{e}-04 & 2.483\texttt{e}-05 & 0.500463 \\ 
        1.953\texttt{e}-04 & 1.755\texttt{e}-05 & 0.500330 \\ 
        9.766\texttt{e}-05 & 1.241\texttt{e}-05 & 0.500235 \\ 
        4.883\texttt{e}-05 & 8.773\texttt{e}-06 & 0.500166 \\ 
        2.441\texttt{e}-05 & 6.203\texttt{e}-06 & 0.500118 \\ 
        \hline
      \end{tabular}
    }&
    {
      \begin{tabular}{|c|c|c|} 
        \hline
        $\tau$ & $|u(1) - U_N|$ & rate \\ 
        \hline
        5.000\texttt{e}-02 & 1.235\texttt{e}-04 & --- \\ 
        2.500\texttt{e}-02 & 7.571\texttt{e}-05 & 0.705417 \\ 
        1.250\texttt{e}-02 & 4.646\texttt{e}-05 & 0.704620 \\ 
        6.250\texttt{e}-03 & 2.852\texttt{e}-05 & 0.703871 \\ 
        3.125\texttt{e}-03 & 1.752\texttt{e}-05 & 0.703207 \\ 
        1.563\texttt{e}-03 & 1.076\texttt{e}-05 & 0.702638 \\ 
        7.813\texttt{e}-04 & 6.616\texttt{e}-06 & 0.702160 \\ 
        3.906\texttt{e}-04 & 4.068\texttt{e}-06 & 0.701764 \\ 
        1.953\texttt{e}-04 & 2.502\texttt{e}-06 & 0.701437 \\ 
        9.766\texttt{e}-05 & 1.539\texttt{e}-06 & 0.701170 \\ 
        4.883\texttt{e}-05 & 9.465\texttt{e}-07 & 0.700952 \\ 
        2.441\texttt{e}-05 & 5.823\texttt{e}-07 & 0.700774 \\ 
        \hline
      \end{tabular}
    }
  \end{tabular}}
  \caption{Convergence rate for the approximation of \eqref{eq:fracODE} using scheme \eqref{eq:discrete-frac-GF} over a uniform partition of size $\tau$. As predicted by \Cref{sub:SmoothEnergy}, the rate is $\mO(\tau^\alpha)$.}
\label{table:linear1d}
\end{table}

Table~\ref{table:linear1d} shows, for $\lambda = 0.001$ and different values of $\alpha$, the difference $|u(1) - U_N|$ which we use as a proxy for the error $E_\mH$ of \eqref{eq:def-Error}. The rate of convergence is verified.

\subsection{Adaptive time stepping}\label{sub:adaptivity}

We now illustrate the use of the a posteriori error estimator $\mE_\mP$ given in \eqref{eq:post-error-mEt} to drive the selection of the size of the time step. For a given tolerance $\ve$ we, at every step, choose the local time step $\tau_n$ to guarantee that
\[
\frac{2T^{\alpha}}{\Gamma(\alpha+1)} \wt{\mE}_{\mP,n} \le \ve^2,
\]
where $\wt{\mE}_{\mP,n}$ is given in \eqref{eq:defDP}. Then, by \Cref{thm:post-error}, we expect that
\[
  \| u - \widehat{U}_\mP \|_{L^\infty(0,T;\mH)} \leq \ve,
\]
provided the approximation error $\| f - \overline{F}_\mP \|_{L^1_\alpha(0,T;\mH)}$ is negligible. Notice that to drive the process we are using the simpler estimator $\wt{\mE}_{\mP}$; see the discussion in \Cref{sub:estimators}.

\begin{figure}
	\includegraphics[scale=0.5]{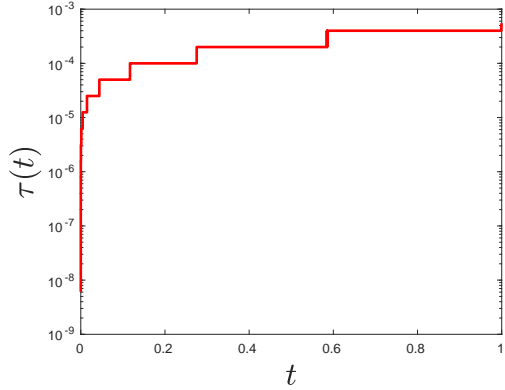}
	\caption{Adaptive time stepping for problem \eqref{eq:fracODE} with $T = 1, \lambda = 1, \alpha = \tfrac12$ is used to achieve a tolerance of $\ve = 10^{-4}$. The adaptive solver uses $8,747$ time intervals with minimum time step $6.1035 \times 10^{-9}$ and max time step $5.4969 \times 10^{-4}$.}\label{fig:adaptive-time-step}
	
\end{figure}

We consider the linear problem \eqref{eq:fracODE} with $\lambda = 1$ and $\alpha = \tfrac12$ and set $\ve = 10^{-4}$. Figure~\ref{fig:adaptive-time-step} shows the local time step $\tau(t)$ for $t \in [0,T]$. As expected, due to the weak singularity of $u$ at $t=0$ the time step must be rather small for small times. For larger times, however, the solution is smoother and larger local time steps can be taken. With this process we obtain that
\[
  \| u - \widehat{U}_\mP \|_{L^\infty(0,T;\mH)} \approx 1.805993 \times 10^{-5},
\]
and this requires $N = 8,747$ time subintervals. For comparison, choosing a uniform time step of $\tau = 6.1035 \times 10^{-6}$ we require $N=163,840$ time intervals. This achieves an error of $\ve =4.944 \times 10^{-5}$, which is slightly higher than that obtained with our adaptive procedure. This clearly shows the advantages and possibilities for this strategy.

%
%
%

\subsection{Some nonlinear one dimensional examples}\label{sub:nl1d}

We now, while staying in one dimension, depart from the linear theory and illustrate the performance of our method in a series of nonlinear examples of increasing difficulty. In all the examples we set $\mH = \mR$ and $f=0$. Thus, we will only specify the energy and initial condition in each case.

In all the examples, since the exact solution is not known, we compare the solutions at different time levels. Specifically, we let $\tau_k = 2^{-k}$ and upon denoting by $U(N_k)$ the approximate solution at $T=1$ computed with step size $\tau_k$, we compute
\[
\mathrm{rate}_k  =  \log_2( |U(N_{k-1}) - U(N_{k-2})| ) - \log_2( |U(N_k) - U(N_{k-1})| ).
\]

\newcounter{exxx}
\setcounter{exxx}{1}

\subsubsection{Example \theexxx}

We let $p \in (1,2)$ and set
\[
  \Phi(w) = \frac\lambda{p}|w|^p, \qquad u_0 = \frac1{10}.
\]
Notice that this example fits the framework of \Cref{sub:SmoothEnergy} with $\beta = p-1$. However, as mentioned there, it is not expected that the solution reaches zero in finite time, so we do not expect a reduced rate.

To compute the discrete solution, at every time step, we need to solve a nonlinear equation of the form
\[
  U_n + c \; U_n |U_n|^{p-2} - W_n = 0, \qquad c = \dfrac{\lambda \tau^{\alpha}}{\Gamma(\alpha+1)}
\]
where $W_n$ is known. We found the solution to this problem using Newton's method, which works for small values of $\tau$.

\begin{table}
  \begin{center}
    \begin{tabular}{|c|c|c|}  
      \hline
      $\tau$ & $|U(N_k) - u(N_{k-1})|$ & rate \\ 
      \hline
      7.813\texttt{e}-04 & --- & --- \\ 
      3.906\texttt{e}-04 & 1.256\texttt{e}-06 & --- \\ 
      1.953\texttt{e}-04 & 6.276\texttt{e}-07 & 1.001307 \\ 
      9.766\texttt{e}-05 & 3.135\texttt{e}-07 & 1.001298 \\ 
      4.883\texttt{e}-05 & 1.568\texttt{e}-07 & 0.999272 \\ 
      2.441\texttt{e}-05 & 7.827\texttt{e}-08 & 1.002774 \\ 
      1.221\texttt{e}-05 & 3.924\texttt{e}-08 & 0.996178 \\ 
      \hline
    \end{tabular}    
  \end{center}
\caption{Convergence rate for  $\alpha = 0.5$, $p = 1.5$, and $\lambda = 1$ in Example~\theexxx~of \Cref{sub:nl1d}. The rate seems to be of order $\mO(\tau)$, which is better than what the theory predicts.}
\label{table:pEnergy}
\end{table}

Table~\ref{table:pEnergy} shows the results for $\alpha = 0.5$, $p = 1.5$, and $\lambda = 1$. These clearly indicate a rate of $\mO(\tau)$.

\addtocounter{exxx}{1}
\subsubsection{Example \theexxx.}
We set 
\[
  \Phi(w) = \lambda( u\ln u - u), \qquad u_0 = 0.
\]
with $\lambda >0$, so that $D(\Phi)=[0, \infty)$. Notice that $u_0 \in D(\Phi)\setminus D(\partial\Phi)$.

At each time step one needs to solve a problem of the form
\[
  U_n + c \ln(U_n) - W_n=0, \qquad  c = \dfrac{\lambda \tau^{\alpha}}{\Gamma(\alpha+1)},
\]
and $W_n$ is known. This is solved with a Newton scheme, which runs into difficulties at the initial time step. We go around this issue by using as initial value for the iteration a very small positive number.

\begin{table}
  \begin{center}
    \begin{tabular}{|c|c|c|} 
      \hline
      $\tau$ & $|U(N_k) - u(N_{k-1})|$ & rate \\ 
      \hline
      5.000\texttt{e}-02 & --- & --- \\ 
      2.500\texttt{e}-02 & 6.761\texttt{e}-07 & --- \\ 
      1.250\texttt{e}-02 & 5.330\texttt{e}-07 & 0.342991 \\ 
      6.250\texttt{e}-03 & 4.088\texttt{e}-07 & 0.382916 \\ 
      3.125\texttt{e}-03 & 3.077\texttt{e}-07 & 0.409583 \\ 
      1.563\texttt{e}-03 & 2.290\texttt{e}-07 & 0.426328 \\ 
      7.813\texttt{e}-04 & 1.689\texttt{e}-07 & 0.439295 \\ 
      3.906\texttt{e}-04 & 1.238\texttt{e}-07 & 0.447818 \\ 
      1.953\texttt{e}-04 & 9.039\texttt{e}-08 & 0.453936 \\ 
      9.766\texttt{e}-05 & 6.579\texttt{e}-08 & 0.458399 \\ 
      4.883\texttt{e}-05 & 4.777\texttt{e}-08 & 0.461709 \\ 
      2.441\texttt{e}-05 & 3.463\texttt{e}-08 & 0.464209 \\ 
      1.221\texttt{e}-05 & 2.507\texttt{e}-08 & 0.466136 \\ 
      6.104\texttt{e}-06 & 1.813\texttt{e}-08 & 0.467656 \\ 
      3.052\texttt{e}-06 & 1.310\texttt{e}-08 & 0.468883 \\ 
      \hline
    \end{tabular}    
  \end{center}
\caption{Convergence rate for  $\alpha = 0.5$ and $\lambda = 10^{-6}$ in Example~\theexxx~of \Cref{sub:nl1d}. The rate seems to be of order $\mO(\tau^\alpha)$, which is better than what the theory predicts.}
\label{table:logarithm}
\end{table}

Table~\ref{table:logarithm} presents the results for $\alpha = 0.5$ and $\lambda = 10^{-6}$. These indicate that the convergence rate is $\mO(\tau^{\alpha})$. Similar results for other choices of $\alpha$ and $\lambda$ were obtained.

\addtocounter{exxx}{1}
\subsubsection{Example \theexxx.}

\begin{table}
  \begin{center}
    \begin{tabular}{|c|c|c|} 
      \hline
      $\tau$ & $|U(N_k) - u(N_{k-1})|$ & rate \\ 
      \hline
      5.000\texttt{e}-02 & --- & --- \\ 
      2.500\texttt{e}-02 & 3.370\texttt{e}-07 & --- \\ 
      1.250\texttt{e}-02 & 1.881\texttt{e}-07 & 0.840996 \\ 
      6.250\texttt{e}-03 & 1.033\texttt{e}-07 & 0.864944 \\ 
      3.125\texttt{e}-03 & 5.607\texttt{e}-08 & 0.881286 \\ 
      1.563\texttt{e}-03 & 3.019\texttt{e}-08 & 0.893168 \\ 
      7.813\texttt{e}-04 & 1.615\texttt{e}-08 & 0.902281 \\ 
      3.906\texttt{e}-04 & 8.599\texttt{e}-09 & 0.909574 \\ 
      1.953\texttt{e}-04 & 4.559\texttt{e}-09 & 0.915606 \\ 
      9.766\texttt{e}-05 & 2.408\texttt{e}-09 & 0.920723 \\ 
      4.883\texttt{e}-05 & 1.268\texttt{e}-09 & 0.925149 \\ 
      2.441\texttt{e}-05 & 6.660\texttt{e}-10 & 0.929039 \\ 
      1.221\texttt{e}-05 & 3.490\texttt{e}-10 & 0.932497 \\ 
      6.104\texttt{e}-06 & 1.825\texttt{e}-10 & 0.935603 \\ 
      \hline
    \end{tabular}    
  \end{center}
\caption{Convergence rate for  $\alpha = 0.5$ and $\lambda = 10^{-6}$ in Example~\theexxx~of \Cref{sub:nl1d}. The rate seems to be of order $\mO(\tau)$, which is better than what the theory predicts.}
\label{table:fancy}
\end{table}

As a final example we consider
\[
  \Phi(w) = -\lambda\sqrt{1 - (1-u)_+^2}, \qquad u_0 = 0.
\]
Notice that $D(\Phi) = [0,\infty)$ and, once again, $u_0 \in D(\Phi)\setminus D(\partial\Phi)$. Table~\ref{table:fancy} presents the results for $\alpha = 0.5$ and $\lambda = 10^{-6}$. We, again, seem to get a rate that is better than what the theory predicts.

\section*{Acknowledgement}
AJS is partially supported by NSF grant DMS-1720213.

\bibliographystyle{amsplain}
\bibliography{FracGF}
\end{document}